\title{Limits of multivariate elliptic hypergeometric biorthogonal functions}
\author{Fokko J. van de Bult and Eric M. Rains}
\newtheorem{theorem}{Theorem}[section]
\newtheorem{definition}[theorem]{Definition}
\newtheorem{proposition}[theorem]{Proposition} 
\newtheorem{corollary}[theorem]{Corollary}
\newtheorem{lemma}[theorem]{Lemma}
\newcommand{\binon}[2]{\genfrac{\langle}{\rangle}{0pt}{}{#1}{#2}}
\begin{document}

\begin{abstract}
In this article we extend the results of \cite{vdBRuniv} to the multivariate setting. In \cite{vdBRuniv}, we determined which families of biorthogonal functions arise as limits  from the elliptic hypergeometric biorthogonal  functions from \cite{S} when $p\to 0$. Here we show that the classification of the possible limits of the $BC_n$ type multivariate biorthogonal functions from \cite{Rainstrafo} and \cite{RainsBCn} is identical to the univariate classification. That is, for each univariate limit family there exists a multivariate extension, and in particular we obtain multivariate versions for all elements of the $q$-Askey scheme. For the Askey-Wilson polynomials these are the Koornwinder polynomials, and the multivariate versions of the Pastro polynomials form a two-parameter family which include the Macdonald polynomials.
\end{abstract}

\maketitle

In \cite{vdBRuniv} the authors considered the limits as $p\to 0$ of the univariate elliptic hypergeometric biorthogonal functions 
studied by Spiridonov and Zhedanov in \cite{S}, \cite{SZ1} and \cite{SZ2}. It turned out there are 38 distinct families of  biorthogonal basic hypergeometric rational functions which appear as limits. Moreover, the degeneration scheme included as a subset the $q$-Askey scheme of orthogonal basic hypergeometric polynomials. 

In \cite{RainsBCn} and \cite{Rainstrafo} the second author considered a $BC_n$-symmetric multivariate extension of these biorthogonal functions and established their basic properties. These properties include generalizations of all Macdonald conjectures: Explicit formulas are given for the squared norms, evaluation of the biorthogonal functions at suitable geometric sequences, and evaluation symmetry between the spectral and geometric parameters. 

In this paper we want to determine what the possible (basic hypergeometric) limits of these multivariate biorthogonal functions are as $p\to 0$. These limits of course depend on how the parameters (other than $p$ itself) behave as $p\to 0$, so to make the question more explicit we impose conditions on how they depend on $p$, similar as in \cite{vdBRuniv}. Indeed, if we write
$\mathcal{R}_{\lambda}^{(n)}(z_i;t_r;u_r;q,t;p)$ for the biorthogonal functions (as in \cite{Rainstrafo}), where we have $n$ variables $z_i$, four parameters $t_r$ and two parameters $u_r$, we set $z_i \to z_ip^{\zeta}$, $t_r \to t_r p^{\alpha_r}$ and $u_r \to u_r p^{\gamma_r}$, while forcing $q$ and $t$ to be independent of $p$. It should be noted that all variables $z_i$ exhibit identical behavior as $p\to 0$ (i.e., there is just one $\zeta$). The limit we arrive at will now obviously depend on the values of $\zeta$, $\alpha_r$ and $\gamma_r$. 

As in \cite{vdBRuniv} there are essentially two conditions on the limits which determine whether the limits themselves also form a family of biorthogonal functions. First of all we must ensure that the limits are still $z$-dependent (and form a linearly independent set of functions), and secondly we must be able to take the limit in the squared norm formula. The major part of this paper consists in showing that the condition that the limits are $z$-dependent is equivalent to the condition that the related univariate biorthogonal functions are $z$-dependent. It is straightforward to see that the condition that the squared norm formula has a proper limit is identical to that condition in the univariate case. As a corollary we obtain that the degeneration scheme of the multivariate biorthogonal functions is identical to the degeneration scheme in the univariate case derived in \cite{vdBRuniv}. In particular, for each of the limits obtained in \cite{vdBRuniv} there exists a multivariate analogue. We  give explicit measures for which these families are biorthogonal in \cite{vdBRmeas}.

One might expect that there also exist interesting limits if one allows $t$ to vary with $p$. We refrain from considering those cases here, as our choice of fixing $t$ ensures that the combinatorics of the multivariate limits equals that of the univariate limits as discussed in the previous paragraph. 

It comes as no surprise that the Koornwinder polynomials appear as the multivariate analogue of the Askey-Wilson polynomials, and we obtain a multivariate analogue of all other families of polynomials in the $q$-Askey scheme. All of those families of  multivariate orthogonal polynomials can be obtained as limits of the Koornwinder polynomials. Another interesting special case arises as the multivariate analogues of the Pastro polynomials. These are biorthogonal polynomials, but specializing them in a proper way reduces them to the original Macdonald polynomials. 

In \cite{RainsBCn} and \cite{Rainstrafo} two different approaches are used to define the multivariate biorthogonal functions. In \cite{Rainstrafo} they are defined using difference and integral raising operators, in \cite{RainsBCn} they are expanded in terms of interpolation functions, which themselves are defined by vanishing conditions. Neither of these two methods is well-suited for taking limits directly. Thus the approach we take in this article is to define the interpolation functions using the branching rule. This gives us an explicit expression of the biorthogonal functions as a finite sum of products of theta functions. We want to take the limit in this expression by taking the limit of each summand, however this will sometimes lead to unwanted cancellation (where the sum of the limits of the summands vanishes). We will show that the symmetries of the biorthogonal function allow us to find at least one expression for the biorthogonal function in which this does not happen, thus giving us an explicit method of finding the limits. 

The article is organized as follows. We start with a section on notation, followed by a section giving the definition of the interpolation functions and the associated generalized binomial coefficients. In Section \ref{secbiortho} we give the definition of the biorthogonal functions and its most important properties. Section \ref{secliminterpolation} describes how we can obtain limits of the interpolation functions. The next section proves the main result about the limits of biorthogonal functions: They exist as linearly independent functions of $z$ only if their univariate counterparts do. In Section \ref{secpastro} we consider the specific limit to multivariate Pastro polynomials and their special case, the Macdonald polynomials. The appendix gives explicit expressions for the limits of the interpolation functions.

\section{Notation}\label{secnot}

\subsection{Univariate $q$-symbols}

We say a function $f(x;z)$ is written multiplicatively in $x$ if the presence of multiple parameters at the place of $x$ indicates a product; and if $\pm$ symbols in those parameters also indicate a product over all possible combinations of $+$ and $-$ signs. For example
\begin{align*}
f(x_1, x_2, \ldots, x_n;z) &= \prod_{r=1}^n f(x_i;z), \\ f(x^{\pm 1} y^{\pm 1};z) &= 
f(xy;z)f(x/y;z)f(y/x;z)f(1/xy;z).
\end{align*}

Now we define the $q$-symbols and their elliptic analogues as in \cite{GR}. Let $0<|q|,|p|<1$ and set
\begin{align*}
(x;q) &= \prod_{r=0}^\infty (1-xq^r), &
(x;q)_m &= \prod_{r=0}^{m-1} (1-xq^r), & (x;p,q) &= \prod_{r,s\geq 0} (1-xp^rq^s) \\
\theta(x;p) &= (x,p/x;p),&
\theta(x;q;p)_m &= \prod_{r=0}^{m-1} \theta(xq^r;p), & 
\Gamma(x;p,q) & = \prod_{i,j\geq 0} \frac{1-p^{i+1}q^{j+1}/x}{1-p^i q^j x}.
\end{align*}
All these functions are written multiplicatively in $x$. Note that the terminating product $(x;q)_m$ is also defined if $|q|\geq 1$. Likewise $\theta(x;q;p)_m$ is defined for all $q$, though we must still insist on $|p|<1$.

%

%
%

\subsection{Partitions}
We use the notations of \cite{RainsBCn} for partitions, which is the notation from Macdonald's book \cite{Macdonald} with some additions. 
If $\lambda \subset m^n$ then we write $m^n-\lambda$ for the complementary partition, given by 
\[
(m^n-\lambda)_i = \begin{cases} m-\lambda_{n+1-i}  & 1\leq i\leq n \\ 0 & i>n \end{cases}
\]
Moreover, if $\ell(\lambda)\leq n$ we define $m^n+\lambda$ to be the partition
\[
(m^n+\lambda)_i = \begin{cases} m+\lambda_i & 1\leq i\leq n \\0 & i>n \end{cases}
\]
Similarly, if $\lambda_1\leq m$ we define $m^n\cdot \lambda$ to be the partition
\[
(m^n \cdot \lambda)_i = \begin{cases} m & 1\leq i\leq n, \\ \lambda_{i-n} & i>n. \end{cases}
\]
Stated differently: $m^n \cdot \lambda = (m^n+\lambda')'$. 

We define the relation $\prec_m$ by setting $\kappa \prec_m \lambda$ if $\kappa \subset \lambda \subset m^n+\kappa$ for sufficiently large $n$ (for example $n= \max(\ell(\kappa),\ell(\lambda))$). Likewise we define $\prec_m'$ by setting $\kappa \prec_m' \lambda$ if and only if $\kappa' \prec_m \lambda'$. In both cases we omit the subscript if $m=1$. 
Recall that a chain $0 =\lambda^{(0)} \prec' \lambda^{(1)} \prec' \lambda^{(2)} \prec' \cdots \prec' \lambda^{(n)}$ corresponds to a semistandard Young tableau of shape $\lambda^{(n)}$, where the entries of the diagram are determined by writing $k$ in the boxes in the strip $\lambda^{(k)}/ \lambda^{(k-1)}$. 

Some convenient numbers associated with $\lambda$ are
\begin{align*}
|\lambda|&=\sum_i \lambda_i \\
n(\lambda) &= \sum_{i} \binom{\lambda_i'}{2} = \sum_{(i,j)\in \lambda} (i-1) = \frac12 \sum_{(i,j) \in \lambda} (\lambda_j'-1)
\end{align*}
Here we use $\sum_{(i,j)\in \lambda}$, which means we sum over all boxes in the Young diagram, i.e. we sum over 
$1\leq i\leq l(\lambda)$ and all $1\leq j\leq \lambda_i$. A similar notation is used for products. 

In the entire article we will use $n$ for the number of variables $z_i$, which means that our partitions usually satisfy $\ell(\lambda)\leq n$. (From context it should always be clear when we use $n$ as number of variables and when we use it for the function $n(\lambda)$.)

\subsection{Multivariate $q$-symbols}

A meromorphic function $f(z_i,\ldots, z_n)$ is called a $BC_n$-symmetric $p$-abelian function if it satisfies
\begin{itemize}
\item $f$ is invariant under permutations of the $z_i$;
\item $f$ is invariant under replacing any one of the $z_i$ by $1/z_i$;
\item $f$ is invariant under replacing any one of the $z_i$ by $pz_i$.
\end{itemize}
We define the space $A^{(n)}(u_0;p,q)$ as the space of all $BC_n$-symmetric $p$-abelian functions $f$ such that
\[
\prod_{i=1}^n \theta(pq z_i^{\pm 1}/u_0;q;p)_m f(\ldots,z_i,\ldots)
= \prod_{i=1}^n \frac{\Gamma(u_0 z_i^{\pm 1})}{\Gamma(u_0 q^{-m} z_i^{\pm 1})} f(\ldots,z_i,\ldots)
\]
is holomorphic for $z\in (\mathbb{C}^*)^n$ for sufficiently large $m$. That is, $f$ can only have poles at the points $u_0 q^{-l} p^k$ and
$u_0^{-1} q^l p^k$ for $k\in \mathbb{Z}$ and $1\leq l\leq m$, and these poles must be simple. It should be noted that 
$A^{(1)}(u_0;p,q) = A(u_0;p,q)$ as defined in \cite{vdBRuniv}.

Let us now define the $C$-symbols (also written multiplicatively in $x$).
\begin{align}
\label{eqdefc0}C_{\lambda}^0(x;q,t;p) &= \prod_{(i,j)\in\lambda} \theta(q^{j-1}t^{1-i}x;p) &\tilde C_{\lambda}^0(x;q,t) &= \prod_{(i,j)\in\lambda} (1-q^{j-1}t^{1-i}x) \\
\label{eqdefcm}C_{\lambda}^-(x;q,t;p) &= \prod_{(i,j)\in\lambda} \theta(q^{\lambda_i-j}t^{\lambda_j'-i}x;p) & \tilde C_{\lambda}^-(x;q,t) &= \prod_{(i,j)\in\lambda} (1-q^{\lambda_i-j}t^{\lambda_j'-i}x)\\
\label{eqdefcp} C_{\lambda}^+(x;q,t;p) &= \prod_{(i,j)\in\lambda} \theta(q^{\lambda_i+j-1}t^{2-\lambda_j'-i}x;p) &
\tilde C_{\lambda}^+(x;q,t) &= \prod_{(i,j)\in\lambda} (1-q^{\lambda_i+j-1}t^{2-\lambda_j'-i}x)
\end{align}
The elliptic $C_{\lambda}$ are as in \cite{RainsBCn}, while the $\tilde C_{\lambda}$ are the 
$C_{\lambda}$ from \cite{RainsBCnpoly}.
%
%
%
%
%
Finally we define the $\Delta$-symbols by
\[
\Delta_\lambda^0(a~|~b;q,t;p) = \frac{C_{\lambda}^0(b;q,t;p)}{C_\lambda^0(pqa/b;q,t;p)},
\] 
which is written multiplicatively in $b$ and
\[
\Delta_\lambda(a~|~b_1,\ldots, b_r;q,t;p) = \Delta_\lambda^0(a~|~b_1, \ldots, b_r;q,t;p) \frac{C_{2\lambda^2}^0(pqa;q,t;p)}{C_\lambda^-(pq,t;q,t;p) C_{\lambda}^+(a,pqa/t;q,t;p)}
\]
which is emphatically not written multiplicatively. Here $2\lambda^2$ denotes the partition with $(2\lambda^2)_i = 2(\lambda_{\lceil i/2\rceil})$. A $q$-hypergeometric version of $\Delta_{\lambda}$ is defined by 
\[
\tilde \Delta_{\lambda}^{(n)}(a;q,t) = \frac{\tilde C_{2\lambda^2}^0(aq;q,t) \tilde C_{\lambda}^0(t^n;q,t)}{
\tilde C_{\lambda}^0(aq/t^n) \tilde C_{\lambda}^-(q,t;q,t) \tilde C_{\lambda}^+(a,aq/t;q,t)}
\left(-\frac{1}{a^2q^2t^{n-1}}\right)^{|\lambda|} q^{-3n(\lambda')} t^{5 n(\lambda)} 
\]
Whenever no confusion is possible we omit the $;q,t;p$ or the $;q,t$ from the arguments.

The $C_\lambda$'s are multivariate analogues of the theta Pochhammer symbols, while the $\tilde C_{\lambda}$'s are multivariate versions of $q$-Pochhammer symbols. The $\Delta_{\lambda}$ and $\tilde \Delta_{\lambda}$ correspond univariately to the summands of a very well poised series, indeed 
%
\begin{align*}
\Delta_l(a~|~b_1,\ldots,b_r;q,t;p) &=
\frac{\theta(pqa;q;p)_{2l} \theta(\frac{pqa}{t};q;p)_{l}}{\theta(pq,t,aq^l;q;p)_l }
\prod_{s=1}^r \frac{\theta(b_s;q;p)_l}{\theta(\frac{pqa}{b_s};q;p)_l}, \\ 
\tilde \Delta_{l}^{(1)}(a;q,t) &= \frac{1-aq^{2l}}{1-a} \frac{(a;q)_{l}}{(q;q)_l} \left( -\frac{1}{a^2q^2}\right)^l q^{-3\binom{l}{2}}.
\end{align*}
\subsection{Transformations of generalized $q$-symbols}
It is convenient to write down a few elementary transformation formulas for these functions, analogues of some identities for theta Pochhammer symbols. 
The following expressions can all be obtained from the two elementary symmetries $\theta(px;p)=\theta(1/x;p)=-\frac{1}{x}\theta(x;p)$.
\begin{align}
C_{\lambda}^0(px;q,t;p) &= C_{\lambda}^0(1/x;1/q,1/t;p) = C_{\lambda;q,t;p}^0(x;q,t;p) \left(-\frac{1}{x}\right)^{|\lambda|} q^{-n(\lambda')} t^{n(\lambda)}, \label{eqc0p} \\
C_{\lambda}^-(px;q,t;p) &= C_{\lambda}^-(1/x;1/q,1/t;p) = C_{\lambda;q,t;p}^-(x) \left(-\frac{1}{x}\right)^{|\lambda|} q^{-n(\lambda')} t^{-n(\lambda)}, \label{eqcmp}\\
C_{\lambda}^+(px;q,t;p) & = C_{\lambda}^+(1/x;1/q,1/t;p) =  C_{\lambda;q,t;p}^+(x) \left(-\frac{1}{qx}\right)^{|\lambda|} q^{-3n(\lambda')} t^{3n(\lambda)}. \label{eqcpp}
\end{align}

Likewise we can find shifting formulas for the $\Delta$ functions:
\begin{align}
\Delta_{\lambda}^0(a~|~pb, \ldots, v_i,\ldots) & = \Delta_{\lambda}^0(a~|~b,\ldots,v_i,\ldots) \left( \frac{1}{aq}\right)^{|\lambda|} q^{-2n(\lambda')}t^{2n(\lambda)} \\
\Delta_{\lambda}^0(\frac{a}{p}~|~ b_1, \ldots, b_r) &= \Delta_{\lambda}^0(a~|~b_1,\ldots, b_r)\left(\frac{\prod_i b_i}{(-aq)^r}\right)^{|\lambda|}
q^{-r n(\lambda')} t^{r n(\lambda)} \\
\Delta_{\lambda}(a~|~pb, \ldots, v_i, \ldots) &= \Delta_{\lambda}(a~|~b,\ldots,v_i,\ldots) \left( \frac{1}{aq}\right)^{|\lambda|} 
q^{-2n(\lambda')}t^{2n(\lambda)} \\
\Delta_{\lambda}(\frac{a}{p}~|~ b_1, \ldots, b_r) &= \Delta_{\lambda}(a~|~b_1,\ldots, b_r)\left(\frac{pq}{t} \frac{\prod_i b_i}{(-aq)^{r-2}}\right)^{|\lambda|}
q^{(2-r) n(\lambda')} t^{(r-2) n(\lambda)} 
\end{align}
We'd like to remark that $\Delta_{\lambda}^0(a~|~b_1,\ldots,b_{r})$ is invariant if we multiply each individual $b_j$ by an integer multiple of $p$, while keeping the product $\prod_r b_r$ fixed. Moreover, if $r$ is even, then $\Delta_{\lambda}^0$ is invariant if we multiply $a$ and the $b_j$'s by integer multiples of $p$, as long as the balancing condition $\prod_i b_i = (apq)^{r/2}$ holds (both before and after the $p$-shift). Similarly, as long as the balancing condition $pq \prod_{i} b_i=t (apq)^k$ holds $\Delta_{\lambda}(a~|~b_1,\ldots,b_{2k+2})$ remains invariant under multiplication of the parameters by integer powers of $p$.

\subsection{Power series in $p$}
Most functions we are interested in are elements of the field $M(x)$, defined in \cite[Section 2]{vdBRuniv}. This is a field of (multivariate) meromorphic functions in the variables $x=(x_1,x_2,\ldots)$, which can be expressed as power series 
$f=\sum_{t\in T} a_t(x) p^t$ for some discrete set $T$, which is bounded from below, with coefficients $a_t$, which are rational functions in $x$. The valuation of such a series is $val(f) = \min_{t\in T}t$ and the leading coefficient is given by $lc(f) = a_{val(t)}$. Since we are interested in the behavior as $p\to 0$, we think of the valuation as describing the size of $f$ as $p\to 0$, while the leading coefficient gives the limit of $f$ (after proper rescaling). The conditions on the space imply that this limit is always uniform on compact sets outside the zero-set of some polynomial in $x$. Moreover, due to some extra conditions placed on the rational functions $a_t$ we obtained the following iterated limit property \cite[Proposition 2.3]{vdBRuniv}.
\begin{proposition}\label{propiteratedlim}
Let $f\in M(x)$, write $p^u x = (p^{u_1}x_1,p^{u_2}x_2,\ldots)$. Then for small enough $\epsilon>0$ and any $u$ we have
\[
lc(lc(f)(p^u x)) = lc(f(p^{\epsilon u}x)), \qquad val(f)+ \epsilon \  val(lc(f)(p^u x)) = val(f(p^{\epsilon u} x)).
\]
\end{proposition}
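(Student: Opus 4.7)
The plan is to reduce the proposition to a careful bookkeeping of valuations in the power-series expansion $f=\sum_{t\in T} a_t(x)p^t$. Set $v=val(f)$ so that $lc(f)=a_v$. For each $t\in T$, substituting $x\mapsto p^u x$ turns the rational function $a_t(x)$ into a Laurent series in $p$ whose coefficients are rational in $x$; write its valuation as $w_t$ and its leading coefficient as $b_t(x)$, so that in particular $b_v=lc(lc(f)(p^u x))$ and $w_v=val(lc(f)(p^u x))$. The key elementary observation is that since $a_t$ is a ratio of finite sums of monomials in $x$, replacing $u$ by $\epsilon u$ just scales the $p$-exponent of each monomial by $\epsilon$, with no change in which monomials survive or cancel. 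Thus $val(a_t(p^{\epsilon u}x))=\epsilon w_t$ and $lc(a_t(p^{\epsilon u}x))=b_t(x)$ for every $t$.

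Expanding
\[
f(p^{\epsilon u}x) = \sum_{t\in T} a_t(p^{\epsilon u}x)\,p^t,
\]
each summand contributes a leading term in $p$ of valuation $t+\epsilon w_t$ with coefficient $b_t(x)$. The candidate leading term is the one from $t=v$, of valuation $v+\epsilon w_v$ and coefficient $b_v(x)=lc(lc(f)(p^u x))$, which would give both asserted equalities. So the content of the proposition is that (i) for small enough $\epsilon$ no other index beats this, and (ii) one may extract the $p$-leading coefficient termwise.

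The main obstacle is exactly the uniformity of (i) over $t\in T$: for $t>v$ we need $t-v>\epsilon(w_v-w_t)$, and without control this could fail for large $t$ where $w_t$ is very negative. This is precisely where the structural hypotheses on $M(x)$ from \cite[Section 2]{vdBRuniv} are used: the conditions placed on the rational functions $a_t$ (governing the location and order of their poles) force $w_t$ to grow at most linearly in $t$, so one obtains a bound of the form $w_t\geq -C(1+t-v)$ for a constant $C=C(u)$ depending only on $u$ and on the $M(x)$-data of $f$. Combined with discreteness of $T$ and its lower boundedness, this implies that for any $\epsilon<1/(2C)$ the quantity $t+\epsilon w_t$ exceeds $v+\epsilon w_v$ by at least $\tfrac{1}{2}(t-v)-\epsilon|w_v|$ once $t-v$ is large, so only finitely many indices could compete with $t=v$; for each of those, the strict inequality $t-v>\epsilon(w_v-w_t)$ is achieved by taking $\epsilon$ smaller still.

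Finally, a brief appeal to the convergence theory underlying $M(x)$ (summation of the $p$-series term by term, and the fact that the above finite reduction already captures any index within $\epsilon$ of the minimal valuation) justifies extracting the leading $p$-coefficient from the full sum. This yields simultaneously $val(f(p^{\epsilon u}x))=v+\epsilon w_v$ and $lc(f(p^{\epsilon u}x))=b_v(x)$, which are the two equalities of the proposition.
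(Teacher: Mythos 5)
The present paper does not actually prove Proposition~\ref{propiteratedlim}: it is stated as imported from \cite[Proposition 2.3]{vdBRuniv} (``\ldots we obtained the following iterated limit property''), so there is no internal proof to compare your attempt against. I can therefore only assess your argument on its own terms.

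Your termwise analysis is correct and is the natural starting point. In particular, the claim that $val(a_t(p^{\epsilon u}x))=\epsilon\,w_t$ and $lc(a_t(p^{\epsilon u}x))=b_t(x)$ for \emph{every} $\epsilon>0$ is sound: for a rational function $a_t=P/Q$, the set of monomial exponents $\alpha$ minimizing $u\cdot\alpha$ is the same as the set minimizing $\epsilon u\cdot\alpha$, so the surviving leading monomials of numerator and denominator do not change. You also correctly identify the crux: one needs uniformity over the infinitely many $t\in T$, i.e.\ that $w_t$ cannot decrease too quickly as $t\to\infty$.

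The gap is exactly there. The bound $w_t\geq -C(1+t-v)$ is the entire content of the ``extra conditions placed on the rational functions $a_t$'' that the paper alludes to, and you assert it rather than derive it. Without knowing the actual defining conditions of $M(x)$ from \cite{vdBRuniv}, the assertion that they enforce a linear bound on pole order in $p^u x$ as a function of $t$ is plausible but unverified; if the conditions only gave, say, polynomial-in-$t$ growth of the degrees of $a_t$ with no control on how poles shift under $x\mapsto p^u x$, your estimate could fail. Relatedly, your final ``brief appeal to convergence theory'' tacitly assumes that $M(x)$ is closed under the substitution $x\mapsto p^{\epsilon u}x$, so that $val(f(p^{\epsilon u}x))$ is even well-defined as the minimum over the (a priori infinite) set of contributions $t+\epsilon w_t$; this too must come from the definition of $M(x)$ and should be cited or proved, not just waved at. If those two points are supplied from \cite{vdBRuniv}, the rest of your argument (strict uniqueness of the minimizer $t=v$ after shrinking $\epsilon$, hence $lc(f(p^{\epsilon u}x))=b_v$ and $val(f(p^{\epsilon u}x))=v+\epsilon w_v$) goes through cleanly.
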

With as a corollary the following important result on the valuation of a sum of two terms
\begin{corollary}\label{corsumequalval}
Let $f,g\in M(x)$ and define $h=f+g$. 
\begin{itemize}
\item If $val(f)<val(g)$, then $val(h)=val(f)$ and $lc(h)=lc(f)$. 
\item If $val(f)=val(g)$, and there exists a $u$ such that 
for all small enough $\epsilon>0$ we have $val(f(p^{\epsilon u}x))< val(g(p^{\epsilon u}x))$. Then $val(h)=val(f)$ and $lc(h)=lc(f)+lc(g)$. 
\end{itemize}
\end{corollary}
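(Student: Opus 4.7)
The first bullet should be immediate from the series definitions. Writing $f=\sum_t a_t(x)p^t$ and $g=\sum_t b_t(x)p^t$ with $a_{val(f)}=lc(f)\neq 0$, the assumption $val(f)<val(g)$ forces $b_t=0$ for all $t\leq val(f)$, so $h=f+g$ begins with the term $lc(f)\,p^{val(f)}$ and the conclusion follows.

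For the second bullet, set $v=val(f)=val(g)$. The coefficient of $p^v$ in $h=f+g$ is the rational function $lc(f)+lc(g)$, and the whole content of the statement is that the hypothesis prevents this sum from vanishing identically in $x$. The plan is to apply Proposition \ref{propiteratedlim} to both $f$ and $g$, yielding
\[
val(f(p^{\epsilon u}x)) = v + \epsilon\cdot val(lc(f)(p^u x)), \qquad val(g(p^{\epsilon u}x)) = v + \epsilon\cdot val(lc(g)(p^u x)).
\]
Subtracting and dividing by $\epsilon>0$, the hypothesis becomes $val(lc(f)(p^u x)) < val(lc(g)(p^u x))$. If $lc(f)+lc(g)$ were the zero rational function in $x$, then $lc(g)(p^u x)=-lc(f)(p^u x)$ as elements of $M(x)$, and these two would have equal valuations---contradicting the strict inequality just obtained. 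Hence $lc(f)+lc(g)$ is nonzero as a rational function, so the coefficient of $p^v$ in $h$ does not cancel and both conclusions $val(h)=v$, $lc(h)=lc(f)+lc(g)$ follow at once.

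The only conceptual subtlety is that one must treat $lc(f)(p^u x)$ as a bona fide element of $M(x)$ in order to speak of its valuation; this is already built into Proposition \ref{propiteratedlim}, and for the contradiction step above one uses only the trivial fact that negation commutes with the substitution $x\mapsto p^u x$ and does not affect the valuation. No other step requires more than routine bookkeeping, so the real work has all been absorbed into Proposition \ref{propiteratedlim}.
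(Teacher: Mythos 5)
Your proof is correct and takes the intended approach: the paper states the corollary immediately after Proposition~\ref{propiteratedlim} precisely because the iterated-limit identity is the tool needed for the second bullet, and your argument applies it in exactly the expected way. The contradiction step — reducing the hypothesis to $val(lc(f)(p^u x)) < val(lc(g)(p^u x))$ and observing that this would be violated if $lc(f)+lc(g)$ vanished identically — is the whole content of the corollary, and you have isolated it cleanly.
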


\subsection{Limits of generalized $q$-symbols} 
Of course the $q$-symbols discussed before are elements of the field $M(x)$, and since every function appearing in this article is built using these $q$-symbols, they are elements of $M(x)$ as well. Let us now discuss the valuations and leading coefficients of the elliptic $q$-symbols.
%

For ordinary theta functions we have
\[
val(\theta(xp^{\alpha};p)) =\frac12 \{\alpha\}(\{\alpha\}-1)- \frac12 \alpha(\alpha-1), \qquad 
lc(\theta(xp^{\alpha};p)) = \begin{cases} (1-x) \left(-\frac{1}{x}\right)^{ \alpha } & \alpha \in \mathbb{Z} \\
\left(-\frac{1}{x}\right)^{\lfloor \alpha \rfloor} & \alpha \not \in \mathbb{Z}, \end{cases}
\]
where $\{\alpha\} = \alpha-\lfloor \alpha \rfloor$ denotes the fractional part of $\alpha$. Note that $val(\theta(xp^{\alpha};p))$ is a continuous piecewise linear function in $\alpha$. The valuations and leading coefficients of the $C$-symbols are direct consequences of this. While a general formula for the leading coefficient is easily given, it becomes rather complex as it changes for the different $C$-symbols. Thus we refer to the shifting formulas
\eqref{eqc0p} to note that it suffices to give the results for $0\leq \alpha<1$. We have
\begin{equation}\label{eqlimc}
val(C_{\lambda}^{\epsilon}(xp^{\alpha})=|\lambda| (\frac12\{\alpha\}(\{\alpha\}-1) - \frac12\alpha(\alpha-1)), \quad (\alpha\in \mathbb{R}), \qquad 
lc(C_{\lambda}^{\epsilon}(xp^{\alpha})=\begin{cases} \tilde C_{\lambda}^{\epsilon}(x) & \alpha=0 \\\
1 & 0<\alpha<1, \end{cases}
\end{equation}
where $\epsilon=0$, $+$, or $-$.

To take limits of $\Delta_{\lambda}^0$ it is often most convenient to express it in terms of $C_{\lambda}^0$, and take the limits of the $C_{\lambda}^0$'s. One of the important reasons we so often use the $\Delta_{\lambda}^0$ is that it is elliptic (under the balancing condition given above). After taking the limit, we cannot shift by $p$ anymore, so ellipticity becomes a non-existent concept, thus diminishing the usefulness of this notation.

As for $\Delta_{\lambda}$ we'll only consider $\Delta_{\lambda}(a p^{\alpha}~|~t^n ;q,t;p)$. It turns out that every instance of $\Delta_{\lambda}$ we encounter has $t^n$ as one of its $b$-parameters. Moreover the quotient of any $\Delta_{\lambda}$ and this one is a $\Delta_{\lambda}^0$ and we can express its limits in terms of $\tilde C_{\lambda}^0$'s as described above. Thus writing down the valuation and leading coefficient of this specific $\Delta_{\lambda}$ suffices to be able to obtain the limits of the general case. We assume $\ell(\lambda) \leq n$, as otherwise $\Delta_{\lambda}(a ~|~ t^n)=0$ identically.
%
\begin{align*}
val(\Delta_{\lambda}(ap^{\alpha}~|~ t^n) & = -2\alpha |\lambda|, \quad (0\leq \alpha<1), \\ 
lc(\Delta_{\lambda}(ap^{\alpha}~|~ t^n) )&= \begin{cases}
\tilde \Delta_{\lambda}^{(n)}(a;q,t) & \alpha=0, \\
\frac{\tilde C_{\lambda}^0(t^n)}{\tilde C_{\lambda}^-(q,t;q,t)}
\left(-\frac{1}{a^2q^2t^{n-1}}\right)^{|\lambda|} q^{-3 n(\lambda') } t^{5n(\lambda)}
  & 0<\alpha<1.
\end{cases}
\end{align*}

We would like to finish this section by making the following observation. Notice that the leading coefficients of 
these terms, only depend on whether $\alpha=0$ or $0<\alpha<1$. For general $\alpha$ it then follows that the leading coefficients  $lc(C_{\lambda}^{\epsilon}(p^{\alpha}x) )$ and $lc(\Delta(ap^{\alpha}~|~t^n))$ only depend on $\alpha$ through the  component of $\mathbb{R}$ which contains $\alpha$ if we cut $\mathbb{R}$ at the integers (i.e., write $\mathbb{R} = \mathbb{Z} \cup \bigcup_{n\in \mathbb{Z}} (n,n+1)$). Moreover, the leading coefficients associated to two $\alpha$'s in different components, which are related to each other by an integer shifts (i.e. either both $\alpha$'s are integers, or both are non-integers), differ by a  monomial factor (in $x$, $q$ and $t$).

\section{Interpolation functions and binomial coefficients}\label{secinterpolation}
In this section we give a recursive definition of the interpolation functions, binomial coefficients and biorthogonal functions from \cite{RainsBCn} and \cite{Rainstrafo}. We use a recursive definition here as this treatment is more suited for taking limits. We omit most of the proofs; all proofs are given in the referenced articles. Some of the basic results follow from a simple recursion (together with often somewhat tedious calculations). Many deeper results, however, are quite difficult to prove in the presentation we give here, so we are happy to simply observe that our functions equal the functions defined in the cited papers, and thus must satisfy the same properties.

We begin by defining
\begin{definition}\label{defbinon}
The binomial coefficient $\binon{\lambda}{\mu}_{[a,t];q,t;p}$ vanishes unless $\mu \prec'\lambda$, in which case
\begin{multline*}
\binon{\lambda}{\mu}_{[a,t];q,t;p} = \prod_{\substack{(i,j)\in \lambda \\ \lambda_j'=\mu_j'}}
\frac{\theta(q^{\lambda_i+j-1}t^{2-\lambda_j'-i} a;p)}{\theta(q^{\mu_i-j}t^{\mu_j'-i} pq;p)}
\prod_{\substack{(i,j)\in \lambda \\ \lambda_j' \neq \mu_j'}}
\frac{\theta(q^{\lambda_i-j}t^{1+\lambda_j'-i} ;p)}{\theta(q^{\mu_i+j-1}t^{-\mu_j'-i} pqa;p)}
\\ \times 
\prod_{\substack{(i,j)\in \mu \\ \lambda_j'=\mu_j'}}
\frac{\theta(q^{\lambda_i-j}t^{\lambda_j'-i} pq;p)}{\theta(q^{\mu_i+j-1}t^{1-\mu_j'-i} a;p)}
\prod_{\substack{(i,j)\in \mu \\ \lambda_j'\neq \mu_j'}}
\frac{\theta(q^{\lambda_i+j-1}t^{1-\lambda_j'-i} pq a;p)}{\theta(q^{\mu_i-j}t^{1+\mu_j'-i} ;p)}
\end{multline*}
\end{definition}
And we have the important lemma
\begin{lemma}\label{lembinonpshift}
We have
\begin{equation}\label{eqbinonp}
\binon{\lambda}{\mu}_{[pa,t];q,t;p} =
(-pqa)^{|\lambda|-|\mu|} q^{n(\lambda')-n(\mu')} t^{n(\mu)-n(\lambda)-|\lambda|}
\binon{\lambda}{\mu}_{[a,t];q,t;p} 
\end{equation}
\end{lemma}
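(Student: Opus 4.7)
\medskip
\noindent\textbf{Proof plan.} The strategy is a direct computation: substitute $a\to pa$ in the explicit product in Definition \ref{defbinon}, and apply the elementary theta identity $\theta(px;p) = -x^{-1}\theta(x;p)$ to each factor whose argument involves $a$. Exactly four types of theta factors are affected: the numerators of the first and fourth subproducts and the denominators of the second and third subproducts. Each such factor contributes a monomial in $a$, $p$, $q$, $t$, and the lemma reduces to checking that the product of these monomials equals $(-pqa)^{|\lambda|-|\mu|}q^{n(\lambda')-n(\mu')}t^{n(\mu)-n(\lambda)-|\lambda|}$.

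Before doing the bookkeeping, it is useful to observe that the hypothesis $\mu\prec'\lambda$ forces $\lambda/\mu$ to be a vertical strip, so $\lambda_j'-\mu_j'\in\{0,1\}$ for every $j$. In particular, in a column $j$ with $\lambda_j' = \mu_j'$ the cells of $\lambda$ and $\mu$ in that column coincide, so the index sets $\{(i,j)\in\lambda:\lambda_j'=\mu_j'\}$ and $\{(i,j)\in\mu:\lambda_j'=\mu_j'\}$ are \emph{the same set}, which I will call $S$. In a column $j$ with $\lambda_j'\neq\mu_j'$, $\lambda$ has exactly one more cell than $\mu$, namely $(\lambda_j',j)$, and the union of all such unequal columns is precisely the skew shape $\lambda/\mu$.

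Using this observation the contributions of the first and third subproducts (both indexed by $S$) can be combined cell-by-cell, and the contributions of the second (over $\lambda$-cells in unequal columns) and the fourth (over $\mu$-cells in the same columns) can be combined by pairing common cells and treating the extra cells of $\lambda/\mu$ separately. On the combined cells in $S$ the powers of $a$ cancel, the signs cancel, and one is left with a clean monomial involving $\mu_i-\lambda_i$, $\lambda_j'-\mu_j' = 0$, and similar tame differences. On the unequal-column cells the $a$- and $p$-contributions coincide with $|\lambda|-|\mu|$, and everything else becomes a sum of standard partition statistics evaluated over $\lambda$, $\mu$, or their symmetric difference.

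Once this reduction is in place, the remaining verification uses the standard identities
\[
\sum_{(i,j)\in\lambda}(i-1) = n(\lambda),\qquad \sum_{(i,j)\in\lambda}(j-1) = n(\lambda'),\qquad \sum_{(i,j)\in\lambda}1 = |\lambda|,
\]
together with their analogues for $\mu$, to match the $q$-exponent with $n(\lambda')-n(\mu')$ and the $t$-exponent with $n(\mu)-n(\lambda)-|\lambda|$. The $(-pqa)^{|\lambda|-|\mu|}$ factor comes entirely from the unequal-column contributions. The main obstacle is purely combinatorial bookkeeping: one has to be careful in splitting each of the four sums into the "equal" and "unequal" column regions and in tracking where the extra cell $(\lambda_j',j)$ of $\lambda/\mu$ appears. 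No conceptual difficulty arises because the splitting $S_1=S_3$ and the vertical-strip structure of $\lambda/\mu$ make every sum telescopic in terms of the standard statistics above.
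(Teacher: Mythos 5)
Your proposal is correct and follows exactly the strategy the paper indicates: apply $\theta(px;p)=-x^{-1}\theta(x;p)$ to each of the four $a$-dependent theta factors in Definition~\ref{defbinon} and simplify the resulting monomial combinatorially, using the two key vertical-strip observations that $\{(i,j)\in\lambda:\lambda_j'=\mu_j'\}=\{(i,j)\in\mu:\lambda_j'=\mu_j'\}$ and that the $\lambda$-cells and $\mu$-cells in the columns where $\lambda_j'\ne\mu_j'$ differ precisely by the skew shape $\lambda/\mu$. These are the same ``combinatorial arguments'' the paper invokes (and spells out later in the proof of Lemma~\ref{lemmalimbinon}), and the bookkeeping you describe does close: the paired cells leave only $q^{\mu_i-\lambda_i}t^{-1}$ (on $S$) or $q^{\mu_i-\lambda_i}$ (on the common unequal-column cells), while the $|\lambda|-|\mu|$ unpaired cells of $\lambda/\mu$ produce the $(-pqa)^{|\lambda|-|\mu|}$ factor together with the residual $q$- and $t$-powers, which collapse to $n(\lambda')-n(\mu')$ and $n(\mu)-n(\lambda)-|\lambda|$ via $\sum_{(i,j)\in\lambda/\mu}(j-1)=n(\lambda')-n(\mu')$, $\sum_{(i,j)\in\lambda/\mu}(i-1)=n(\lambda)-n(\mu)$, and the vertical-strip facts $\mu_i=j-1$ and $\lambda_j'=\mu_j'+1$ on $\lambda/\mu$. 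Your write-up is, if anything, more explicit than the paper's one-line proof.
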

\begin{proof}
The result follows from the basic equation of $\theta(px;p) = -\frac{1}{x} \theta(x;p)$ and some 
combinatorial arguments to simplify the expression.
%
\end{proof}

This gives the coefficients in the branching rule for interpolation functions. 

\begin{definition}
The interpolation functions are defined recursively by setting
$R_0^{*(0)}(-;q,t;p)=1$ and $R_\lambda^{*(0)}(-;q,t;p)=0$ for $\lambda \neq 0$, 
and using the branching rule
\begin{equation}\label{eqbranch}
R_{\lambda}^{*(n+1)}( \ldots, z_i, \ldots, v;a,b;q,t;p)  = \sum_{\kappa: \kappa \prec' \lambda} c_{\lambda,\kappa}
 R_{\kappa}^{*(n)}(\ldots,z_i,\ldots ;a,b;q,t;p),
\end{equation}
where
\begin{equation}\label{eqdefclk}
c_{\lambda,\kappa} = c_{\lambda,\kappa}(a,b,v;q,t;p) = \binon{\lambda}{\kappa}_{[t^{n}\frac{a}{b},t];q,t;p}
\frac{\Delta_{\lambda}^0(t^n \frac{a}{b} ~|~ t^n av, t^n \frac{a}{v}, \frac{pqa}{bt};q,t;p)}
{\Delta_{\kappa}^0(t^{n-1} \frac{a}{b} ~|~ t^n av, t^n \frac{a}{v}, \frac{pqa}{bt};q,t;p)}.
\end{equation}
\end{definition}
This definition can be seen as a generalization of the expression of Macdonald polynomials using branching rules (see \cite{Macdonald}). By expanding the sum further this defines $R_{\lambda}^{*(n)}$ as a sum over all chains 
$0=\lambda^{(0)} \prec' \lambda^{(1)} \prec' \cdots \prec' \lambda^{(n)}=\lambda$, which corresponds to a semistandard Young tableau. Thus this definition is also a direct analogue of the combinatorial definition of the Schur functions (which would correspond to the case $c_{\lambda,\kappa} = v^{|\lambda/\kappa|}$).
%

The functions defined here are identical to the ones in \cite{RainsBCn}, as follows from Corollary 4.5, Theorem 4.16 and Definition 11 from \cite{RainsBCn}. 
In \cite{CG}, Coskun and Gustafson define their well-poised Macdonald functions $W_{\lambda}$ using essentially the same formula (though it is quite non-trivial to identify the coefficients in the branching rule). The resulting equality is given by 
\begin{align*}
W_{\lambda}(\frac{z_i}{a};a^2,\frac{a}{b}) &=
\left( \frac{b^2}{q^2t^{n-1} }\right)^{|\lambda|}
q^{-2n(\lambda')} t^{2n(\lambda)}
\frac{1}{C_{\lambda}^-(t)C_{\lambda}^+(t^{n-1} \frac{aq}{bt})}
\frac{C_{\lambda}^0(t^{n})}{C_{\lambda}^0(\frac{qa}{bt}) }
 \prod_{i=1}^n \theta(q t^{n-2i} \frac{a}{b})_{2\lambda_i} R_{\lambda}^{*(n)}(z_i;a,b).
\end{align*}

Let us list some of the basic properties of these interpolation functions (all can be easily proved inductively, except for the $z_i\leftrightarrow z_j$ symmetry).
\begin{itemize}
\item We have the following negate-the-parameters symmetry
\begin{equation}\label{eqsym1}
R_{\lambda}^{*(n)}(-z_i ; -a,-b;q,t;p) = R_{\lambda}^{*(n)}(z_i;a,b;q,t;p)
\end{equation}
\item The interpolation functions $R_{\lambda}^{*(n)}(z_i;a,b)$ are $BC_n$-symmetric theta functions. Moreover they are contained in $A^{(n)}(b;p,q)$.
\item We have the ``invert all parameters'' symmetry
\begin{equation}\label{eqoverq}
R_{\lambda}^{*(n)}(z_i; \frac{1}{a}, \frac{1}{b};\frac{1}{q},\frac{1}{t};p)=
R_{\lambda}^{*(n)}(z_i;a,b;q,t;p) q^{-4n(\lambda')} t^{4n(\lambda)} \left( \frac{b^2}{a^2 q^2 t^{2(n-1)}}\right)^{|\lambda|}.
\end{equation}
\end{itemize}

Moreover we have the following 
\begin{proposition}\label{propRstarp}
The interpolation functions satisfy the following shifting formulas :
\begin{align*}
R_{\lambda}^{*(n)}(z_i;pa,b;q,t;p) & =  \left( \frac{1}{a^2 t^{n-1}} \right)^{|\lambda|} q^{-2n(\lambda')}t^{2n(\lambda)} R_{\lambda}^{*(n)}(z_i;a,b;q,t;p), \\
R_{\lambda}^{*(n)}(z_i;a,pb;q,t;p) & =  \left( \frac{b^2}{t^{n-1} q^2} \right)^{|\lambda|}
q^{-2n(\lambda')} t^{2n(\lambda)} R_{\lambda}^{*(n)}(z_i;a,b;q,t;p), \\
R_{\lambda}^{*(n)}(\sqrt{p}z_i;\sqrt{p}a,\sqrt{p}b;q,t;p) & =  \left(\frac{b}{t^{n-1} aq}\right)^{|\lambda|} 
q^{-2n(\lambda')} t^{2n(\lambda)} R_{\lambda}^{*(n)}(z_i;a,b;q,t;p). \\
\end{align*}
\end{proposition}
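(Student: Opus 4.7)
The plan is to prove all three identities by induction on $n$, using the branching rule \eqref{eqbranch}. The base case $n=0$ is trivial: $R_\lambda^{*(0)}$ vanishes unless $\lambda=0$, and for $\lambda=0$ we have $|\lambda|=0$, so all three claimed prefactors reduce to $1$ and the identity becomes $1=1$.

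For the inductive step I substitute the shifted parameters into \eqref{eqbranch}. For instance, in the first identity,
\[
R_\lambda^{*(n+1)}(\ldots, z_i, \ldots, v; pa, b) = \sum_{\kappa \prec' \lambda} c_{\lambda,\kappa}(pa, b, v)\, R_\kappa^{*(n)}(\ldots, z_i, \ldots; pa, b),
\]
and I apply the inductive hypothesis to each $R_\kappa^{*(n)}(\ldots; pa, b)$. The identity then reduces to verifying that for every $\kappa \prec' \lambda$ the product of $c_{\lambda,\kappa}(pa, b, v)$ with the inductive $\kappa$-prefactor equals the target $\lambda$-prefactor (with $n$ replaced by $n+1$) times $c_{\lambda,\kappa}(a, b, v)$, so that this $\kappa$-independent factor pulls out of the sum and leaves $R_\lambda^{*(n+1)}(\ldots; a, b)$. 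The analogous identity is then checked in the other two cases.

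The core computation is the ratio $c_{\lambda,\kappa}(\text{shifted})/c_{\lambda,\kappa}(\text{unshifted})$. By the definition \eqref{eqdefclk}, this ratio splits as the product of the ratio of binomial coefficients $\binon{\lambda}{\kappa}_{[t^n a/b, t]}$, computed via Lemma \ref{lembinonpshift}, the ratio of $\Delta_\lambda^0$'s in the numerator, and the reciprocal ratio of $\Delta_\kappa^0$'s in the denominator. In each case one reads off from the parameters which of the four arguments of $\Delta^0$---the ``$a$-slot'' $t^n a/b$ (or $t^{n-1} a/b$ for the $\kappa$ factor) and the three ``$b$-parameters'' $t^n a v$, $t^n a/v$, $pqa/(bt)$---acquire a factor of $p$ or of $1/p$: under $a \to pa$ all four are multiplied by $p$; under $b \to pb$ the $a$-slot and the parameter $pqa/(bt)$ are divided by $p$ while the other two are unchanged; under $(z, a, b) \to \sqrt{p}(z, a, b)$---noting that $v = z_{n+1}$ is also rescaled by $\sqrt{p}$---only $t^n a v$ acquires a factor of $p$, while the other three arguments remain unchanged. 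The requisite shifts are then effected by iterating the two $\Delta^0$ shifting formulas from the notation section.

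After gathering monomials in $a, b, v, q, t$ and the powers of $|\kappa|$, $n(\kappa)$, $n(\kappa')$, the $\kappa$-dependent parts of the combined ratio cancel precisely against the $\kappa$-prefactor from the inductive hypothesis, leaving a $\kappa$-independent factor equal to the claimed prefactor. The main obstacle is simply bookkeeping: the asymmetric term $t^{n(\mu)-n(\lambda)-|\lambda|}$ in Lemma \ref{lembinonpshift} must combine delicately with the $\Delta_\lambda^0$ and $\Delta_\kappa^0$ contributions, whose first-slot difference of a single factor of $t$ produces mismatched monomial factors in the numerator versus denominator shifts; and for the third identity one must additionally track how the $\sqrt{p}$-shift of $v$ couples with the shifts of $a$ and $b$ inside the $b$-parameters $t^n a v$ and $t^n a/v$.
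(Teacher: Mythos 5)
Your proposal is correct and coincides with the paper's (implicit) proof: the paper remarks just before the bullet list of basic properties that everything can ``be easily proved inductively,'' and the natural reading is that Proposition \ref{propRstarp} follows the same inductive scheme via the branching rule \eqref{eqbranch}, which is exactly what you carry out. Your diagnosis of which slots of the $\binon{\lambda}{\kappa}$ and $\Delta^0$ factors acquire $p$-shifts in each of the three cases is accurate (in particular, for the third identity you correctly observe that $v=z_{n+1}$ also gets a $\sqrt p$, so that only $t^n a v$ picks up a factor of $p$ while $t^n a/v$ and $pqa/(bt)$ are left fixed), and the resulting ratio of branching coefficients reduces cleanly to $\Phi_{n+1}(\lambda)/\Phi_n(\kappa)$ as needed.
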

As a corollary we obtain
\begin{corollary}
Define the rescaled interpolation functions by
\[
Q_{\lambda}^{*(n)}(\ldots, z_i,\ldots;a,b;q,t;p) = 
R_{\lambda}^{*(n)}(\ldots, z_i,\ldots;a,b) 
\Delta_{\lambda}(\frac{t^{n-1}a}{b}~|~t^n)
\frac{C_{\lambda}^+(\frac{t^{n-1}a}{b}) C_{\lambda}^0(t^{n-1}ab)}
{C_{\lambda}^+(t^{2(n-1)}a^2) C_{\lambda}^0(\frac{pqt^{n-1}a}{b})}.
\]
Then as a function of $z_i$ we have $Q_{\lambda}^{*(n)} \in A^{(n)}(b;q,p)$.
$Q_{\lambda}^{*(n)}$ is elliptic in $a$ and $b$ (i.e. invariant under setting $a\to pa$ or $b\to pb$). Moreover it satisfies the equation
\[
Q_{\lambda}^{*(n)}(\ldots, \sqrt{p} z_i, \ldots;\sqrt{p} a,\sqrt{p}b;q,t;p)=
Q_{\lambda}^{*(n)}(\ldots, z_i, \ldots; a,b;q,t;p).
\]
\end{corollary}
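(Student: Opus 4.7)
The first assertion---that $Q_\lambda^{*(n)}\in A^{(n)}(b;q,p)$ as a function of the $z_i$---is immediate, since the rescaling factor depends only on $a,b,q,t,p$ (not on the $z_i$), and $R_\lambda^{*(n)}(z_i;a,b)$ is already in $A^{(n)}(b;p,q)$ by the bulleted properties stated just before Proposition~\ref{propRstarp}.

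For the ellipticity under $a\to pa$, the plan is to apply the first shifting identity in Proposition~\ref{propRstarp} together with the $p$-shift formulas for $\Delta_\lambda$ (the fourth $\Delta$-shift formula with $r=1$, $b_1=t^n$) and for the $C$-symbols (equations~\eqref{eqc0p} and \eqref{eqcpp}). Each of the five factors in the rescaling transforms: $\Delta_\lambda(t^{n-1}a/b\mid t^n)$, $C_\lambda^+(t^{n-1}a/b)$ and $C_\lambda^0(t^{n-1}ab)$ each pick up a single $p$-shift, while $C_\lambda^+(t^{2(n-1)}a^2)$ picks up a $p^2$-shift (two applications of~\eqref{eqcpp}), and $C_\lambda^0(pqt^{n-1}a/b)$ picks up one $p$-shift. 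Each contributes a monomial in $p,q,t,a,b$ whose exponents are linear in $|\lambda|$, $n(\lambda)$ and $n(\lambda')$. After assembling these contributions and combining with the explicit prefactor $(a^{-2}t^{1-n})^{|\lambda|}q^{-2n(\lambda')}t^{2n(\lambda)}$ coming from $R_\lambda^{*(n)}(z_i;pa,b)$, the powers of $p,q,t,a,b$ collapse to $1$. The case $b\to pb$ is entirely parallel, using the second identity of Proposition~\ref{propRstarp}.

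The diagonal scaling $(z_i,a,b)\to(\sqrt{p}\,z_i,\sqrt{p}\,a,\sqrt{p}\,b)$ is actually the cleanest of the three, because the ratio $a/b$ is preserved: $\Delta_\lambda(t^{n-1}a/b\mid t^n)$, $C_\lambda^+(t^{n-1}a/b)$ and $C_\lambda^0(pqt^{n-1}a/b)$ are unchanged, so only $C_\lambda^0(t^{n-1}ab)\to C_\lambda^0(pt^{n-1}ab)$ and $C_\lambda^+(t^{2(n-1)}a^2)\to C_\lambda^+(pt^{2(n-1)}a^2)$ contribute, each via a single application of~\eqref{eqc0p} or \eqref{eqcpp}. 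A short computation then shows the resulting monomial exactly cancels the prefactor $(b/(t^{n-1}aq))^{|\lambda|}q^{-2n(\lambda')}t^{2n(\lambda)}$ from the third identity of Proposition~\ref{propRstarp}.

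The only real obstacle is bookkeeping: keeping track of the exponents of $p,q,t,a,b$ through five factors and confirming that they cancel as claimed. No additional conceptual ingredient is needed, and indeed the particular form of the rescaling factor is precisely engineered so that these cancellations occur, which is presumably why the result is stated as a corollary without a detailed proof.
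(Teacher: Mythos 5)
Your proof is correct and follows exactly the intended route: the paper states this as an unproved corollary of Proposition~\ref{propRstarp} precisely because the verification is the mechanical bookkeeping you describe, combining the three $R$-shifts with the $p$-shift formulas \eqref{eqc0p}, \eqref{eqcpp} and the fourth $\Delta$-shift identity (with $r=1$, $b_1=t^n$). I checked the $a\to pa$ and $\sqrt p$-scaling cases in detail and the monomial prefactors do cancel as you claim, so nothing is missing.
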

The specific normalization we chose for $Q_{\lambda}^{*(n)}$ corresponds with the principal evaluation \cite[(3.37)]{RainsBCn}, which says
\begin{proposition}\label{propinterpoleval}
We have
\[
Q_{\lambda}^{*(n)}(\ldots, aq^{\lambda_i} t^{n-i},\ldots;a,b;q,t;p) = 1.
\]
\end{proposition}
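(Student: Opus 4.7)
The plan is to recognize that this proposition is equivalent, via the definition of $Q_\lambda^{*(n)}$, to the explicit principal evaluation formula
\[
R_\lambda^{*(n)}(aq^{\lambda_i}t^{n-i};a,b;q,t;p) = \frac{C_\lambda^+(t^{2(n-1)}a^2)\,C_\lambda^0(pqt^{n-1}a/b)}{\Delta_\lambda(t^{n-1}a/b\mid t^n)\,C_\lambda^+(t^{n-1}a/b)\,C_\lambda^0(t^{n-1}ab)}.
\]
My first step would be to verify this equivalence by directly matching the normalization factor defining $Q_\lambda^{*(n)}$ against the right-hand side above, which is immediate from the explicit form of the normalization.

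The core of the argument is then to establish this explicit value of $R_\lambda^{*(n)}$ at the principal specialization, which I would do by induction on $n$. The base case $n=0$ is immediate: only $\lambda=\emptyset$ contributes, $R_\emptyset^{*(0)}=1$, and every factor on the right reduces to $1$. For the inductive step, I would apply the branching rule \eqref{eqbranch} with $v=aq^{\lambda_{n+1}}$ and the remaining variables set to the principal specialization values $z_i = aq^{\lambda_i}t^{n+1-i}$, obtaining a sum over $\kappa \prec' \lambda$ of terms $c_{\lambda,\kappa}(a,b,aq^{\lambda_{n+1}})\,R_\kappa^{*(n)}(aq^{\lambda_i}t^{n+1-i};a,b)$. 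The subtlety here is that the inner $R_\kappa^{*(n)}$ is evaluated not at its own principal specialization but at a uniform $t$-shift thereof, so the inductive hypothesis cannot be applied term by term. Instead, I would iterate the branching fully, writing $R_\lambda^{*(n+1)}$ at the principal specialization as a sum over chains $0=\lambda^{(0)}\prec'\cdots\prec'\lambda^{(n+1)}=\lambda$, each weighted by an explicit product of coefficients $c_{\lambda^{(i)},\lambda^{(i-1)}}$ given by \eqref{eqdefclk}.

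The main obstacle is then the closed-form evaluation of this sum over chains. It is a multivariate elliptic hypergeometric summation---essentially a higher analogue of the Frenkel--Turaev identity---and is precisely the identity that underlies the proof of the principal evaluation formula in \cite{RainsBCn}. Once the summation is in hand, the remaining work is a mechanical simplification using the shifting formulas \eqref{eqc0p}--\eqref{eqcpp} together with the explicit form of $\binom{\lambda}{\kappa}$ in Definition \ref{defbinon}, producing the target ratio of $C$- and $\Delta$-symbols. Since the full inductive calculation is carried out in detail in \cite{RainsBCn}, the most economical proof in the present paper is simply to invoke that result, which is the route taken by the authors.
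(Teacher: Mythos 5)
Your proposal correctly unwinds the definition of $Q_\lambda^{*(n)}$ to recast the claim as the explicit principal evaluation of $R_\lambda^{*(n)}$, and your bottom line---invoke the principal evaluation formula \cite[(3.37)]{RainsBCn}---is exactly what the paper does; no proof is given beyond that citation (together with the earlier remark identifying the branching-rule definition here with the definition in \cite{RainsBCn}). One small caveat on your sketch: \cite{RainsBCn} obtains (3.37) from the vanishing-condition characterization of the interpolation functions rather than by the branching-rule induction you outline, but since the paper has already matched the two definitions, the citation is legitimate either way.
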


It should be noted that the interpolation functions are linearly independent for generic values of the parameters. Indeed the interpolation functions 
$R_{\lambda}^{*(n)}(z_i;t_0,u_0)$ with $\lambda \subset m^n$ form a basis of the $BC_n$ symmetric theta functions $f$ such that 
\[
\prod_{i=1}^n \theta(pq z_i^{\pm 1}/u_0;q;p)_m f(\ldots,z_i,\ldots)
\]
is holomorphic.

Using the interpolation functions we define the generalized binomial coefficients (following \cite[Definition 11]{RainsBCn})
\begin{definition}\label{defbinom}
The generalized binomial coefficients are given by
\[
\binom{\lambda}{\mu}_{[a,b];q,t;p} =
\Delta_{\mu}(\frac{a}{b}~|~t^n,\frac{1}{b};q,t;p)
R_\mu^{*(n)}( \ldots, \sqrt{a} q^{\lambda_i} t^{1-i},\ldots; t^{1-n}\sqrt{a}, \frac{b}{\sqrt{a}};q,t;p).
\]
for any $n\geq l(\lambda),l(\mu)$. These binomial coefficients are independent of the choice of $\sqrt{a}$ by \eqref{eqsym1}. They are also independent of the explicit choice of $n$.
\end{definition}

 Let us give a basic symmetry of the binomial coefficients:
\begin{equation}\label{eqbinomoverq}
\binom{\lambda}{\mu}_{[1/a,1/b];1/q,1/t;p} = \binom{\lambda}{\mu}_{[a,b];q,t;p}
\end{equation}
in view of \eqref{eqoverq}. 

Moreover the binomial coefficients are elliptic in $a$ and $b$.
\begin{proposition}\label{propbinomp}
We have
\[
\binom{\lambda}{\kappa}_{[ap,b];q,t;p} = \binom{\lambda}{\kappa}_{[a,b];q,t;p} = \binom{\lambda}{\kappa}_{[a,bp];q,t;p}.
\]
\end{proposition}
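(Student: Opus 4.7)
My plan is to verify each of the two $p$-shift identities by substituting directly into the definition
\[
\binom{\lambda}{\mu}_{[a,b];q,t;p} = \Delta_\mu\!\bigl(\tfrac{a}{b}\,\bigl|\,t^n,\tfrac{1}{b};q,t;p\bigr)\,
R_\mu^{*(n)}\!\bigl(\sqrt{a}\,q^{\lambda_i}t^{1-i};\,t^{1-n}\sqrt{a},\,\tfrac{b}{\sqrt{a}};\,q,t;p\bigr),
\]
applying the shifting formulas of Proposition~\ref{propRstarp} to the interpolation factor and the $\Delta$-shift formulas of Section~\ref{secnot} to the $\Delta_\mu$ factor, and verifying that the resulting monomial prefactors cancel.

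For $b\mapsto pb$, only the second interpolation parameter $b/\sqrt{a}$ is affected. One application of the second identity of Proposition~\ref{propRstarp} produces the prefactor $(b^2/(aq^2t^{n-1}))^{|\mu|}\,q^{-2n(\mu')}\,t^{2n(\mu)}$. On the $\Delta$ side, I would pass from $\Delta_\mu(a/(pb)\,|\,t^n,1/(pb))$ back to $\Delta_\mu(a/b\,|\,t^n,1/b)$ in two steps: rewriting $1/b=p\cdot 1/(pb)$ and invoking the $b_i\mapsto pb_i$ rule to absorb the $1/(pb)$ factor, and then inverting the $a\mapsto a/p$ rule with $r=2$ to shift the leading argument from $a/(pb)$ to $a/b$. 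A direct computation shows that the combined $\Delta$-prefactor is exactly the reciprocal of the $R^*$-prefactor.

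For $a\mapsto pa$, all three slots of $R_\mu^{*(n)}$ change: the argument $z_i=\sqrt{a}q^{\lambda_i}t^{1-i}$ and the first parameter $t^{1-n}\sqrt{a}$ acquire a factor $\sqrt{p}$, while the second parameter $b/\sqrt{a}$ acquires a factor $1/\sqrt{p}$. My plan is to extract the common $\sqrt{p}$ using the third identity of Proposition~\ref{propRstarp} with $\alpha=t^{1-n}\sqrt{a}$ and $\beta=b/(p\sqrt{a})$, leaving an $R_\mu^{*(n)}$ whose second parameter is $b/(p\sqrt{a})$ rather than $b/\sqrt{a}$. A second application of the second identity then shifts this residual $b$-parameter back, and after collecting the two prefactors the dependence on $n(\mu)$ and $n(\mu')$ drops out and leaves a clean $(pqt^{n-1}/b)^{|\mu|}$. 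On the $\Delta$ side, only the leading argument is touched ($a/b\mapsto pa/b$), and inverting the $a\mapsto a/p$ rule with $r=2$ produces a matching reciprocal $(b/(pqt^{n-1}))^{|\mu|}$.

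The only real obstacle is bookkeeping: each substitution produces a monomial in $a,b,p,q,t,t^n$ together with exponentials in $|\mu|,n(\mu),n(\mu')$, and one has to keep the signs and exponents straight to see that the contributions from the $R^*$-shifts and the $\Delta$-shifts truly collapse. No new idea beyond the shifting formulas already assembled is required, and no appeal to the balancing condition remarked upon after the $\Delta$-shift formulas is needed: the cancellation here occurs between $R^*_\mu$ and $\Delta_\mu$, not internally to either factor.
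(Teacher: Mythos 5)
Your proof is correct, and since the paper states Proposition~\ref{propbinomp} without supplying a proof, there is no paper argument to compare against; the approach you take, however, is clearly the intended one. You substitute directly into Definition~\ref{defbinom}, apply the $p$-shift identities of Proposition~\ref{propRstarp} to the interpolation factor and the $\Delta$-shift formulas to $\Delta_\mu(a/b\mid t^n,1/b)$, and check that the monomial prefactors cancel. I verified the bookkeeping: for $b\mapsto pb$ the $R^*$-shift gives $\bigl(b^2/(a q^2 t^{n-1})\bigr)^{|\mu|}q^{-2n(\mu')}t^{2n(\mu)}$ and the two $\Delta$-shifts combine to its exact reciprocal $\bigl(aq^2t^{n-1}/b^2\bigr)^{|\mu|}q^{2n(\mu')}t^{-2n(\mu)}$; for $a\mapsto pa$ the combination of the third and (inverted) second identity of Proposition~\ref{propRstarp} yields $\bigl(pqt^{n-1}/b\bigr)^{|\mu|}$ with the $q^{\pm2n(\mu')}t^{\mp2n(\mu)}$ factors cancelling internally, while the $r=2$ $\Delta$-shift on the leading argument gives $\bigl(b/(pqt^{n-1})\bigr)^{|\mu|}$. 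Both cases close. Your closing remark is also apt: the cancellation happens across the $\Delta$- and $R^*$-factors rather than within either, so the balancing condition for $\Delta$-ellipticity plays no role here.
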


\section{Biorthogonal functions}\label{secbiortho}
In this section we recall the definition and basic properties of the biorthogonal functions from \cite{RainsBCn}.

\begin{definition}\label{defbiortho}
Let $t_0$, $t_1$, $t_2$, $t_3$, $u_0$, $u_1$, $q$, and $t$ be parameters such that $t^{2(n-1)}t_0t_1t_2t_3u_0u_1=pq$. We define
\begin{equation}\label{eqdefbiortho}
\tilde R_{\lambda}^{(n)} (z_i ;t_0{}:{}t_1,t_2,t_3;u_0,u_1;q,t;p) = 
\sum_{\mu \subset \lambda}
\frac{\binom{\lambda}{\mu}_{[1/u_0u_1,1/t^{n-1}t_0u_1];q,t;p} R_{\mu}^{*(n)}(z_i ;t_0,u_0;q,t;p)}
{\Delta_{\mu}^0(t^{n-1}t_0/u_0 ~|~ t^{n-1} t_0t_1, t^{n-1} t_0t_2, t^{n-1} t_0t_3,t^{n-1}t_0u_1;q,t;p)}.
\end{equation}

\end{definition}
The normalization for this definition is chosen so that the biorthogonal functions are highly invariant under shifts of the parameters. If we set $n=1$ and $\lambda=(l)$ (i.e., $\lambda$ has one part) then this definition reduces to the 
univariate biorthogonal functions of \cite{S}. 
\begin{lemma}\label{lemRtildeabel}
As functions of $z_i$ we have $\tilde R_{\lambda}^{(n)} (\ldots,z_i,\ldots ;t_0{}:{}t_1,t_2,t_3;u_0,u_1;q,t;p) \in A^{(n)}(u_0;p,q)$.
Moreover the biorthogonal functions are elliptic in the $t_r$ and $u_r$, that is, they are invariant under multiplying these parameters with integer powers of $p$ (as long as the balancing condition remains satisfied). Finally they satisfy the equations
\[
\tilde R_{\lambda}^{(n)} (z_ip^{1/2} ;t_0p^{1/2}{}:{}t_1p^{-1/2},t_2p^{-1/2},t_3p^{-1/2};u_0p^{1/2},u_1p^{1/2};q,t;p) = 
\tilde R_{\lambda}^{(n)} (z_i ;t_0{}:{}t_1,t_2,t_3;u_0,u_1;q,t;p)
\]
and
\begin{equation}\label{eqinvq}
\tilde R_{\lambda}^{(n)} (z_i; \frac{1}{t_0}{}:{} \frac{1}{t_1},\frac{p}{t_2},\frac{p}{t_3};\frac{1}{u_0},\frac{1}{u_1};\frac{1}{q},\frac{1}{t};p) = 
\tilde R_{\lambda}^{(n)} (z_i; t_0 {}:{} t_1,t_2,t_3;u_0,u_1;q,t;p).
\end{equation}
\end{lemma}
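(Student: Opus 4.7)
The proof splits into four parts, and each amounts to tracking the transformation of the three factors in \eqref{eqdefbiortho} — the binomial coefficient, the interpolation function, and the $\Delta^0_\mu$ denominator — under the shift in question. The first claim is essentially automatic: each $R_\mu^{*(n)}(z_i;t_0,u_0)$ lies in $A^{(n)}(u_0;p,q)$ by the second bulleted property listed in Section \ref{secinterpolation}, and the condition defining $A^{(n)}(u_0;p,q)$ (holomorphy of $\prod_i \theta(pq z_i^{\pm 1}/u_0;q;p)_m f$ for all sufficiently large $m$) is preserved under finite linear combinations, so the sum in \eqref{eqdefbiortho} remains in this space.

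For the ellipticity in $t_r$ and $u_r$ I would check invariance under a spanning set of balanced elementary $p$-shifts, for instance pairs such as $(t_0,u_1)\mapsto (pt_0, u_1/p)$ or $(t_i,u_1)\mapsto (pt_i, u_1/p)$. For each such shift the binomial coefficient is invariant by Proposition \ref{propbinomp}, $R_\mu^{*(n)}(z_i;t_0,u_0)$ transforms via Proposition \ref{propRstarp}, and $\Delta_\mu^0(t^{n-1}t_0/u_0 \mid \ldots)$ transforms via the shifting formulas given for $\Delta_\lambda^0$ in the preceding subsection. A direct computation, invoking the balancing identity $t^{2(n-1)}t_0 t_1 t_2 t_3 u_0 u_1 = pq$ to collapse the combined prefactor, shows the two nontrivial contributions cancel for every $\mu$, so each summand is individually invariant.

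The $p^{1/2}$-symmetry and the invert-all-parameters symmetry \eqref{eqinvq} are handled in the same style. Under the $p^{1/2}$-shift, the arguments $1/u_0 u_1$ and $1/(t^{n-1}t_0 u_1)$ of the binomial both scale by $1/p$, so the binomial is invariant by Proposition \ref{propbinomp}; the argument $a=t^{n-1}t_0/u_0$ and the three $b$-parameters $t^{n-1}t_0 t_r$ ($r=1,2,3$) of $\Delta_\mu^0$ are unchanged, while the remaining parameter $t^{n-1}t_0 u_1 \mapsto p\,t^{n-1}t_0 u_1$ produces the shift factor $(u_0/(t^{n-1}t_0 q))^{|\mu|} q^{-2n(\mu')} t^{2n(\mu)}$; and the $\sqrt p$-identity of Proposition \ref{propRstarp} supplies exactly the reciprocal factor from $R_\mu^{*(n)}$. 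For \eqref{eqinvq}, \eqref{eqbinomoverq} disposes of the binomial and \eqref{eqoverq} disposes of $R_\mu^{*(n)}$; the missing ingredient is a $(q,t)\leftrightarrow (1/q,1/t)$ identity for $\Delta_\mu^0$, which follows directly from \eqref{eqc0p} applied to each $C_\mu^0$-factor in the form $\Delta_\mu^0(a\mid b;1/q,1/t;p) = \Delta_\mu^0(1/a\mid p/b;q,t;p)$. The twofold $p$-shift on the right-hand side matches precisely the $p/t_2,p/t_3$ appearing in \eqref{eqinvq}, and the residual monomial prefactor from combining all three factors collapses to unity by the balancing condition.

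The only real obstacle is bookkeeping. Each of the three factors contributes a monomial in $q, t, t_0, u_0$ whose exponent is a linear combination of $|\mu|$, $n(\mu)$, and $n(\mu')$, and the final cancellation depends on invoking the balancing relation in exactly the right form. No conceptual difficulty arises, but careful matching of exponents summand by summand is required.
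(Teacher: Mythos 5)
Your argument matches the paper's proof: both establish membership in $A^{(n)}(u_0;p,q)$ by observing the expansion is a finite sum of functions in that space, and both verify the $p$-shift and inversion symmetries summand by summand via Propositions \ref{propRstarp} and \ref{propbinomp}, \eqref{eqc0p} (via the $\Delta^0$ shift formulas), and \eqref{eqoverq}, \eqref{eqbinomoverq}. You spell out the intermediate identity $\Delta^0_\mu(a\mid b;1/q,1/t;p)=\Delta^0_\mu(1/a\mid p/b;q,t;p)$ that the paper leaves implicit in its one-sentence treatment of \eqref{eqinvq}, and this is correct and exactly the missing ingredient.
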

\begin{proof}
The biorthogonal functions are written as sums of functions in the space $A^{(n)}(u_0;p,q)$, so they are in this space themselves. We can use \eqref{eqc0p} and Propositions \ref{propRstarp} and \ref{propbinomp} to show that the individual summands in the definition of the biorthogonal functions satisfy the given $p$-shift equations. The final equation follows from a direct calculation using \eqref{eqoverq} and \eqref{eqbinomoverq}.
\end{proof}

As suggested by their name, the biorthogonal functions satisfy a biorthogonality relation. There are two kinds of biorthogonality measures. For
generic parameters we have a continuous biorthogonality measure, while, if we specialize to $t^{n-1}t_0t_1=q^{-m}$ the continuous measure reduces to a discrete one. The discrete version can be obtained from the continuous biorthogonality by residue calculus.
The continuous version is \cite[Theorem 8.4 and 8.10]{Rainstrafo}, while the discrete version is \cite[Theorem 8.11]{Rainstrafo} or \cite[Theorem 5.8]{RainsBCn}
\begin{theorem}\label{thmbiorthogonality}
For any partitions $\lambda$ and $\kappa$ of length at most $n$, and for generic values of the parameters 
such that $t^{2(n-1)}t_0t_1t_2t_3u_0u_1=pq$ 
we have
\begin{multline*}
\langle \tilde R_{\lambda}^{(n)}(; t_0{}:{}t_1,t_2,t_3;u_0,u_1;q,t;p), \tilde R_{\kappa}^{(n)}(; t_0{}:{}t_1,t_2,t_3;u_1,u_0;q,t;p) \rangle_{t_0,t_1,t_2,t_3,u_0,u_1;q,t;p} \\=
\delta_{\lambda,\kappa} \frac{1}{\Delta_\lambda( \frac{1}{u_0u_1}~|~ t^n, t^{n-1}t_0t_1,t^{n-1}t_0t_2,t^{n-1}t_0t_3, \frac{1}{t^{n-1}t_0u_0}, \frac{1}{t^{n-1}t_0u_1};q,t;p)},
\end{multline*}
where 
\begin{multline*}
\langle f,g\rangle_{t_0,t_1,t_2,t_3,t_4,t_5:q,t;p} = 
\frac{(q;q)^n (p;p)^n \Gamma(t;p,q)^n}{2^n n! \prod_{j=1}^n\Gamma(t^j;p,q) \prod_{0\leq r<s\leq 5} \Gamma(t^{n-j} t_rt_s;p,q)} \\ \times 
\int_{C^n} f(\cdots,z_i,\cdots) g(\cdots,z_i,\cdots) \prod_{1\leq j<k\leq n} \frac{\Gamma(tz_j^{\pm 1} z_k^{\pm 1};p,q)}{\Gamma(z_j^{\pm 1}z_k^{\pm 1};p,q)}
\prod_{j=1}^n \frac{\prod_{r=0}^5 \Gamma(t_rz_j^{\pm 1};p,q)}{\Gamma(z_j^{\pm 2};p,q)} \frac{dz_j}{2\pi i z_j},
\end{multline*}
for parameters such that $t^{2(n-1)}t_0t_1t_2t_3t_4t_5=pq$ and functions $f\in A^{(n)}(t_4;p,q)$ and $g \in A^{(n)}(t_5;p,q)$.
Let $m_f$ be such that $f(z_i) \prod_i \Gamma(t_4z_i^{\pm 1})/\Gamma(t_4q^{-m_f} z_i^{\pm 1})$ is holomorphic, and define $m_g$ likewise for $g$. Let $\tilde t_r=t_r$ for $0\leq r\leq 3$ and $\tilde t_4 = t_4q^{-m_f}$ and $\tilde t_5 =t_5q^{-m_g}$. The contour is now taken such that $C=C^{-1}$, contains all points of the form $p^i q^j \tilde t_r$ (for $i,j\geq 0$) (and hence excludes their reciprocals), and contains the contours $p^i q^j tC$ for $i,j \geq 0$ \footnote{To be precise, $C$ should be a chain representing the described homology class.}. 

If moreover $t_0t_1=q^{-m}t^{1-n}$, and thus $t^{n-1}t_2t_3u_0u_1=pq^{m+1}$ (implying no contour of the desired shape exists) we define the inner product as 
\begin{multline*}
\langle f,g\rangle_{t_0,t_1,t_2,t_3,u_0,u_1:q,t;p}
 =
\sum_{\mu \subset m^n} 
f(t_0t^{n-i} q^{\mu_i}) g(t_0t^{n-i} q^{\mu_i}) 
\\ \times \frac{\Delta_\mu(t^{2(n-1)} t_0^2~|~ t^n, t^{n-1}t_0t_1,t^{n-1}t_0t_2, t^{n-1}t_0t_3, t^{n-1}t_0u_0,t^{n-1}t_0u_1;q,t;p)}
{\Delta_{m^n}^0(\frac{t^{n-1} t_1}{u_0} ~|~ \frac{t_1}{t_0}, \frac{pq}{u_0t_2}, \frac{pq}{u_0t_3}, \frac{pq}{u_0u_1};q,t;p)}
\end{multline*}
and have the same biorthogonality.
\end{theorem}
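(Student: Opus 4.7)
The theorem is established in \cite{Rainstrafo,RainsBCn}, so the most economical approach is to reduce to those results by verifying that the biorthogonal functions of Definition \ref{defbiortho} coincide with those in the cited papers. Since the interpolation functions of this paper have already been matched (via Corollary 4.5 and Theorem 4.16 of \cite{RainsBCn}, as noted after their definition), this reduces to tracking the normalization factors in \eqref{eqdefbiortho} against the normalization used in \cite{RainsBCn}; the asserted biorthogonality and explicit norm evaluation then transfer directly, along with the inner product formulas in both their continuous and discrete incarnations.

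A from-scratch argument for the continuous biorthogonality would proceed by induction on $n$. The base case ($n=0$, or equivalently $\lambda=\kappa=\emptyset$) reduces to the $BC_n$ elliptic Selberg integral evaluation. For the inductive step, the main tool is an integral raising operator that intertwines $\tilde R^{(n)}_\lambda$ with $\tilde R^{(n+1)}_\lambda$ up to a shift of parameters and is formally self-adjoint with respect to the displayed inner product; applying it on both sides of the pairing reduces the dimension by one, and after all variables have been processed the diagonal case $\lambda=\kappa$ produces the $\Delta_\lambda$-factor appearing on the right-hand side. Off-diagonal orthogonality $\lambda\neq\kappa$ is then deduced by combining the triangular expansion \eqref{eqdefbiortho} of the biorthogonal functions in the interpolation basis with the vanishing of the interpolation functions $R^{*(n)}_\mu$ at geometric sequences indexed by partitions not dominating $\mu$ (the complementary vanishing to Proposition \ref{propinterpoleval}).

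The discrete version at $t_0t_1=q^{-m}t^{1-n}$ arises by a contour pinching argument: under this specialization the sequence $p^iq^jt_0$ that must lie inside $C$ collides with its reciprocal sequence $p^iq^{j+m}t_0^{-1}$ that must lie outside, so no contour of the required homology class exists; deforming from generic parameters forces the integral to collapse to a residue sum at the points $z_i=t_0t^{n-i}q^{\mu_i}$ for $\mu\subset m^n$, and a direct residue calculation using the balancing condition $t^{n-1}t_2t_3u_0u_1=pq^{m+1}$ produces the stated $\Delta_\mu/\Delta_{m^n}^0$ weights.

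The main obstacle in a from-scratch proof is the off-diagonal orthogonality. The raising-operator induction handles the diagonal evaluation cleanly, but showing that distinct biorthogonal functions pair to zero requires combining the triangular structure of \eqref{eqdefbiortho} with rather delicate vanishing properties of the interpolation functions and a careful analysis of how the binomial-coefficient expansion interacts with the pairing. In practice it is easier to first establish the discrete orthogonality (where the inner product is a finite sum and triangularity arguments apply directly, making the Gram matrix manifestly triangular) and then to analytically continue in the parameters to the generic continuous case, rather than attempting the continuous case by itself.
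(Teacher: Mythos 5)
Your first paragraph captures the paper's actual approach: Theorem \ref{thmbiorthogonality} is stated by direct citation to \cite[Theorems 8.4, 8.10, 8.11]{Rainstrafo} and \cite[Theorem 5.8]{RainsBCn}, after the paper has identified its (recursively defined) interpolation and biorthogonal functions with those of the cited works, so the properties transfer verbatim. The remaining paragraphs outline a from-scratch route that is broadly consistent with how the cited references proceed, but the paper gives no such argument itself, so the citation and notation-matching in your opening paragraph is the relevant match.
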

Note that using $\lambda=\kappa=0$ we find that the inner products are normalized such that $\langle 1,1\rangle=1$. 

The definition gives us an expansion of $\tilde R_{\lambda}^{(n)}$ in terms of the interpolation functions $R_{\mu}^{*(n)}(;t_0,u_0)$.
This is not the only basis for $A^{(n)}(u_0)$, indeed $R_{\mu}^{*(n)}(;v,u_0)$ is such a basis for all values of $v$. 
It turns out we need expansions in these different bases of interpolation functions as well, as some limits of interpolation functions are independent of $z_i$ (so we do not want to expand into those). It is convenient to first recall the 
definition of $\Omega_{\lambda/\kappa}$ from \cite{RainsBCn}.

\begin{definition}\label{defomega}
We define the coefficients $\Omega_{\lambda/\kappa}$ as 
\begin{multline*}
\Omega_{\lambda/\kappa}(a,b;v_1,v_2,v_3,v_4;q,t;p) \\ = 
\sum_{\kappa \subset \mu \subset \lambda} 
\binom{\lambda}{\mu}_{[pqa^2,pqab]} \binom{\mu}{\kappa}_{[\frac{a}{b},\frac{abpq }{v_1v_2v_3v_4}]}
\frac{\Delta_{\lambda}^0(pqa^2~|~ pqab) \Delta_\mu^0(\frac{a}{b}~|~ \frac{abpq}{v_1v_2v_3v_4})}
{\Delta_{\mu}^0(\frac{a}{b}~|~ \frac{1}{pqab}) \Delta_{\kappa}^0(\frac{v_1v_2v_3v_4}{b^2 pq}~|~ \frac{v_1v_2v_3v_4}{abpq})}
\prod_{r=1}^4 \frac{  C_{\lambda}^0(\frac{pqa}{v_r}) C_{\mu}^0(\frac{v_r}{b}) }{ C_{\mu}^0(\frac{pqa}{v_r}) C_{\kappa}^0(\frac{v_r}{b})}
\end{multline*}
\end{definition}
Univariately $\Omega_{\lambda/\kappa}$ corresponds to a very well poised series ${}_{12}V_{11}$. 
In \cite{RainsBCn} it was shown that these coefficients satisfy a Weyl group of type $D_4$ symmetry:
\begin{theorem}
$\Omega_{\lambda/\kappa}(a,b;v_1,v_2,v_3,v_4;q,t;p) $ is symmetric under permutations of $v_1$, $v_2$, $v_3$ and $v_4$ and satisfies the equation
\[
\Omega_{\lambda/\kappa}(a,b;v_1,v_2,v_3,v_4;q,t;p) =
\Omega_{\lambda/\kappa}(a,\frac{b}{v_3v_4};v_1,v_2,1/v_3,1/v_4;q,t;p) 
\]
Moreover we have
\[
\Omega_{\lambda/\kappa}(a,b;v_1,v_2,v_3,v_4;q,t;p) =
\Omega_{\lambda/\kappa}(-a,-b;-v_1,-v_2,-v_3,-v_4;q,t;p) 
\]
\end{theorem}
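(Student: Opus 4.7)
The plan is to address the three claims in sequence; the first two (permutation and sign symmetry) are manifest from Definition \ref{defomega}, while only the third is substantive.

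For the permutation symmetry in $v_1,\ldots,v_4$, inspect the summand and note that every factor belongs to one of three types: (i) independent of the $v_r$ (the binomial $\binom{\lambda}{\mu}_{[pqa^2,pqab]}$, the factor $\Delta_\lambda^0(pqa^2|pqab)$, and the denominator $\Delta_\mu^0(a/b|1/(pqab))$); (ii) dependent on the $v_r$ only through the product $v_1v_2v_3v_4$ (the binomial $\binom{\mu}{\kappa}_{[a/b,abpq/(v_1v_2v_3v_4)]}$, the $\Delta_\mu^0$ numerator involving $abpq/(v_1v_2v_3v_4)$, and the $\Delta_\kappa^0$ denominator); or (iii) the manifestly $S_4$-invariant product $\prod_{r=1}^{4} C_\lambda^0(pqa/v_r)C_\mu^0(v_r/b)/(C_\mu^0(pqa/v_r)C_\kappa^0(v_r/b))$. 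Hence each $\mu$-summand is $S_4$-invariant, and so is $\Omega$. The sign symmetry is likewise immediate: under $(a,b,v_r)\mapsto(-a,-b,-v_r)$, every argument of every $\theta$-function in the summand is either a ratio of sign-flipped pairs ($a/b$, $pqa/v_r$, $v_r/b$) or a product with an even number of sign-flipped factors ($a^2$, $ab$, $v_1v_2v_3v_4$, $abpq/(v_1v_2v_3v_4)$, $v_1v_2v_3v_4/(b^2pq)$), and hence unchanged.

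For the substantive identity $\Omega(a,b;v_1,v_2,v_3,v_4)=\Omega(a,b/(v_3v_4);v_1,v_2,1/v_3,1/v_4)$, my plan is to reduce to the corresponding result in \cite{RainsBCn}. There $\Omega_{\lambda/\kappa}$ is characterized as the connection coefficient appearing in a two-sided expansion of a certain product involving the interpolation functions $R_{\mu}^{*(n)}$, and the desired transformation reflects two different expansions of that same product, related by inverting $v_3$ and $v_4$ and applying the compensating shift $b\mapsto b/(v_3v_4)$ that preserves the balancing of parameters (note for instance that $abpq/(v_1v_2v_3v_4)$ becomes $abpq/(v_1v_2)$ under the substitution). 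The main obstacle, were one to attempt a direct combinatorial proof by induction on $|\lambda|-|\kappa|$, is the non-factorization of the summand in $\mu$: the $\binom{\mu}{\kappa}$, $\Delta_\mu^0$, and $\Delta_\kappa^0$ factors couple the $\mu$-dependence to $b$ and to the $v_r$ simultaneously, so no term-by-term identification of the two sides is possible. The identity is best viewed as a multivariate elliptic analogue of the Frenkel--Turaev transformation of the very-well-poised ${}_{12}V_{11}$, and the cleanest proof goes through the interpolation-function machinery of \cite{RainsBCn}.
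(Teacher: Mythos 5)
Your verification of the $S_4$-symmetry and the sign symmetry by inspection of Definition \ref{defomega} is correct: every $v_r$-dependence in the summand factors either through the symmetric product $v_1v_2v_3v_4$ or through the manifestly symmetric product $\prod_{r=1}^4 C_\lambda^0(pqa/v_r)C_\mu^0(v_r/b)/(C_\mu^0(pqa/v_r)C_\kappa^0(v_r/b))$, and every argument appearing is even in the simultaneous sign flip. For the substantive $v_3,v_4$-inversion you correctly identify that a term-by-term matching fails and defer to \cite{RainsBCn}; that is exactly what the paper does — it states the theorem with only a citation to \cite{RainsBCn} and gives no proof of any of the three claims. So your treatment matches the paper's approach while being slightly more explicit about the two easy parts.
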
 
There also exists an evaluation formula for the $\Omega_{\lambda/\kappa}$ if
the product of two $v$-parameters equals $abpq$.
This is the bulk difference equation \cite[Theorem 4.1]{RainsBCn}.
\begin{theorem}\label{thmomegaeval}
We have
\[
\Omega_{\lambda/\kappa}(a,b~|~v_1,v_2,x, \frac{abpq}{x}) =
\binom{\lambda}{\kappa}_{[pq a^2, \frac{pqab}{v_1v_2}]}
\frac{C_{\lambda}^0(\frac{pqab}{v_1v_2},pqav_1,pqav_2,\frac{x}{b}, \frac{pqa}{x}) C_{\kappa}^0(p^2q^2a^2)}
{C_{\lambda}^0(\frac{pqav_1v_2}{b}) C_{\kappa}^0(pqav_1,pqav_2, \frac{v_1v_2}{pqab}, \frac{x}{b},\frac{apq}{x})}.
\]
\end{theorem}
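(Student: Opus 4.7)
My plan is to treat Theorem \ref{thmomegaeval} as a multivariate elliptic Jackson summation: when the balance $v_3 v_4 = abpq$ holds, the defining sum for $\Omega_{\lambda/\kappa}$ collapses to a single product. The univariate case (when $\lambda$ and $\kappa$ each have length at most one) is the Frenkel--Turaev summation of an elliptic very-well-poised ${}_{12}V_{11}$, and the task is to lift this to the multivariate setting using the structural properties of $\Omega$ already established.

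First I would substitute $(v_3,v_4)=(x,abpq/x)$ in Definition \ref{defomega} and view both sides as meromorphic functions of $x$. The $x$-dependence of the right-hand side is visibly $C_\lambda^0(x/b)\,C_\lambda^0(pqa/x)/[C_\kappa^0(x/b)\,C_\kappa^0(apq/x)]$ times $x$-independent factors; this is a specific theta-function ratio invariant under $x \leftrightarrow abpq/x$. Inspection of Definition \ref{defomega} shows that each summand of the left-hand side produces the same $x$-dependence times additional $x$-independent ratios, so the left-hand side lies in the same finite-dimensional space of theta functions in $x$. In particular, it suffices to check equality at finitely many values of $x$.

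The core step is to identify enough special values of $x$ at which both sides evaluate in closed form. The natural choices are the principal specializations $x = b\,q^{\kappa_i} t^{n-i}$ and $x = b\,q^{\lambda_i} t^{n-i}$. At $x = b\,q^{\kappa_i} t^{n-i}$ the appropriate $\theta$-factor in $\binom{\mu}{\kappa}_{[a/b,1/(v_1v_2)]}$ should force all terms with $\mu \neq \kappa$ to vanish, collapsing the sum to a single summand which one checks against the right-hand side using Proposition \ref{propinterpoleval}. A symmetric argument at $x = b\,q^{\lambda_i} t^{n-i}$ forces $\mu = \lambda$. Combined with the $D_4$ symmetry of $\Omega$ (which supplies further evaluation points via $x \leftrightarrow 1/x$ and permutations), these give enough data to pin down the two theta functions in $x$.

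The main obstacle will be the vanishing analysis in Definition \ref{defbinon}: the split into $\lambda_j' = \mu_j'$ versus $\lambda_j' \neq \mu_j'$ puts the relevant $\theta$-factor in different products, and one must verify that at each principal specialization only the claimed $\mu$ survives. I expect this to require a careful combinatorial argument on the boxes of $\lambda/\mu$ and $\mu/\kappa$, in the spirit of the vanishing arguments underlying Proposition \ref{propinterpoleval}, combined with the univariate elliptic Jackson summation to identify the surviving coefficient. Once these vanishing claims are established, the theorem follows from the equality of two theta functions in $x$ that agree at sufficiently many points.
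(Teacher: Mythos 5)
The paper itself offers no proof of Theorem~\ref{thmomegaeval}: it simply records that this is \cite[Theorem~4.1]{RainsBCn} (``the bulk difference equation'') and inherits the result from that reference. So there is no internal argument to compare against; what matters is whether your proposed route would actually prove the statement.

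There is a genuine gap, and it appears at the very first step. Once you impose the balance and set $(v_3,v_4)=(x,abpq/x)$ in Definition~\ref{defomega}, the $x$-dependence of \emph{every} summand becomes identical. Concretely, for $r=3,4$ one has $\tfrac{pqa}{v_3}=\tfrac{pqa}{x}$, $\tfrac{v_3}{b}=\tfrac{x}{b}$, $\tfrac{pqa}{v_4}=\tfrac{x}{b}$, $\tfrac{v_4}{b}=\tfrac{apq}{x}$, so
\[
\prod_{r=3,4}\frac{C_\lambda^0(\tfrac{pqa}{v_r})\,C_\mu^0(\tfrac{v_r}{b})}{C_\mu^0(\tfrac{pqa}{v_r})\,C_\kappa^0(\tfrac{v_r}{b})}
=\frac{C_\lambda^0(\tfrac{pqa}{x},\tfrac{x}{b})}{C_\kappa^0(\tfrac{pqa}{x},\tfrac{x}{b})},
\]
with the $\mu$-dependent factors cancelling outright. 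Every other factor of the summand, including the binomial coefficient $\binom{\mu}{\kappa}_{[a/b,\,1/(v_1v_2)]}$ and the $\Delta^0$'s, is manifestly $x$-independent. Hence the entire left-hand side is an $x$-independent sum over $\mu$ times the same $x$-dependent prefactor that appears on the right-hand side. The theorem is therefore \emph{equivalent} to an $x$-free summation identity, and nothing about it can be extracted by comparing theta functions in $x$ or by evaluating at the points $x=bq^{\kappa_i}t^{n-i}$ or $x=bq^{\lambda_i}t^{n-i}$: substituting any $x$ just reproduces the same unproved identity, and cannot ``force all terms with $\mu\neq\kappa$ to vanish,'' because $x$ does not enter the binomial coefficient or any $\mu$-dependent factor. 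The central step of your plan, the vanishing analysis under $x$-specialization, has no mechanism to operate.

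What is actually required is the multivariate elliptic Jackson summation itself, i.e.\ a proof that the $x$-independent sum over $\kappa\subset\mu\subset\lambda$ telescopes to the claimed product. That is genuinely the hard content, and it does not follow from finite-dimensionality in $x$. A workable route would have to exploit a parameter in which the summand \emph{does} vary nontrivially (e.g.\ one of $v_1$, $v_2$, or an induction on $\lambda/\kappa$), or else rely on the interpolation-theoretic machinery Rains uses in \cite{RainsBCn}, where the identity is obtained from the action of difference operators on the interpolation functions rather than from a theta-function counting argument.
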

Finally we would like to observe that
\[
\Omega_{\lambda/\kappa}(a,b;v_1,v_2,v_3,v_4;q,t;p) \frac{ \Delta_{\kappa}^0(\frac{v_1v_2v_3v_4}{b^2 pq}~|~ \frac{v_1v_2v_3v_4}{abpq})}{\Delta_{\lambda}^0(pqa^2~|~ pqab) }
\prod_{r=1}^4 \frac{  C_{\kappa}^0(\frac{v_r}{b})}{  C_{\lambda}^0(\frac{pqa}{v_r})  }
\]
is elliptic in $a$, $b$, $v_r$, i.e invariant under $a\to ap$, $b\to bp$, $v_1\to v_1p$, etc.

We can get other expansions for the biorthogonal functions, by using the definition and then expanding $R^{*(n)}(;t_0,u_0)$ in $R^{*(n)}(;v,u_0)$ (for any $v$ of our choice) using \cite[Corollary 4.14]{RainsBCn}. This gives the equation
\begin{multline}\label{eqbiorthousingomega}
\tilde R_{\lambda}^{(n)}(;t_0{}:{}t_1,t_2,t_3;u_0,u_1;q,t;p) = 
\frac{C_{\lambda}^0(\frac{pqt^{n-1}t_0}{u_0})}{C_{\lambda}^0(\frac{pq}{vu_0}, t^{n-1}t_0t_1,t^{n-1}t_0t_2,t^{n-1}t_0t_3, \frac{1}{t^{n-1}t_0u_1})} \\ \times 
\sum_{\nu \subset \lambda}
\Omega_{\lambda/\nu}(s, \frac{u_0s}{t^{n-1}t_0 }; 
\frac{pqs}{t_0t_1t^{n-1}}, \frac{pqs}{t_0t_2t^{n-1}}, \frac{pqs}{t_0t_3t^{n-1}}, u_0vs;q,t;p)
\\ \times 
C_\nu^0( \frac{pq}{u_0t_0}, \frac{pq}{u_0t_1}, \frac{pq}{u_0t_2}, \frac{pq}{u_0t_3})
R_{\nu}^{*(n)}(;v,u_0),
\end{multline}
where $s=1/\sqrt{pq u_0u_1}$. Note that the summation over $\nu$ (without the $C_{\lambda}^0$ terms in front) is permutation 
symmetric in $t_0$, $t_1$, $t_2$ and $t_3$ (using the symmetries of $\Omega_{\lambda/\nu}$); in particular this shows that the biorthogonal functions themselves are symmetric under the exchange of $t_0$ and $t_1$ (or just any permutation of $(t_0,t_1,t_2,t_3)$), as long as we multiply by the right product of $C_{\lambda}^0$'s. The current normalization is such that 
\begin{equation}\label{eqnormR}
\tilde R_{\lambda}^{(n)} (t^{n-i} t_0;t_0:t_1,t_2,t_3;u_0,u_1) = 1.
\end{equation}
Together with the permutation symmetry in the $t_r$ this also gives us evaluations for $\tilde R_{\lambda}^{(n)}(t^{n-i}t_r)$ for $r=1,2,3$. 

For some values of $v$ we can evaluate the $\Omega_{\lambda/\nu}$ factors in this expansion, not surprisingly these
are $v=t_r$ ($r=0,1,2,3$), which lead to the original expansion (if $v=t_0$), and versions of that one using the symmetry in 
the $t_r$.

We would like to mention a few results on these biorthogonal functions, which are essentially algebraic equations in our space of formal power series in $p$. Thus it should be relatively straightforward to obtain limits of these equations. However we have not yet completed the full study of obtaining these limits.

We begin with the following evaluation duality \cite[Theorem 5.4]{RainsBCn}, which is the analog of Macdonald's evaluation duality conjecture. Once we have obtained the limits of the biorthogonal functions the relation will reduce to duality relations for our limiting biorthogonal functions. 
\begin{equation}\label{eqduality}
\tilde R_{\lambda}^{(n)}(t_0t^{n-i}q^{\kappa_i};t_0:t_1,t_2,t_3;u_0,u_1) = 
\tilde R_{\kappa}^{(n)}(\hat t_0t^{n-i}q^{\lambda_i};\hat t_0:\hat t_1,\hat t_2,\hat t_3;\hat u_0,\hat u_1),
\end{equation}
where the new parameters are given by 
\[
\hat t_0 = \sqrt{t_0t_1t_2t_3/pq}, \qquad 
\hat t_0\hat t_r =t_0t_r, \quad (r=1,2,3), \qquad 
\hat t_0/\hat u_r = t_0/u_r, \quad (r=0,1).
\]
Notice that we need the valuation of $z$ to equal the valuation of $t_0$ (i.e. $\zeta=\alpha_0$) for the limit to work (though by permutation symmetry in the $t_r$'s and the $z\leftrightarrow 1/z$ symmetry we have a few more choices). Also observe that the case $\kappa=0$ is exactly the normalization equation.

Let us define a difference operator  \cite[(6.18)]{Rainstrafo}
\begin{align*}
D_q^{(n)}(v;t,p) f(z_i) := \sum_{\sigma \in \{\pm 1\}^n}
\frac{\prod_{r=0}^3 \theta(v_rz_i^{\sigma_i};p)}{\theta(z_i^{2\sigma_i};p)}
\prod_{1\leq i<j\leq n} \frac{\theta(tz_i^{\sigma_i} z_j^{\sigma_j};p)}{\theta(z_i^{\sigma_i}z_j^{\sigma_j};p)} f(q^{\sigma_i/2}z_i).
\end{align*}
It is easily shown that if $t^{n-1}v_0v_1v_2v_3=p$ this difference operators maps the space of  $BC_n$-symmetric abelian functions to itself. We can rescale this operator to
\[
D_q^{(n)}(v_0,v_1,v_2;t,p) f = \frac{D_q^{(n)}(v_0,v_1,v_2, p/t^{n-1}v_0v_1v_2;t,p)f}{\prod_{i=1}^{n} 
\theta(t^{n-i} v_0v_1,t^{n-i} v_0v_2,t^{n-i} v_1v_2;p)}
\]
so that we obtain the difference equation
\[
D_q^{(n)}(u_0,t_0,t_1;t,p) \tilde R_{\lambda}^{(n)}(\cdot; q^{1/2} t_0: q^{1/2}t_1,q^{-1/2}t_2,q^{-1/2}t_3;q^{1/2}u_0,q^{-1/2}u_1) = 
\tilde R_{\lambda}^{(n)}(\cdot;t_0:t_1,t_2,t_3;u_0,u_1).
\]

In the same vein we can define
\[
D_q^{-(n)}(u_0;t,p) = D_q^{(n)}(u_0,qu_0,p/u_0, 1/t^{n-1}u_0q;t,p)
\]
and we set 
\begin{align*}
D_q^{+(n)}(v_0:v_1:v_2,v_3,v_4;t,p) f(z_i) &= 
\prod_{i=1}^n \frac{\theta(pqt^{n-i}v_1/v_0;p)}{\prod_{2\leq r\leq 5} \theta(v_rt^{n-i}v_1;p)}
\\ & \qquad \times \sum_{\sigma \in \{\pm 1\}^n} \prod_{i=1}^n \frac{\prod_{r=1}^5 \theta(v_r z_i^{\sigma_i};p)}{\theta(pqz_i^{\sigma_i}/v_0, z_i^{2\sigma_i};p)}
\prod_{1\leq i<j\leq n} \frac{\theta(tz_i^{\sigma_i}z_j^{\sigma_j};p)}{\theta(z_i^{\sigma_i}z_j^{\sigma_j};p)}f(q^{\sigma_i/2}z_i),
\end{align*}
where $v_5$ is determined by the equation $t^{n-1}\prod_{r=0}^5 v_r= p^2q$.
Acting on the biorthogonal functions these operators give the equations
\[
D_q^{+(n)}(u_0:t_0:t_1,t_2,t_3) \tilde R_{\lambda}^{(n)}(\cdot;
q^{1/2}t_0:q^{1/2}t_1,q^{1/2}t_2,q^{1/2}t_3;q^{-1/2}u_0,q^{-3/2}u_1)
= \tilde R_{\lambda+ 1^n}^{(n)}(\cdot; t_0:t_1,t_2,t_3;u_0,u_1)
\]
and
\begin{align*}
D_q^{-(n)}&(u_0)  \tilde R_{\lambda+1^n}^{(n)}(\cdot;q^{-1/2}t_0:q^{-1/2}t_1,q^{-1/2}t_2,q^{-1/2}t_3;q^{3/2}u_0,q^{1/2}u_1) \\ & = 
\prod_{i=1}^n \frac{\prod_{r=0}^3 \theta(t^{n-i}u_0t_r;p) \theta(\frac{qu_0}{t_0}t^{i-n},\frac{t^{n-i}}{u_1t_0}, \frac{u_0}{t_0}t^{i-n} ,\frac{t^{n-i}q^{\lambda_i-1}}{u_0u_1}, t^{i-n}q^{-\lambda_i-1};p)}
{\prod_{r=1}^3 \theta(t^{n-i}t_0t_r/q;p) \theta(
\frac{t^{i+1-2n}}{t_0u_1}, \frac{1}{u_1t_0} t^{n-i}q^{\lambda_i},  \frac{u_0}{t_0}q^{-\lambda_i}t^{i-n} ;p)}
\tilde R_{\lambda}^{(n)}(\cdot; t_0:t_1,t_2,t_3;u_0,u_1).
\end{align*}

\section{Limits of interpolation functions and binomial coefficients}\label{secliminterpolation}
In this section we discuss the limits of interpolation functions and binomial coefficients. This section only contains the methodology and a description of the results. The explicit calculations and expressions for the explicit limits we obtain are relegated to Appendix \ref{apA}. 

First we wish to remark that the interpolation functions, and therefore also the binomial coefficients, are elements of the space $M(z_i,a,b,q,t)$, respectively $M(a,b,q,t)$, as explained in Section \ref{secnot}. In particular finding their limits consists of determining the leading coefficients, and convergence is then automatically uniform on compacta in $\mathbb{C}^*$ outside the zero set of a polynomial. 

Interpolation functions are defined using the branching rule \eqref{eqbranch}, which is 
\begin{equation*}
R_{\lambda}^{*(n+1)}( \ldots, z_i, \ldots, v;a,b;q,t;p)  = \sum_{\kappa: \kappa \prec' \lambda} c_{\lambda,\kappa}
R_{\kappa}^{*(n)}(\ldots,z_i,\ldots ;a,b;q,t;p),
\end{equation*}
so in order to obtain their limit we first need to find the limit of the coefficients $c_{\lambda,\kappa}$, which were given by \eqref{eqdefclk}. As we already know the limits of the $C_{\lambda}^0$'s and $C_{\kappa}^0$'s appearing, we can write down this limit as soon as we know the limit of $\binon{\lambda}{\kappa}_{[a,t];q,t;p}$. As $\binon{\lambda}{\kappa}$ is given as a product of theta functions, that limit is immediate.

It turns out that the valuation of $c_{\lambda,\kappa}(vp^{\zeta};ap^{\alpha},bp^{\beta})$ 
is always of the form $x (|\lambda|-|\kappa|)$, where $x=x(\zeta,\alpha,\beta)$ is an explicit piecewise linear function. In particular, an immediate induction shows that the valuation of $R_{\lambda}^{*(n)}(z_ip^{\zeta};ap^{\alpha},bp^{\beta})$ equals $x |\lambda|$ (assuming no cancellation occurs), and the limit of $R_{\lambda}^{*(n)}$ can be defined recursively using a branching rule \eqref{eqbranch}, where we use the limits of $c_{\lambda,\kappa}$ instead of $c_{\lambda,\kappa}$ themselves. 

So far, we have discussed the limit of $R_{\lambda}^{*(n)}(z_ip^{\zeta};ap^{\alpha},bp^{\beta})$ for a given $(\alpha,\beta,\zeta)\in \mathbb{R}^3$. However, as there are infinitely many such vectors, we now want to bring some order in these different limits, and show that in fact there are only finitely many essentially different limits. 

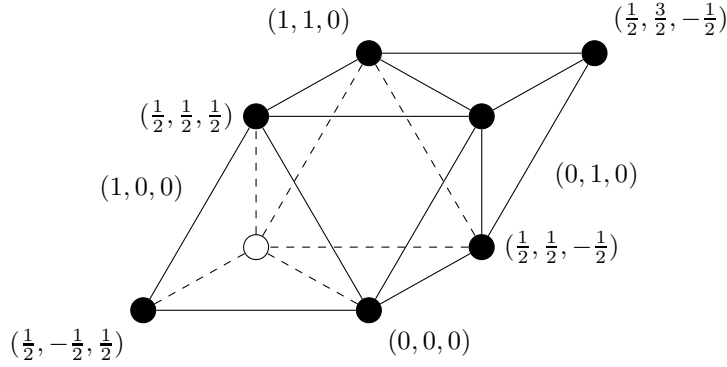
\begin{figure}
\begin{center}

\begin{tikzpicture}[scale=0.6,vol/.style={circle, fill, draw=black},leeg/.style={circle, minimum size=4pt, draw=black}]

\node[label=below left:{$(\frac12,-\frac12,\frac12)$}] (pmp) at (0,0) [vol] {};
\node[label=below right:{$(0,0,0)$}] (nnn) at (5,0) [vol] {};
\node[label=left:{$(\frac12,\frac12,\frac12)$}] (ppp) at (2.5,4.3) [vol] {} ;
\node[label=above left:{$(1,1,0)$}] (een) at (5,5.7) [vol] {} ;
\node[label=right:{$(\frac12,\frac12,-\frac12)$}] (ppm) at (7.5,1.4) [vol] {} ;
\node[label=above right:{$(\frac12,\frac32,-\frac12)$}] (pdm) at (10,5.7) [vol] {} ;
\begin{scope}[label distance=8mm]
\node[label=150:{$(1,0,0)$}] (enn) at (2.5,1.4) [leeg] {} ;
\node[label=330:{$(0,1,0)$}] (nen) at (7.5,4.3) [vol] {} ;
\end{scope}



\draw (pmp) -- (nnn) -- (ppm) -- (pdm) -- (een) -- (ppp) -- (pmp) ;
\draw (ppp) -- (nnn) ;
\draw (ppm) -- (nen) -- (een) ;
\draw (ppp) -- (nen) -- (nnn) ;
\draw (nen) -- (pdm) ;
\draw[dashed] (pmp) -- (enn) -- (ppp) ;
\draw[dashed] (nnn) -- (enn) -- (een) -- (ppm) -- (enn) ;

\end{tikzpicture}

\caption{The fundamental domain including its partition in a octahedron and two tetrahedra. The points are labeled by the values of $(\alpha,\beta,\zeta)$. 
The difference between solid lines and dots on the one hand, and dashed lines and open dots on the other, is that the former would be visible were the parallelepiped solid, while the latter would be invisible.}\label{fig1}

\end{center}

\end{figure}

Recall the $p$-shifts in the arguments of $R_{\lambda}^{*(n)}$ as given in Proposition \ref{propRstarp}, and the fact that $R_{\lambda}^{*(n)}$ is $p$-abelian in the $z_i$-parameters. As a consequence 
we can rewrite $R_{\lambda}^{*(n)}(z_ip^{\zeta};ap^{\alpha},bp^{\beta})$ for any $(\alpha,\beta,\zeta)$, as an explicit power of $a$, $b$, $q$, $t$ and $p$ times an interpolation function with parameters $(\hat \alpha,\hat \beta,\hat \zeta)$ in some fundamental domain of the group $G$ of shifts along the lattice generated by 
$(1,0,0)$, $(0,1,0)$ and $(\frac12,\frac12,\frac12)$. In particular, up to a simple multiplicative factor, all the limits of the interpolation functions, are limits of interpolation functions with $(\alpha,\beta,\zeta)$ in such a fundamental domain. Let us fix this fundamental domain as the parallelepiped $0\leq \alpha\pm \zeta,\beta+\zeta\leq 1$. 

Now recall that the leading coefficient of $C_{\lambda}^0(p^{\chi}x)$ only depends on $\chi$ via the component containing $\chi$ in the decomposition $\mathbb{R}=\mathbb{Z} \cup \bigcup_{n\in \mathbb{Z}} (n,n+1)$. The same property holds (by inspection) for $\binon{\lambda}{\kappa}_{[ap^{\chi},t]}$. Now we immediately see that the leading coefficient of $c_{\lambda,\kappa}$ only depends on where the numbers $(\alpha-\beta,\alpha+\zeta,\alpha-\zeta,\beta+\zeta,\beta-\zeta)$ are with respect to this partition of $\mathbb{R}$. In particular, if we cut $\mathbb{R}^3$ by the hyperplanes $\alpha-\beta\in \mathbb{Z}$, $\alpha\pm \zeta \in \mathbb{Z}$ and $\beta\pm \zeta \in \mathbb{Z}$, the leading coefficient of $R_{\lambda}^{*(n)}(z_ip^{\zeta};ap^{\alpha},bp^{\beta})$ only depends on where  the vector $(\alpha,\beta,\zeta)$ is located with respect to these hyperplanes. 

These hyperplanes cut space in a tessellation of square pyramids and tetrahedra, where the square pyramids pair up into octahedra, such that around each octahedron we only find tetrahedra and vice versa. Our fundamental domain consists of one octahedron (i.e. two square pyramids) and two of its neighboring tetrahedra, see Figure \ref{fig1}. The limit now only depends on which open polytope (i.e. vertex, edge without vertex, triangle/square without edges or interior of tetrahedron/pyramid) in this tessellation contains the vector $(\alpha,\beta,\zeta)$. 
Of course two polytopes related by a shift from the translation group $G$ will provide the same limits (up to an explicit power of $a$, $b$, $q$ and $t$). 

So in principle we have 1 limit associated to a vertex (the group $G$ acts transitively on the vertices of this partition of $\mathbb{R}^3$), 6 to an edge, 8 to a triangle, 1 to a square, 2 to a tetrahedron, and 2 to a square pyramid. We can use symmetries of the interpolation function we have not yet used to cut back this number somewhat. 

First of all the $z_i\to 1/z_i$ symmetry allows us to identify limits corresponding to vectors
$(\alpha,\beta,\zeta)$ and $(\alpha, \beta,-\zeta)$, i.e. limits which are related by a reflection in the plane $\zeta=0$. Secondly the $(a,b;q,t) \to (1/a,1/b;1/q,1/t)$ symmetry \eqref{eqoverq}, allows us to relate the limits at $(\alpha,\beta,\zeta)$ and $(-\alpha,-\beta,\zeta)$, i.e. limits related by a rotation of $180^{\circ}$ around the $\zeta$-axis. In practice we will often prefer not to use the last symmetry, as it breaks the condition $|q|<1$, which is necessary in the measures (though not for defining the interpolation functions).

Writing down the limits explicitly we find that the limits associated to vectors inside octahedra (i.e. either the square, or one of the two pyramids) are independent of $z_i$ (and therefore not particularly suited as a basis of functions to expand other functions in). However the limit associated to a vector in a tetrahedron becomes the Macdonald polynomials (in either $z_i$ or $1/z_i$), and thus in particular an independent set of symmetric functions of the $z_i$. Now recall the iterated limit property, Proposition \ref{propiteratedlim}. This implies that if we have a limit $L$ associated to the polytope $P$, we can take further limits of $L$ (by sending some parameters to 0 or infinity) obtaining the limit associated to any polytope which contains $P$ in its boundary. All limits not associated to the interior of the octahedron have a limit to the limit associated to the interior of one of the  tetrahedra. In particular all limits not associated to the interior of the octahedron must form an independent set of functions of the $z_i$.

While these families of limiting interpolation functions all form independent families of functions of the $z_i$, they do not all span the same space. Indeed, as mentioned the interiors of the tetrahedra correspond to permutation symmetric polynomials. 
If the vector $(\alpha,\beta,\zeta)$ is on a plane $\beta+\zeta\in \mathbb{Z}$, 
respectively $\beta-\zeta\in \mathbb{Z}$, 
then the functions have poles at $z_i \in b^{-1} q^{\mathbb{Z}_{\geq 1}}$, 
respectively  $z_i \in b q^{\mathbb{Z}_{\leq -1}}$, as can be seen by inspection. 
Thus whenever $\beta + \zeta \in \mathbb{Z}$ or $\beta-\zeta \in \mathbb{Z}$ the limits are rational functions of the $z_i$. Moreover we get limits which span spaces of $BC_n$-symmetric functions (i.e. invariant under $z_i \to 1/z_i$) or functions which lack this kind of symmetry, and other distinctions.

To obtain the limits for the binomial coefficients $\binom{\lambda}{\mu}_{[a,b];q,t;p}$ (Definition \ref{defbinom}) we can now just plug in the known limits of the interpolation functions in their definition. 
The result can be described in much the same way as the limits of the interpolation functions. However, we consider the limit of $\binom{\lambda}{\mu}_{[ap^{\alpha},bp^{\beta}];q,t;p}$, and see that the limit depends only on which part of the plane $\mathbb{R}^2$ contains $(\alpha,\beta)$ if we cut the plane by the lines 
$\alpha \in \mathbb{Z}$, $\beta\in \mathbb{Z}$ and $\alpha-\beta\in \mathbb{Z}$. It turns out the limits are quite a bit prettier as there are no constant factors in the $p$-shifts (Proposition \ref{propbinomp}) of the binomial constants, which implies that the valuation of the binomials is always 0.

Let us end this section by formulating the main points in a proposition 
\begin{proposition}\label{propliminterpolation} We have the following results about the limits of the interpolation functions.
\begin{enumerate}
\item For each vector $(\alpha,\beta,\zeta)\in \mathbb{R}^3$ there exists a scale $x(\alpha,\beta,\zeta)$ such that
\begin{align*}
val(R_{\lambda}^{*(n)}(z_ip^{\zeta};ap^{\alpha},bp^{\beta};q,t;p)) &= |\lambda| x(\alpha,\beta,\zeta),  \\ 
lc(R_{\lambda}^{*(n)}(z_ip^{\zeta};ap^{\alpha},bp^{\beta};q,t;p))& =: R_{\lambda,(\alpha,\beta,\zeta)}^{*(n)}(z_i;a,b;q,t)
\end{align*}
 $R_{\lambda,(\alpha,\beta,\zeta)}^{*(n)}$ is a rational function of the $z_i\in \mathbb{C}^{*}$ with poles in at most $z_i^{\pm 1} \in bq^{\mathbb{Z}_{\leq -1}}$.
\item The translation group $G$ generated by shifts in the directions $(1,0,0)$, $(0,1,0)$, and $(\frac12,\frac12,\frac12)$ when acting on the vector $(\alpha,\beta,\zeta)$, leaves 
$R_{\lambda,(\alpha,\beta,\zeta)}^{*(n)}$ invariant up to an integer power of $a$, $b$, $q$, and $t$.
\item The function $R_{\lambda,(\alpha,\beta,\zeta)}^{*(n)}$ depends on $(\alpha,\beta,\zeta)$, only through the location of this vector with respect to the hyperplanes $\alpha-\beta,\alpha\pm \zeta, \beta\pm \zeta\in \mathbb{Z}$.
\item If $(\alpha,\beta,\zeta)$ is not in the $G$-orbit of the interior of the octahedron (from Figure \ref{fig1}) then the $R_{\lambda,(\alpha,\beta,\zeta)}^{*(n)}$ form an independent family of functions of the $z_i$, that is, there does not exist a non-trivial equation of the form
\[
\sum_{\lambda} d_{\lambda}(a,b,q,t)R_{\lambda,(\alpha,\beta,\zeta)}^{*(n)}(z_i;a,b;q,t)=0,
\]
where the sum is finite.
\item For any vector $(\alpha,\beta)\in \mathbb{R}^2$ we have 
\[
val\left( \binom{\lambda}{\mu}_{[ap^{\alpha},bp^{\beta}];q,t;p}\right)=0, \qquad
lc\left(\binom{\lambda}{\mu}_{[ap^{\alpha},bp^{\beta}];q,t;p}\right) =:\binom{\lambda}{\mu}_{(\alpha,\beta);[a,b];q,t}
\]
The translation group $H$ generated by shifts in the directions $(1,0)$, and  $(0,1)$ when acting on the vector $(\alpha,\beta)$, leaves $\binom{\lambda}{\mu}_{ (\alpha,\beta);[a,b]}$ invariant. The limit $\binom{\lambda}{\mu}_{ (\alpha,\beta);[a,b]}$ depends on $(\alpha,\beta)$ only through the 
location of this vector with respect to the hyperplanes $\alpha,\beta,\alpha-\beta\in \mathbb{Z}$.
\end{enumerate}

\end{proposition}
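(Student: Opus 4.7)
The plan is to proceed by induction on the number of variables $n$, using the branching rule \eqref{eqbranch} as the main recursion. The key inputs are the valuations and leading coefficients of the $C$-symbols from \eqref{eqlimc} and of the elementary binomial coefficient $\binon{\lambda}{\kappa}_{[ap^{\alpha},t];q,t;p}$ of Definition \ref{defbinon}, the latter being a ratio of products of theta functions and hence directly accessible via the formulas for $val\,\theta$ and $lc\,\theta$ recalled before \eqref{eqlimc}.

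For parts (1)--(3), one observes that the branching coefficient $c_{\lambda,\kappa}$ is a product of a $\binon{\lambda}{\kappa}$ and several $C^0_\lambda$'s, each of which contributes a valuation linear in $|\lambda|-|\kappa|$ with coefficient a piecewise linear function of $(\alpha,\beta,\zeta)$. Moreover the leading coefficient of each factor depends on $(\alpha,\beta,\zeta)$ only through the exponents $\alpha\pm\zeta$, $\beta\pm\zeta$, $\alpha-\beta$ and where each lands relative to $\mathbb{Z}$. Summing and inducting on \eqref{eqbranch} establishes the valuation claim in (1) and the piecewise-constancy asserted in (3); the leading coefficient $R_{\lambda,(\alpha,\beta,\zeta)}^{*(n)}$ is defined recursively in $n$ by a limiting branching rule in which each $c_{\lambda,\kappa}$ is replaced by its leading coefficient. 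Part (2) is an immediate consequence of Proposition \ref{propRstarp} together with the $p$-abelian property in the $z_i$'s: the three generators of $G$ correspond precisely to the three shift relations and act by multiplication by an integer monomial in $a,b,q,t$. Rationality of the limits and the location of their poles at $z_i^{\pm 1}\in bq^{\mathbb{Z}_{\leq -1}}$ follow by inspection of the explicit formulas compiled in Appendix \ref{apA}.

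Part (4) is the main content of the proposition, and the argument has two ingredients. First, an explicit computation carried out in Appendix \ref{apA} shows that for $(\alpha,\beta,\zeta)$ in the interior of either of the two tetrahedra in the fundamental domain of Figure \ref{fig1}, the limit $R_{\lambda,(\alpha,\beta,\zeta)}^{*(n)}$ coincides, up to a nonzero monomial normalization, with the Macdonald polynomial of type $\lambda$ in the variables $z_i$ or $1/z_i$; the classical linear independence of Macdonald polynomials settles this case. Second, for an arbitrary polytope $P$ outside the $G$-orbit of the octahedral interior, $P$ lies in the closure of some tetrahedron $T$, so one can choose a direction $u\in\mathbb{R}^3$ with $(\alpha,\beta,\zeta)+\epsilon u\in T$ for all small $\epsilon>0$. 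Applying the iterated limit property (Proposition \ref{propiteratedlim}) in this direction to a hypothetical finite relation $\sum_\lambda d_\lambda(a,b,q,t)\,R_{\lambda,(\alpha,\beta,\zeta)}^{*(n)} = 0$ yields a linear relation among the tetrahedral (hence Macdonald) limits, forcing every $d_\lambda$ to vanish. The delicate point, and the main obstacle I expect, is ensuring that this extraction does not simultaneously annihilate all surviving leading coefficients; as in the univariate treatment of \cite{vdBRuniv}, this is arranged by invoking Corollary \ref{corsumequalval} with a partition ordering compatible with the valuation scaling, so that at least one leading coefficient is guaranteed to survive.

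Finally, part (5) follows by substituting the limits already computed into Definition \ref{defbinom}: Proposition \ref{propbinomp} together with the valuation scaling of $R_\mu^{*(n)}$ established in (1) forces the valuation of the binomial coefficient to be identically $0$, while the translation invariance under $H$ and the hyperplane dependence of $\binom{\lambda}{\mu}_{(\alpha,\beta);[a,b]}$ are inherited from the corresponding properties of $R_\mu^{*(n)}$ evaluated at the principal specialization $\sqrt{a}\,q^{\lambda_i}t^{1-i}$, together with the invariance of $\Delta_\mu$ under $p$-shifts of its arguments satisfying the balancing condition.
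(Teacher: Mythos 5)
Your proposal follows essentially the same route as the paper: the branching rule \eqref{eqbranch} drives the induction, the valuations and leading coefficients of $c_{\lambda,\kappa}$ are read off from the $C^0$-symbols and $\binon{\lambda}{\kappa}$, part (2) comes from Proposition \ref{propRstarp} together with the $p$-abelian property, the hyperplane dependence in (3) comes from the component-wise dependence of the leading coefficients on $\alpha\pm\zeta,\beta\pm\zeta,\alpha-\beta$, and part (4) reduces to the identification of the tetrahedral limit with a Macdonald polynomial combined with the iterated limit property. The one point worth making explicit --- which the paper also only addresses indirectly (it writes ``assuming no cancellation occurs'' and defers the resolution to Corollary~\ref{cornonzero} in the appendix) --- is that the valuation claim in part (1) requires that the equal-valuation summands in the branching rule do not have cancelling leading coefficients. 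This is not established by the induction alone; it follows once one knows (via Proposition~\ref{propmcdpoly}) that the tetrahedral limit is a nonzero Macdonald polynomial and then applies Proposition~\ref{propiteratedlim} to propagate nonvanishing back to the boundary faces. So the logical order is really (tetrahedral identification) $\Rightarrow$ (no cancellation) $\Rightarrow$ parts (1) and (4), and your write-up of (1) should at least flag that dependence rather than treat the induction as self-contained.
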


\section{Limits of the biorthogonal functions}\label{seclimbiortho}
The goal of this section is to show two things. First of all we want to show that the limit of the biorthogonal functions are  $z$-dependent if and only if the limit of the univariate biorthogonal functions are $z$-dependent. Moreover the limit of the biorthogonal functions with $\lambda \subset m^n$ form a basis of the appropriate limit space, when this occurs. Secondly we want to show that there always exists an expansion (of the form \eqref{eqbiorthousingomega}, with $\Omega_{\lambda/\nu}$ replaced by its definition, Definition \ref{defomega}) for which we can obtain the limit of the biorthogonal functions by simply interchanging limit and sum.

  In order to study the limits of the biorthogonal functions, it is necessary to first consider the limits of $\Omega_{\lambda/\kappa}$.
Indeed, interchanging limit and sum in the defining expansion \eqref{eqdefbiortho} of $\tilde R_{\lambda}^{(n)}$ does not always work. In that sum we expand $\tilde R_{\lambda}^{(n)}$ in $R_{\mu}^{*(n)}(;t_0,u_0)$ (for $\mu \subset \lambda$), and we have seen in the previous section that for some ways of scaling the $z_i$, $t_0$ and $u_0$ with $p$ the limit of these interpolation functions will not depend on $z$. This might mean that the limit is itself $z$-independent, but it could also be that the valuation of $\tilde R_{\lambda}^{(n)}$ is more than the valuation of the summands in \eqref{eqdefbiortho}. By using equation \eqref{eqbiorthousingomega} instead we can expand $\tilde R_{\lambda}^{(n)}$ in $R_{\lambda}^{*(n)}(;v,u_0)$ for a $v$ of our choice, and in particular we can choose $v$ so that at least the interpolation functions have a proper limit, and their limits form an linearly independent family of  $z$-dependent functions.

\subsection{Limits of Omega} 
In this subsection we show that at least one of the eight different sum expressions we have for $\Omega_{\lambda/\kappa}$ (using its symmetries) is such that we can determine its valuation as the minimum of the valuations of the summands (in the expansion in powers of $p$). In particular this implies we can always determine the valuation of $\Omega_{\lambda/\kappa}$. 

Recall that the valuation of a sum of a finite number of terms is at least the minimum of the valuations of the individual summands, and equals the minimum if the minimum is attained at most once. Moreover, if the valuation of the sum equals the minimum of the valuations of the summands, then the leading coefficient of the sum equals the sum of the leading coefficients of summands with minimal valuation. On the other hand, if the valuation of multiple summands is the same, the leading coefficients of those summands could cancel each other, and we would have a priori no good way of expressing either the valuation or the leading coefficient of the sum in terms of leading coefficients and valuations of summands.
 
Let us first consider the case where we can determine the valuation and leading coefficient of $\Omega_{\lambda/\kappa}$ because there is a unique term in its sum expression with minimal valuation. 
\begin{proposition}\label{proplimomega}
Define the function
\[
f(\alpha,\beta;\gamma_1,\gamma_2,\gamma_3,\gamma_4) = 
g(\alpha+\beta-\sum_{r=1}^4 \gamma_r) + g(2\alpha)-g(-\alpha-\beta)-g(\sum_{r=1}^4 \gamma_r-2\beta)+
\sum_{r=1}^4 g(\gamma_r-\beta)  - g(\alpha-\gamma_r),
\]
where $g(x) = \{x\}(1-\{x\})$ (with $\{x\}$ the fractional part of $x$).
Let $a=\tilde a p^{\alpha}$, $b=\tilde b p^{\beta}$ and $v_r=\tilde v_r p^{\gamma_r}$ (where $\tilde a$, $\tilde b$ and $\tilde v_r$ are all $p$-independent).
\begin{itemize}
\item If $f(\alpha,\beta;\gamma)>0$ we have
\[
lc(\Omega_{\lambda/\kappa}(a, b;v_1,v_2,v_3,v_4 ;q,t;p)) = 
lc\left(\prod_{r=1}^4 \frac{ C_{\lambda}^0(\frac{v_r}{b}) }{  C_{\kappa}^0(\frac{v_r}{b})}
\frac{ \Delta_\lambda^0(\frac{a}{b}~|~ \frac{abpq}{v_1v_2v_3v_4})}
{\Delta_{\kappa}^0(\frac{v_1v_2v_3v_4}{b^2 pq}~|~ \frac{v_1v_2v_3v_4}{abpq})}
\frac{C_{\lambda}^+(pqa^2)}{C_{\lambda}^+(\frac{a}{b})}
 \binom{\lambda}{\kappa}_{[\frac{a}{b},\frac{abpq }{v_1v_2v_3v_4}]} \right)
\]
and likewise with the valuations, i.e.
\[
val(\Omega_{\lambda/\kappa}(a, b;v_1,v_2,v_3,v_4 ;q,t;p)) = 
val\left(\prod_{r=1}^4 \frac{ C_{\lambda}^0(\frac{v_r}{b}) }{  C_{\kappa}^0(\frac{v_r}{b})}
\frac{ \Delta_\lambda^0(\frac{a}{b}~|~ \frac{abpq}{v_1v_2v_3v_4})}
{\Delta_{\kappa}^0(\frac{v_1v_2v_3v_4}{b^2 pq}~|~ \frac{v_1v_2v_3v_4}{abpq})}
\frac{C_{\lambda}^+(pqa^2)}{C_{\lambda}^+(\frac{a}{b})}
 \binom{\lambda}{\kappa}_{[\frac{a}{b},\frac{abpq }{v_1v_2v_3v_4}]} \right).
\]
\item If $f(\alpha,\beta;\gamma)<0$ we have 
\[
lc(\Omega_{\lambda/\kappa}(a, b;v_1,v_2,v_3,v_4;q,t;p)) = 
lc\left(\prod_{r=1}^4 \frac{  C_{\lambda}^0(\frac{pqa}{v_r}) }{ C_{\kappa}^0(\frac{pqa}{v_r})}
\frac{\Delta_{\lambda}^0(pqa^2~|~ pqab)}
{\Delta_{\kappa}^0(\frac{a}{b}~|~ \frac{1}{pqab}) }
\frac{C_{\kappa}^+(\frac{a}{b})}{C_{\kappa}^+(\frac{v_1v_2v_3v_4}{pqb^2})}
\binom{\lambda}{\kappa}_{[pqa^2,pqab]} \right)
\]
and likewise with the valuations.
\end{itemize}
\end{proposition}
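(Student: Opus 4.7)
The plan is to show that when $f \neq 0$ the valuation of the defining sum for $\Omega_{\lambda/\kappa}$ is attained uniquely at one of the two extreme summands $\mu=\lambda$ or $\mu=\kappa$, whereupon Corollary~\ref{corsumequalval} identifies both $val(\Omega)$ and $lc(\Omega)$ with those of that extremal summand. The displayed expressions in the proposition are then to be recognized as the extremal summands after a short simplification.

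First I would analyze how the valuation of the $\mu$-summand $T_\mu$ in the defining expansion depends on $\mu$. Expanding every $\Delta_\lambda^0$ as a ratio of $C_\lambda^0$'s via $\Delta_\lambda^0(a\mid b) = C_\lambda^0(b)/C_\lambda^0(pqa/b)$, and writing every $C_\mu^0$ and $\binom{\lambda}{\mu}$ as an explicit product of theta factors $\theta(xp^\chi;p)$, one reads off $val(T_\mu)$ using $val(\theta(xp^\chi;p)) = \tfrac{1}{2}\{\chi\}(\{\chi\}-1) - \tfrac{1}{2}\chi(\chi-1)$. The crucial base case is $\lambda = (1)$, $\kappa = \emptyset$: all products over boxes collapse to single factors, and a direct computation establishes
\[
val(T_{(1)}) - val(T_\emptyset) = -\tfrac{1}{2}\, f(\alpha,\beta;\gamma),
\]
with the non-periodic $-\tfrac{1}{2}\chi(\chi-1)$ contributions cancelling completely and only the periodic pieces surviving, assembling into exactly the six $g$-arguments comprising $f$.

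Next I would propagate this base computation: the single-box valuation increment $val(T_{\mu \cup \{(i_0,j_0)\}}) - val(T_\mu)$ should always equal $-\tfrac{1}{2}f$, independently of the added-box position $(i_0,j_0)$ and of the intermediate shape $\mu$. The factors of $T_\mu$ that depend on $\mu$ come from $\binom{\lambda}{\mu}_{[pqa^2,pqab]}$, $\binom{\mu}{\kappa}_{[a/b,\,abpq/v_1v_2v_3v_4]}$, the two $\Delta_\mu^0$'s, and the $C_\mu^0$'s; these should pair off so that adding a single box modifies the $\chi$-exponents in precisely the same six places as in the base case. Iterating along any chain $\kappa = \mu^{(0)} \prec' \mu^{(1)} \prec' \cdots \prec' \mu^{(|\lambda|-|\kappa|)} = \lambda$ then yields $val(T_\mu) - val(T_\kappa) = -\tfrac{1}{2} f \cdot (|\mu|-|\kappa|)$, so $\mu \mapsto val(T_\mu)$ is affine-linear in $|\mu|$ and, for $f \neq 0$, attains its unique minimum at $\mu = \lambda$ when $f > 0$ and at $\mu = \kappa$ when $f < 0$.

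Finally, with the dominant summand identified, I would substitute $\mu = \lambda$ (respectively $\mu = \kappa$) into the definition and simplify. Using $\binom{\lambda}{\lambda}_{[pqa^2,pqab]} = 1$, the cancellation of the $C_\mu^0(pqa/v_r)$ factors between numerator and denominator, and the identity $\Delta_\lambda^0(pqa^2\mid pqab)/\Delta_\lambda^0(a/b\mid 1/(pqab)) = C_\lambda^+(pqa^2)/C_\lambda^+(a/b)$ (a direct consequence of the defining products of $C_\lambda^0$ and $C_\lambda^+$), the $\mu = \lambda$ summand reduces to the first displayed expression. The $f < 0$ case is entirely symmetric, using $\binom{\kappa}{\kappa}_{[a/b,\,abpq/v_1v_2v_3v_4]} = 1$ and the analogous cancellations on the $v_r/b$ side. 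The main obstacle is the middle step: verifying that the per-box valuation decrement is genuinely independent of the box's position and of the shape $\mu$ requires careful bookkeeping across the four case-split products of Definition~\ref{defbinon}, together with a demonstration that the base-case vanishing of the non-periodic quadratic piece propagates under arbitrary single-box shifts.
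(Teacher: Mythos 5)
Your high-level strategy — locate the dominant $\mu$-summand and apply Corollary~\ref{corsumequalval} — matches the paper's, but there are two genuine gaps in how you carry it out.

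First, you propose to expand both the $C_\mu^0$'s and the binomial coefficients $\binom{\lambda}{\mu}$ ``as an explicit product of theta factors,'' and at the end you refer to ``the four case-split products of Definition~\ref{defbinon}.'' But Definition~\ref{defomega} of $\Omega_{\lambda/\kappa}$ uses the \emph{generalized} binomial coefficients $\binom{\lambda}{\mu}_{[a,b]}$ of Definition~\ref{defbinom}, which are built from interpolation functions and are not explicit products of theta factors; the angle-bracket coefficients $\binon{\lambda}{\kappa}$ of Definition~\ref{defbinon} are a different object. This makes your per-box propagation argument unworkable as described, and even if the base case $\lambda=(1)$, $\kappa=\emptyset$ can be computed by hand, verifying the position-independence of the single-box increment for the generalized binomials would require exactly the machinery you are trying to avoid. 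The paper's route is much shorter: Proposition~\ref{propliminterpolation}(5) already gives $val\bigl(\binom{\lambda}{\mu}_{[ap^\alpha,bp^\beta]}\bigr)=0$ for all $\alpha,\beta$, and \eqref{eqlimc} gives $val(C_\mu^0(xp^\chi))$ manifestly proportional to $|\mu|$; collecting the six $\mu$-dependent $C_\mu^0$ factors immediately yields the term $-\tfrac12|\mu|f(\alpha,\beta;\gamma)$ with no induction needed.

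Second, the simplification of the extremal summand is wrong as stated. You assert $\binom{\lambda}{\lambda}_{[pqa^2,pqab]}=1$ and then the ``identity'' $\Delta_\lambda^0(pqa^2\mid pqab)/\Delta_\lambda^0(a/b\mid 1/(pqab)) = C_\lambda^+(pqa^2)/C_\lambda^+(a/b)$. Neither is true: expanding the left-hand side of that ``identity'' via $\Delta_\lambda^0(a\mid b)=C_\lambda^0(b)/C_\lambda^0(pqa/b)$ produces only $C_\lambda^0$-type factors (with box-exponents $q^{j-1}t^{1-i}$), which cannot equal a ratio of $C_\lambda^+$'s (box-exponents $q^{\lambda_i+j-1}t^{2-\lambda_j'-i}$). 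The correct mechanism, which the paper invokes by citing the explicit value of $\binom{\lambda}{\lambda}_{[a,b]}$ from Rains, is that $\binom{\lambda}{\lambda}_{[pqa^2,pqab]}$ is a nontrivial product of $C_\lambda^0$ and $C_\lambda^+$ factors whose $C^0$-parts exactly cancel the $\Delta^0$-ratio, leaving $C_\lambda^+(pqa^2)/C_\lambda^+(a/b)$. The same issue recurs in your $\mu=\kappa$ case with $\binom{\kappa}{\kappa}$. So while the final displayed formulas in the proposition are correct, the route you sketch to reach them contains a false intermediate step.
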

\begin{proof}
Recall that \eqref{eqlimc} implies 
\[
val(C_{\mu}^0(p^{\alpha}x)) = \frac12 |\mu| (  \{\alpha\}(\{\alpha\}-1)- \alpha(\alpha-1)).
\]
Also recall from the previous section that $val\left( \binom{\lambda}{\nu}_{[a,b]} \right) =0$. 
Thus for the individual summands in Definition \ref{defomega} of $\Omega_{\lambda/\kappa}$ we find
\begin{multline*}
val\left(\prod_{r=1}^4 \frac{  C_{\lambda}^0(\frac{pqa}{v_r}) C_{\mu}^0(\frac{v_r}{b}) }{ C_{\mu}^0(\frac{pqa}{v_r}) C_{\kappa}^0(\frac{v_r}{b})}
\frac{\Delta_{\lambda}^0(pqa^2~|~ pqab) \Delta_\mu^0(\frac{a}{b}~|~ \frac{abpq}{v_1v_2v_3v_4})}
{\Delta_{\mu}^0(\frac{a}{b}~|~ \frac{1}{pqab}) \Delta_{\kappa}^0(\frac{v_1v_2v_3v_4}{b^2 pq}~|~ \frac{v_1v_2v_3v_4}{abpq})}
\binom{\lambda}{\mu}_{[pqa^2,pqab]} \binom{\mu}{\kappa}_{[\frac{a}{b},\frac{abpq }{v_1v_2v_3v_4}]} \right)
\\=
-\frac12 |\mu| f(\alpha,\beta,\gamma) + 
 val\left( \Delta_{\lambda}^0(pqa^2~|~ pqab)\prod_{r=1}^4 C_{\lambda}^0(\frac{pqa}{v_r}) \right)
 - val\left(\Delta_{\kappa}^0(\frac{v_1v_2v_3v_4}{b^2 pq}~|~ \frac{v_1v_2v_3v_4}{abpq}) \prod_{r=1}^4 C_{\kappa}^0(\frac{v_r}{b}) \right).
\end{multline*}

In particular we see that if $f(\alpha,\beta,\gamma)\neq 0$ the valuation of the summands is $|\mu|$-dependent. If $f>0$ it is minimized if $|\mu|$ is largest, thus only at $\mu=\lambda$. On the other hand if $f<0$ it is minimized if $\mu=\kappa$. Together with the discussion before the proposition on how the valuations of summands correspond to valuations of the sum, this finishes the proof.

We can simplify the summands at $\mu=\lambda$ and $\mu=\kappa$ by using the explicit value of $\binom{\lambda}{\lambda}_{[a,b];q,t;p}$ from \cite[(4.8)]{RainsBCn}.
\end{proof}

In case the function $f$ in the above proposition vanishes we still don't know the valuation of $\Omega_{\lambda/\kappa}$ or its leading coefficient. However we can revert to Corollary \ref{corsumequalval} of the iterated limit theorem to tell us the valuation and leading coefficient of $\Omega_{\lambda/\kappa}$ in some more situations.
\begin{proposition}
We use the notation of the previous proposition. If $f(\alpha,\beta;\gamma)=0$, and there exists a vector $x=(x_a,x_b,x_{\gamma_1},\ldots,x_{\gamma_4})$ such that
there exists a $\delta>0$ such that for any $\epsilon$ with $\delta>\epsilon>0$ we have $f(\alpha+\epsilon x_a, \beta+\epsilon x_b; \gamma_1+\epsilon x_{\gamma_1}, \ldots, \gamma_4+\epsilon x_{\gamma_4})\neq 0$ then we have
\begin{multline*}
val(\Omega_{\lambda/\kappa}(a, b;v_1,v_2,v_3,v_4;q,t;p))
\\ = val\left( \prod_{r=1}^4 \frac{  C_{\lambda}^0(\frac{pqa}{v_r}) C_{\mu}^0(\frac{v_r}{b}) }{ C_{\mu}^0(\frac{pqa}{v_r}) C_{\kappa}^0(\frac{v_r}{b})}
\frac{\Delta_{\lambda}^0(pqa^2~|~ pqab) \Delta_\mu^0(\frac{a}{b}~|~ \frac{abpq}{v_1v_2v_3v_4})}
{\Delta_{\mu}^0(\frac{a}{b}~|~ \frac{1}{pqab}) \Delta_{\kappa}^0(\frac{v_1v_2v_3v_4}{b^2 pq}~|~ \frac{v_1v_2v_3v_4}{abpq})}
\binom{\lambda}{\mu}_{[pqa^2,pqab]} \binom{\mu}{\kappa}_{[\frac{a}{b},\frac{abpq }{v_1v_2v_3v_4}]} \right)
\end{multline*}
for any $\kappa \subset \mu \subset \lambda$.
Moreover we have
\begin{multline*}
lc(\Omega_{\lambda/\kappa}) \\= 
\sum_{\kappa \subset \mu \subset \lambda}
lc\left( \prod_{r=1}^4 \frac{  C_{\lambda}^0(\frac{pqa}{v_r}) C_{\mu}^0(\frac{v_r}{b}) }{ C_{\mu}^0(\frac{pqa}{v_r}) C_{\kappa}^0(\frac{v_r}{b})}
\frac{\Delta_{\lambda}^0(pqa^2~|~ pqab) \Delta_\mu^0(\frac{a}{b}~|~ \frac{abpq}{v_1v_2v_3v_4})}
{\Delta_{\mu}^0(\frac{a}{b}~|~ \frac{1}{pqab}) \Delta_{\kappa}^0(\frac{v_1v_2v_3v_4}{b^2 pq}~|~ \frac{v_1v_2v_3v_4}{abpq})}
\binom{\lambda}{\mu}_{[pqa^2,pqab]} \binom{\mu}{\kappa}_{[\frac{a}{b},\frac{abpq }{v_1v_2v_3v_4}]} \right)
\end{multline*}
\end{proposition}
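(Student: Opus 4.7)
The plan is to invoke Corollary \ref{corsumequalval}, together with the iterated-limit identity of Proposition \ref{propiteratedlim}, to rule out cancellation at leading $p$-order in the defining sum of $\Omega_{\lambda/\kappa}$. Write $T_\mu$ for the summand in Definition \ref{defomega} indexed by $\mu$. The valuation computation reproduced in the proof of Proposition \ref{proplimomega} shows that
\[
val(T_\mu) = -\tfrac{1}{2} |\mu|\, f(\alpha,\beta;\gamma) + V_0,
\]
where $V_0$ depends on $\lambda,\kappa$ and the $p$-independent parameters but not on $\mu$. Under the hypothesis $f(\alpha,\beta;\gamma)=0$ this collapses to $val(T_\mu) = V_0$ uniformly in $\mu$, so the proposition reduces to showing that $val(\Omega_{\lambda/\kappa}) = V_0$ (which then immediately yields the leading-coefficient formula as well, since all summands have the same valuation).

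To do so, introduce perturbed parameters $a \to a p^{\epsilon x_a}$, $b \to b p^{\epsilon x_b}$, $v_r \to v_r p^{\epsilon x_{\gamma_r}}$ and write $f^\epsilon$, $\Omega^\epsilon_{\lambda/\kappa}$, $T_\mu^\epsilon$ for the corresponding perturbed objects. Continuity of $f$ together with the hypothesis $f^\epsilon \neq 0$ for all $\epsilon \in (0,\delta)$ forces $f^\epsilon$ to have constant sign on that interval. Assume first $f^\epsilon > 0$; then Proposition \ref{proplimomega} applies at the perturbed parameters and gives $val(\Omega^\epsilon_{\lambda/\kappa}) = val(T_\lambda^\epsilon)$. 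The case $f^\epsilon < 0$ is entirely symmetric, yielding $val(\Omega^\epsilon_{\lambda/\kappa}) = val(T_\kappa^\epsilon)$ via the second bullet of the same proposition; the remainder of the argument proceeds identically with $\lambda$ replaced by $\kappa$.

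Next, apply Proposition \ref{propiteratedlim} with $u = x$ to $h := \Omega_{\lambda/\kappa}$: for all sufficiently small $\epsilon > 0$,
\[
val(\Omega^\epsilon_{\lambda/\kappa}) = val(\Omega_{\lambda/\kappa}) + \epsilon\, val\bigl(lc(\Omega_{\lambda/\kappa})(p^{x}\,\cdot)\bigr),
\]
and the same proposition applied to the extremal summand gives $val(T_\lambda^\epsilon) = V_0 + \epsilon\, val\bigl(lc(T_\lambda)(p^x\,\cdot)\bigr)$. Equating these two affine functions of $\epsilon$ on $(0,\delta)$ forces $val(\Omega_{\lambda/\kappa}) = V_0$. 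Since all $T_\mu$ and $\Omega_{\lambda/\kappa}$ share valuation $V_0$, writing $T_\mu = lc(T_\mu) p^{V_0} + (\text{higher-order terms})$ and summing over $\kappa \subset \mu \subset \lambda$ yields $\Omega_{\lambda/\kappa} = \bigl(\sum_\mu lc(T_\mu)\bigr) p^{V_0} + (\text{higher-order terms})$; the coefficient $\sum_\mu lc(T_\mu)$ is necessarily nonzero as a rational function (else $val(\Omega_{\lambda/\kappa})$ would strictly exceed $V_0$, contradicting what was just proved), and we read off the required formula for $lc(\Omega_{\lambda/\kappa})$.

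The main obstacle is conceptual rather than computational: when $f=0$, many summands share the leading $p$-order, and a direct application of Corollary \ref{corsumequalval} requires a single perturbation direction along which some single summand has strictly smallest valuation. The hypothesis on the existence of $x$ is precisely engineered to supply such a direction, after which the non-degenerate case of Proposition \ref{proplimomega} combined with the iterated-limit identity forces the absence of leading-order cancellation in the unperturbed sum. The explicit bookkeeping of $V_0$ is identical to that in the proof of Proposition \ref{proplimomega} and requires no fresh computation.
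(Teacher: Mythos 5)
Your proof is correct and takes essentially the same route as the paper's, which invokes Corollary \ref{corsumequalval} (extended to an arbitrary finite number of summands) together with the sign of $f$ under the perturbation to identify which summand dominates in the iterated limit. Your write-up spells out the mechanics directly from Proposition \ref{propiteratedlim}, matching the two affine-in-$\epsilon$ expressions for the valuations, but this is just an unwinding of the same cited lemma rather than a different argument.
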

Note that the different expressions for the valuations above do not depend on $\mu$ precisely because $f$ vanishes.
\begin{proof}
This is a direct consequence of Corollary \ref{corsumequalval} extended to an arbitrary finite number of summands. If
$f(\alpha+\epsilon x_a, \beta+\epsilon x_b; \gamma_1+\epsilon x_{\gamma_1}, \ldots, \gamma_4+\epsilon x_{\gamma_4})> 0$ (for all small positive $\epsilon$) then the summand with $\mu=\lambda$ will dominate all other terms in the iterated limit, whereas if $f(\alpha+\epsilon x_a, \beta+\epsilon x_b; \gamma_1+\epsilon x_{\gamma_1}, \ldots, \gamma_4+\epsilon x_{\gamma_4})< 0$ the summand with $\mu=\kappa$ will dominate all the others.
\end{proof}

Notice that $g(x)-x(1-x)$ is a piecewise linear of $x$. Now we can see that $f$ is piecewise linear, by replacing all instances of $g$ in the definition of $f$ by $x(1-x)$ and observing that the result vanishes. Hence, the points of space on which $f$ is identically zero in a neighborhood around that point are some polytopes. To obtain a proper limit for $\Omega_{\lambda/\kappa}$ on those sets
we can use the $W(D_4)$ symmetry. Indeed, while the sum $\Omega_{\lambda/\kappa}$ is invariant under this symmetry the summands are not, and in particular the function $f$ which controls whether we can obtain the limit is not. $f$ being clearly permutation symmetric in the $\gamma_j$ we only have to consider the 8 cosets of $S_4$ in $W(D_4)$. This gives us 8 functions $f$. It turns out only 5 of these functions are linearly independent. However, in each point of space at least one of these five (in fact we only need 4 of them) is non-zero at this point, or at least at some points in an arbitrary small neighborhood of this point. In particular we can obtain 
a proper limit for $\Omega_{\lambda/\kappa}$ at all points in space.
\begin{lemma} 
Let $f$ be as defined in Proposition \ref{proplimomega}.
Consider the four functions 
\begin{align*}
f(\alpha,\beta;\gamma)&, & f_{12}(\alpha;\beta;\gamma) &= f(\alpha,\beta-\gamma_1-\gamma_2;-\gamma_1,-\gamma_2,\gamma_3,\gamma_4), \\ f_{13}(\alpha;\beta;\gamma) &= f(\alpha,\beta-\gamma_1-\gamma_3;-\gamma_1,\gamma_2,-\gamma_3,\gamma_4), &f_{1234}(\alpha;\beta;\gamma) &= f(\alpha,\beta-\sum_{r=1}^4 \gamma_r;-\gamma_1,-\gamma_2,-\gamma_3,-\gamma_4). 
\end{align*}
Then for any point $(\alpha,\beta;\gamma)$ at least one of these four functions is not locally constant and, hence, not locally zero.
\end{lemma}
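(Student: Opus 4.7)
My plan is to argue by contradiction: assume all four of $f$, $f_{12}$, $f_{13}$, $f_{1234}$ are locally constant on a neighborhood of some point $p=(\alpha_0,\beta_0;\gamma_0)$. First I would record that each $f_\sigma$ is piecewise linear on $\mathbb{R}^6$ by the same argument the paper gives for $f$: writing $f_\sigma=\sum_i \epsilon_i^{(\sigma)}\bigl[g(y_i^{(\sigma)})-y_i^{(\sigma)}(1-y_i^{(\sigma)})\bigr]$, the quadratic remainder $\sum_i \epsilon_i^{(\sigma)} y_i^{(\sigma)}(1-y_i^{(\sigma)})$ vanishes identically (the identity for $f$ transports to each $f_\sigma$ under the affine change of variables in $(\beta,\gamma)$), while each summand $g(y)-y(1-y)$ is piecewise linear in $y$ with slope $2\lfloor y\rfloor$ on each open interval $(n,n+1)$. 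The break hyperplanes of $f_\sigma$ are thus $\{y_i^{(\sigma)}\in\mathbb{Z}\}$, and on each chamber the gradient of $f_\sigma$ equals $2\sum_i\epsilon_i^{(\sigma)}\lfloor y_i^{(\sigma)}\rfloor\nabla y_i^{(\sigma)}$.

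Being locally constant at $p$ is equivalent, for a continuous piecewise linear function, to this gradient vanishing on every chamber whose closure meets $p$. Taking the $\partial_{\gamma_1}$-component and substituting $g'(y)=1-2\{y\}$ (the constants drop out because $\sum_i\epsilon_i^{(\sigma)}(1-2y_i^{(\sigma)})\nabla y_i^{(\sigma)}$ is itself the gradient of the identically vanishing quadratic piece) yields, for each $\sigma$, a fractional-part identity linking those $\{y_i^{(\sigma)}\}$ with $\partial_{\gamma_1} y_i^{(\sigma)}\neq 0$. The key is then to show the four identities cannot all hold at any $p$. The change of variables producing $f_{12}$, $f_{13}$, $f_{1234}$ introduces genuinely new $\gamma_1$-dependent arguments of $g$---for example $\alpha+\gamma_1$ replaces $\alpha-\gamma_1$ in $f_{12}$ and $f_{13}$, and $\gamma_2+\gamma_3+\gamma_4-\beta$ appears in $f_{1234}$---so the four fractional-part equations involve genuinely distinct $\mathbb{Z}$-linear combinations of the coordinates.

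The main computational step is to combine these four equations by suitable additions and subtractions to isolate an identity of the form $\{A\}+\{B\}=\{C\}+\{D\}$ in which $A,B,C,D$ are specific $\mathbb{Z}$-linear combinations of $(\alpha,\beta,\gamma)$, and then to argue that this identity must fail somewhere in every neighborhood of $p$, using the standard bounds $\{x\}+\{y\}\in[0,2)$ and the periodicity $\{x+c\}=\{x\}$ for $c\in\mathbb{Z}$; this amounts to a finite case analysis on the coordinates of $p$ modulo~$1$. A degenerate case that needs separate handling is when $p$ already lies on several break hyperplanes simultaneously: here a direct gradient-jump argument suffices, because at least one $f_\sigma$ has two adjacent chambers whose gradients differ by $\pm\epsilon_j^{(\sigma)}\nabla y_j^{(\sigma)}\neq 0$, already violating local constancy of $f_\sigma$. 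The main obstacle I anticipate is the combinatorial bookkeeping needed to pinpoint the sharpest combination yielding the contradiction; the underlying geometric content---that the four $W(D_4)$-coset representatives are ``independent enough'' to detect every non-vanishing direction---is clear, but translating it into a clean algebraic proof requires careful case analysis.
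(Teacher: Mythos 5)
Your overall strategy matches the paper's: assume all four functions are locally constant, exploit piecewise linearity and the substitution $g'(x)=1-2\{x\}$, and derive a fractional-part contradiction. However, the proposal stops short of the step that actually carries the proof. You anticipate arriving at an identity of the form $\{A\}+\{B\}=\{C\}+\{D\}$ and then refuting it by a ``finite case analysis on the coordinates of $p$ modulo 1,'' but such a balanced identity, if $A+B=C+D$ as exact linear forms, genuinely does hold on some chambers; a case analysis would then only show failure on \emph{other} chambers, which is not enough to contradict local constancy at an arbitrary point. The paper instead exhibits one specific linear combination of directional derivatives,
\[
\tfrac{d}{d\alpha}(f-f_{1234})-\bigl(\tfrac{d}{d\gamma_2}-\tfrac{d}{d\gamma_4}\bigr)(f_{12}-f_{1234})-\bigl(\tfrac{d}{d\gamma_1}-\tfrac{d}{d\gamma_3}\bigr)(f-f_{13})-2\bigl(\tfrac{d}{d\gamma_1}-\tfrac{d}{d\gamma_2}\bigr)(f-f_{12}),
\]
which telescopes down to $4\bigl(g'(2\alpha)-g'(\alpha-\gamma_1)-g'(\alpha+\gamma_1)\bigr)$, and after substituting $g'$ one obtains
\[
0=-\tfrac12-\{2\alpha\}+\{\alpha-\gamma_1\}+\{\alpha+\gamma_1\}.
\]
Since $(\alpha-\gamma_1)+(\alpha+\gamma_1)-2\alpha=0$, the three fractional parts sum to an integer, so the right-hand side lies in $-\tfrac12+\mathbb{Z}$ and the equation fails identically, with no case analysis. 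This non-integral constant $-\tfrac12$ is exactly what your proposed target shape misses, and finding the combination that produces it is the actual content of the proof. As it stands the proposal is a reasonable plan but leaves the decisive computation as an acknowledged unresolved ``main obstacle,'' so it does not yet constitute a proof. Your separate handling of the degenerate on-hyperplane case is unnecessary: since local constancy is an open condition, one may, as the paper does, simply move to a nearby point where all four functions are differentiable.
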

\begin{proof}
As $f$ is piecewise linear we can differentiate almost everywhere; in particular we may assume that all four functions are differentiable at the point
$(\alpha,\beta;\gamma)$. Moreover, if all four functions are locally constant, the derivative (in any direction) has to vanish. 
In particular we get 
\begin{multline*}
0 = \frac{d}{d\alpha} (f-f_{1234}) - (\frac{d}{d\gamma_2}-\frac{d}{d\gamma_4})(f_{12}-f_{1234}) - (\frac{d}{d\gamma_1}-\frac{d}{d\gamma_3})(f-f_{13}) - 2(\frac{d}{d\gamma_1}-\frac{d}{d\gamma_2})(f-f_{12}) \\ =4(g'(2\alpha) - g'(\alpha-\gamma_1)-g'(\alpha+\gamma_1)).
\end{multline*}
Now we can plug in $g'(x) = 1-2\{x\}$ (which holds if $x\not \in \mathbb{Z}$) and divide by 8 to get
\[
0 = -\frac12 -\{2\alpha\} + \{\alpha-\gamma_1\} + \{\alpha + \gamma_1\}.
\]
As the sum of the fractional parts in this equation is an integer, this equation cannot be satisfied, so our assumption that all derivatives vanish is false. Therefore at least one of the four functions must not be locally constant.
\end{proof}

The upshot is that for any vector $(\alpha,\beta;\gamma)$ we can determine the size of $\Omega_{\lambda/\kappa}$. 
Combining the previous lemmas we thus get
\begin{proposition}
There exist piecewise linear functions $f_1(\alpha,\beta;\gamma)$ and $f_2(\alpha,\beta;\gamma)$ such that
\[
val(\Omega_{\lambda/\kappa}(ap^{\alpha},bp^{\beta};v_r p^{\gamma_r};p,q)) = f_1(\alpha,\beta;\gamma)|\lambda| + f_2(\alpha,\beta;\gamma)|\kappa|.
\]
\end{proposition}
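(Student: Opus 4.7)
The plan is to compute the valuation of $\Omega_{\lambda/\kappa}$ directly by combining the preceding two propositions with the lemma, and then to read off its piecewise linear structure. The key structural fact, already implicit in the proof of Proposition~\ref{proplimomega}, is that each $\mu$-summand in Definition~\ref{defomega} has valuation
\[
A(\alpha,\beta;\gamma)\,|\lambda| \;-\; \tfrac{1}{2} f(\alpha,\beta;\gamma)\,|\mu| \;+\; C(\alpha,\beta;\gamma)\,|\kappa|,
\]
where $A$ and $C$ are piecewise linear in $(\alpha,\beta;\gamma)$. This follows because $val(C^{0}_{\mu}(xp^{\chi})) = |\mu|\cdot g(\chi)$ with $g$ piecewise linear (after subtracting the smooth quadratic correction $x(x-1)$), because $val(\Delta^{0}_{\mu})$ has the same shape, and because $val\bigl(\binom{\lambda}{\mu}_{[\cdot,\cdot]}\bigr) = 0$.

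First I would handle the generic case in which at least one of the four $W(D_{4})$-variants $f, f_{12}, f_{13}, f_{1234}$ is nonzero at the chosen $(\alpha,\beta;\gamma)$. In that sector Proposition~\ref{proplimomega} pins down the valuation as that of the extremal summand ($\mu=\lambda$ if the relevant variant is positive, $\mu=\kappa$ if negative), and its explicit formula is visibly of the form $f_{1}|\lambda| + f_{2}|\kappa|$ with piecewise linear coefficients. When all four variants vanish at the point, the lemma guarantees that at least one of them is not locally constant, so I pick a perturbation direction $x$ along which that variant becomes nonzero for small positive $\epsilon$; the companion proposition to Proposition~\ref{proplimomega} (the one that uses such a direction) then still determines the valuation via a single summand and produces the same piecewise-linear-in-coefficients expression.

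Assembling the cases we obtain globally
\[
val(\Omega_{\lambda/\kappa}) = \bigl(A - \tfrac{1}{2} f^{+}\bigr)\,|\lambda| + \bigl(C + \tfrac{1}{2} f^{-}\bigr)\,|\kappa|,
\]
where $f^{+} = \max(f,0)$, $f^{-} = \max(-f,0)$, and $A$, $C$, $f$ are taken from the symmetry sector selected in each region of parameter space. This identifies $f_{1} = A - \tfrac{1}{2} f^{+}$ and $f_{2} = C + \tfrac{1}{2} f^{-}$, both piecewise linear in $(\alpha,\beta;\gamma)$.

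The hard part will be verifying the global consistency of these locally defined formulas. Different $W(D_{4})$-sectors give different explicit expressions, and they must all evaluate to the same quantity $val(\Omega_{\lambda/\kappa})$; continuity across chamber walls is automatic for this reason, while piecewise linearity persists because the walls of the underlying hyperplane arrangement are all integer shifts of linear forms in $\alpha, \beta, \gamma_{r}$, and within each closed chamber Proposition~\ref{proplimomega} (or its companion) delivers a fixed affine expression. Piecing these affine pieces together yields the required globally piecewise linear $f_{1}$ and $f_{2}$.
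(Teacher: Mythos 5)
Your proof is correct and takes essentially the same approach as the paper, which states the result as an immediate consequence of the preceding two propositions and the lemma on the four $W(D_4)$-variants, adding only a brief remark that the valuation is computed via whichever variant is not locally zero. Your explicit packaging $f_1 = A - \tfrac12 f^+$, $f_2 = C + \tfrac12 f^-$ (with $A$, $C$, $f$ taken from the selected variant in each chamber) and the consistency observation at the end supply precisely the detail the paper leaves implicit.
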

In fact we even have explicit, but very ugly, expressions for $f_1$ and $f_2$; indeed, using whichever of the four functions in the previous lemma is not locally zero at a point, will provide us with the values of $f_1$ and $f_2$. In specific cases (i.e. when we are given $\alpha$, $\beta$ and $\gamma$) we can with relative ease (by checking all four cases) find an expression for the valuation and leading coefficients of $\Omega_{\lambda/\kappa}$. 

\subsection{Limits of Biorthogonal functions}
Now we consider what limits we can obtain from the biorthogonal functions. We consider here limits of
\[
\tilde R_{\lambda}^{(n)} (z_i p^{\zeta}; t_0p^{\alpha_0}: t_1p^{\alpha_1},t_2p^{\alpha_2},t_3p^{\alpha_3};u_0p^{\gamma_0}, u_1p^{\gamma_1};q,t;p),
\]
where the parameters $z_i$, $t_r$, $u_r$, $q$, and $t$ are independent of $p$. 

If we consider the expansion \eqref{eqbiorthousingomega} in terms of $\Omega_{\lambda/\kappa}$, we can determine the valuation of the summands, in particular the summands have valuation $h_1|\lambda| + h_2 |\nu|$ for some piecewise linear functions
$h_1$ and $h_2$ of $\zeta$, $\alpha$ and $\gamma$. We can now use a similar method as for $\Omega_{\lambda/\kappa}$ to determine the valuation of the limit. However because we know the limits of the interpolation functions often form a linearly independent set of functions of $z_i$ we have an extra way of seeing if cancellation might occur when the valuation of all summands is equal. This allows us to simplify the argument.
\begin{proposition}
Suppose $t^{2(n-1)}t_0t_1t_2t_3u_0u_1=pq$. 
Write $t_r=\tilde t_r p^{\alpha_r}$, $u_r=\tilde u_r p^{\gamma_r}$, $z_i=\tilde z_i p^{\zeta}$, and
$v=\tilde v p^{\nu}$, where $\tilde t_r$, $\tilde u_r$, $\tilde z_i$, $t$, $q$, and $\tilde v$ are independent of $p$. Let $h_1$ and $h_2$ be piecewise linear functions such that (with $s=1/\sqrt{pqu_0u_1}$)
\begin{multline*}
val\left(
\Omega_{\lambda/\nu}(s, \frac{u_0s}{t^{n-1}t_0 }; 
\frac{pqs}{t_0t_1t^{n-1}}, \frac{pqs}{t_0t_2t^{n-1}}, \frac{pqs}{t_0t_3t^{n-1}}, u_0vs;q,t;p)
C_\nu^0( \frac{pq}{u_0t_0}, \frac{pq}{u_0t_1}, \frac{pq}{u_0t_2}, \frac{pq}{u_0t_3}) 
R_{\nu}^{*(n)}(z_i;v,u_0) \right)
\\ = h_1(\zeta;\alpha;\gamma;\nu)|\lambda|+h_2(\zeta;\alpha;\gamma;\nu) |\nu|.
\end{multline*}
\begin{itemize}
\item If $h_2<0$ then 
\begin{multline*}
lc(\tilde R_{\lambda}^{(n)}(z_i;t_0{}:{}t_1,t_2,t_3;u_0,u_1)) \\= 
lc\left(\frac{C_{\lambda}^0(\frac{pqt^{n-1}t_0}{u_0},\frac{pq}{u_0t_0}, \frac{pq}{u_0t_1}, \frac{pq}{u_0t_2}, \frac{pq}{u_0t_3})}{C_{\lambda}^0(\frac{pq}{vu_0}, t^{n-1}t_0t_1,t^{n-1}t_0t_2,t^{n-1}t_0t_3, \frac{1}{t^{n-1}t_0u_1})} 
\frac{C_{\lambda}^+(\frac{1}{u_0u_1})}
{C_{\lambda}^+(    \frac{t^{n-1}  v}{u_0} )}
R_{\lambda}^{*(n)}(z_i;v,u_0) \right) 
\end{multline*}
and the valuations are also equal.
\item If $h_2=0$ and $(\zeta;\nu;\gamma_0)$ is not in (a shift of the) interior of the octahedron (from Proposition \ref{propliminterpolation}) then 
\begin{align*}
lc(\tilde R_{\lambda}^{(n)}(z_i;t_0{}:{}t_1,t_2,& t_3;u_0,u_1))  = 
\sum_{\nu \subset \lambda} lc\bigg(
\frac{C_{\lambda}^0(\frac{pqt^{n-1}t_0}{u_0})C_\nu^0( \frac{pq}{u_0t_0}, \frac{pq}{u_0t_1}, \frac{pq}{u_0t_2}, \frac{pq}{u_0t_3})}{C_{\lambda}^0(\frac{pq}{vu_0}, t^{n-1}t_0t_1,t^{n-1}t_0t_2,t^{n-1}t_0t_3, \frac{1}{t^{n-1}t_0u_1})} \\ & \qquad  \times 
\Omega_{\lambda/\nu}(s, \frac{u_0s}{t^{n-1}t_0 }; 
\frac{pqs}{t_0t_1t^{n-1}}, \frac{pqs}{t_0t_2t^{n-1}}, \frac{pqs}{t_0t_3t^{n-1}}, u_0vs;q,t;p)
R_{\nu}^{*(n)}(z_i;v,u_0)\bigg)
\end{align*}
and the valuation of the biorthogonal function equals the valuation of each of the summands.
\item If $h_2>0$ then 
\begin{multline*}
lc(\tilde R_{\lambda}^{(n)}(z_i;t_0{}:{}t_1,t_2,t_3;u_0,u_1)) = 
lc\bigg(\frac{C_{\lambda}^0(\frac{pqt^{n-1}t_0}{u_0})}{C_{\lambda}^0(\frac{pq}{vu_0}, t^{n-1}t_0t_1,t^{n-1}t_0t_2,t^{n-1}t_0t_3, \frac{1}{t^{n-1}t_0u_1})}
\\ \times \Omega_{\lambda/0}(s, \frac{u_0s}{t^{n-1}t_0 }; 
\frac{pqs}{t_0t_1t^{n-1}}, \frac{pqs}{t_0t_2t^{n-1}}, \frac{pqs}{t_0t_3t^{n-1}}, u_0vs;q,t;p) \bigg)
\end{multline*}
and the valuations are also equal.
\end{itemize}
\end{proposition}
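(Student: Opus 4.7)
The plan is to apply Corollary~\ref{corsumequalval} (extended to finitely many summands in the obvious way) to the expansion \eqref{eqbiorthousingomega}, which writes $\tilde R_\lambda^{(n)}$ as a finite sum over $\nu\subset\lambda$ of summands whose valuations are $h_1|\lambda|+h_2|\nu|$ by hypothesis. Since the term $h_1|\lambda|$ is common to every summand, the sign of $h_2$ alone determines which summand is dominant.

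When $h_2<0$ the valuation is uniquely minimized at $\nu=\lambda$, and when $h_2>0$ it is uniquely minimized at $\nu=0$. In either situation the first bullet of Corollary~\ref{corsumequalval}, applied inductively over the finitely many $\nu\subset\lambda$, shows that the leading coefficient and valuation of $\tilde R_\lambda^{(n)}$ coincide with those of the single dominant summand. For $\nu=0$ no further simplification is needed since $\Omega_{\lambda/0}$ already appears in its generic form in the stated formula. For $\nu=\lambda$ the sum in Definition~\ref{defomega} collapses to the single term $\mu=\lambda$ (because $\lambda\subset\mu\subset\lambda$ forces $\mu=\lambda$); inserting the explicit value of $\binom{\lambda}{\lambda}_{[a,b]}$ from \cite[(4.8)]{RainsBCn}, cancelling the paired $C_\lambda^0(v_r/b)$ and $C_\lambda^0(pqa/v_r)$ factors, and substituting $s=1/\sqrt{pqu_0u_1}$ with the prescribed $v_r$'s yields exactly the closed-form expression written in the proposition.

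The substantive case is $h_2=0$, where all summands share the common valuation $h_1|\lambda|$ and cancellation of leading coefficients could a priori push the valuation of $\tilde R_\lambda^{(n)}$ strictly above $h_1|\lambda|$. To rule this out, observe that the $\nu$-th summand factors as a scalar prefactor (built from $C^0$-symbols and $\Omega_{\lambda/\nu}$, whose leading coefficients are computable by the previous subsection) times the interpolation function $R_\nu^{*(n)}(z_i;v,u_0)$. The exponent triple governing the limit of the interpolation factor is precisely $(\zeta;\nu;\gamma_0)$, where the middle coordinate records the scaling chosen for $v$. By the hypothesis this triple lies outside the $G$-orbit of the interior of the octahedron of Figure~\ref{fig1}, so Proposition~\ref{propliminterpolation}(4) asserts that the limits $R_{\nu,(\zeta,\nu,\gamma_0)}^{*(n)}(z_i;v,u_0)$ form a linearly independent family of functions of $z_i$. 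Consequently no cancellation among the leading coefficients is possible for generic parameter values, and the leading coefficient of $\tilde R_\lambda^{(n)}$ is the sum of the leading coefficients of the individual summands.

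The main obstacle is precisely the $h_2=0$ case: the bare valuation-comparison gives no control, and we have no closed form for the full sum. What makes the argument work is the linear independence supplied by Proposition~\ref{propliminterpolation}(4), which is the structural reason for the octahedron-avoidance hypothesis in the statement. The other two cases are essentially mechanical applications of Corollary~\ref{corsumequalval} together with the degenerate evaluation of $\Omega_{\lambda/\lambda}$.
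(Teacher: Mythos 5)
Your proposal is correct and follows essentially the same route as the paper's own proof: identify the (unique) dominant summand in the $\Omega$-expansion when $h_2\neq 0$ and simplify the $\nu=\lambda$ term via the single-term $\Omega_{\lambda/\lambda}$ and the explicit value of $\binom{\lambda}{\lambda}$; and when $h_2=0$, use linear independence of the limiting interpolation functions (Proposition~\ref{propliminterpolation}(4), predicated on avoiding the octahedron) to rule out cancellation. The only minor point is that you should note that at least one scalar coefficient in the $h_2=0$ sum is nonzero (e.g.\ the $\nu=\lambda$ term), so that linear independence actually forces the sum of leading coefficients to be nonzero; the paper is equally terse on this, so no substantive gap.
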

It should be noted that we do not cover all possible cases in the proposition above, in particular we do not claim anything for the case $h_2= 0$ and $(\zeta;\nu;\gamma_0)$ in (a shift of the) interior of the octahedron (from Proposition \ref{propliminterpolation}). 
\begin{proof}
If $h_2<0$ then the valuation of the summands in the expansion \eqref{eqbiorthousingomega} of the biorthogonal function in terms of $\Omega_{\lambda/\nu}$ is minimized at $\nu=\lambda$, so we know the leading coefficient is just the leading coefficient of the $\nu=\lambda$ term. We simplified that term by using that $\Omega_{\lambda/\lambda}$ is a sum of just a single term, and again using the equation for $\binom{\lambda}{\lambda}$. If on the other hand $h_2>0$ then the leading coefficient of the biorthogonal function equals the leading coefficient of the $\nu=0$ summand. 

Finally if $h_2=0$ the valuations of all summands are equal, however if $(\zeta;\nu;\gamma_0)$ is not in (a shift of) the interior of the octahedron, we see that the sum of the leading coefficients of the summands cannot vanish, as the limits of the interpolation functions form a linearly independent family (as functions of $z$). Thus the valuation of our biorthogonal function must equal the valuation of the individual summands, and the leading coefficient equals the sum of the leading coefficients of the summands. 
\end{proof}
Now note that we can always choose $\nu$ such that $(\zeta;\nu;\gamma_0)$ is not in the interior of the octahedron, for example by taking $\nu=\zeta$. In those cases this proposition exactly tells us what the limit of the biorthogonal function is. In particular, in that case, we see that the limits form an independent family of functions of the $z_i$ if and only if $h_2\leq 0$, which is equivalent to the condition that the limit of $\tilde R_{\lambda}^{(n)}$ depends on $z_i$ at all (for any $\lambda \neq 0$). Thus we can determine which limits form such a family (and therefore have a shot at being one part in a pair of biorthogonal functions) by considering whether the limits of the univariate biorthogonal functions with those parameters depend on $z$. And this is precisely the situation we studied in \cite[Section 4]{vdBRuniv}.

When writing down explicit limits, it is better to first try whether the cases $v=t_r$ and $\nu = \alpha_r$, ($0\leq r\leq 3$) work, as this would lead to an expression of the limit as a single series (over partitions) (using the evaluation from Theorem \ref{thmomegaeval}). It might be that the vector $(\zeta; \alpha_r;\gamma_0)$ is always in the interior of the octahedron, in which case we have no choice but to use a different $v$ and expand the leading coefficient as a double series, but fortunately this happens only very rarely.

The final question is when the limit of these families of biorthogonal functions still form a biorthogonal family of functions. 
This can only be true if both families of biorthogonal families are still $z$-dependent, and if the valuation of the norms is correct. 
This condition is easily seen to be equivalent to this condition in the univariate case. Therefore the question when the limit can still form a biorthogonal system reduces to the same combinatorial problem as in the univariate case.
In particular \cite[Theorem 5.2]{vdBRuniv} also holds for multivariate biorthogonal functions. 
A complete list of all possible limits is given in loc. cit. Here we just like to remark that this includes multivariate versions of all orthogonal polynomials in the $q$-Askey scheme, and the correspondence of $\alpha$-vectors to families of polynomials in the $q$-Askey scheme is given by the table in \cite[Section 7]{vdBRuniv}. In the next section we consider another interesting case, the Pastro polynomials.

\section{Pastro Polynomials}\label{secpastro}
In this section we study the special points in the degeneration scheme for which the limits are families of biorthogonal polynomials (i.e. outside the $q$-Askey scheme where we have orthogonality). These polynomials are multivariate analogues of the polynomials studied by Pastro \cite{Pastro}. Specializing the parameters in the correct way gives us the only example of orthogonal polynomials on the unit circle (as opposed to the real line for the $q$-Askey scheme) contained in our degeneration scheme: the Macdonald polynomials \cite{Macdonald}.

The top level of these polynomials is $1111pp$, associated to the vector $\alpha=(-\frac14,0,\frac14,\frac12;0,\frac12)$ 
For the first function 
$\tilde R_{\lambda}^{(n)}(zp^{-\frac14};t_0 p^{-\frac14}, t_1,t_2p^{\frac14},t_3p^{\frac12};u_0,u_1p^{\frac12})$ the valuation turns out to be zero, and we get three different representations, corresponding to the expansion \eqref{eqbiorthousingomega} with respect to the parameters $v=t_0$, $v=t_2$, and $v=t_3$, which make the $\Omega$ evaluate by the bulk difference equation. Note that the expansion with $v=t_1$ does not give a nice limit, as in this case, the limiting interpolation function are inside the octahedron, and hence do not depend on $z$.
Thus we get the following expansions (you can find the definitions of the limiting binomial coefficients and interpolation functions in Appendix \ref{apA}, in particular $R_{T,\mu}^{*(n)}$ are just the Macdonald polynomials).
\begin{align*}
P_{\lambda}^{(n)} &:= lc\left(  \tilde R_{\lambda}^{(n)}(zp^{-\frac14};t_0 p^{-\frac14}, t_1,t_2p^{\frac14},t_3p^{\frac12};u_0,u_1p^{\frac12}) \right) \\
&= \sum_{\mu\subset \lambda} \binom{\lambda}{\mu}_{F2} \frac{\tilde C_\mu^-(t) \tilde C_{\mu}^0(\frac{q}{t_1u_0})}{\tilde C_{\mu}^0(t^n) \tilde C_{\mu}^0(t^{n-1}t_0t_2)} R_{F2,\mu}^{*(n)}(z_i^{-1};t_0) q^{n(\mu')} t^{-2n(\mu)} \left(-t^{n-1} t_0\right)^{|\mu|} \\
&= \left( \frac{q}{t_1u_0} \right)^{|\lambda|}
\sum_{\mu\subset \lambda} \binom{\lambda}{\mu}_{F1} \frac{\tilde C_{\mu}^-(t) \tilde C_{\mu}^0(\frac{q}{t_1u_0})}
{\tilde C_\mu^0(t^n) \tilde C_{\mu}^0(t^{n-1}t_0t_2)} R_{F1,\mu}^{*(n)}(z_i^{-1},t_2) q^{-n(\mu')} \left( -\frac{t^{n-1}t_0t_1u_0}{q}\right)^{|\mu|}, \\
&= \frac{C_{\lambda}^0(\frac{1}{t^{n-1}t_3u_1})}{C_{\lambda}^0(t^{n-1}t_0t_2)} 
\left( \frac{ q}{t_1u_0}\right)^{|\lambda|}
\sum_{\mu \subset \lambda} \binom{\lambda}{\mu}_{E2, [\frac{1}{t^{n-1}t_3u_1}]} 
\frac{\tilde C_{\mu}^-(t) \tilde C_{\mu}^0(\frac{q}{t_1u_0})}{\tilde C_{\mu}^0(t^n) \tilde C_{\mu}^0(t^{n-1}t_3u_1)}
R_{T,\mu}^{*(n)}(z_i^{-1}) t^{-n(\mu)} t_2^{-|\mu|}, \\
\end{align*}
Univariately these three expansions correspond to 
\[
P_l^{(1)} = \frac{(1/t_3u_1;q)_l}{(t_0t_2;q)_l} \left( \frac{q}{t_1u_0}\right)^l {}_2\phi_1 \left( \begin{array}{c} \frac{q}{t_1u_0}, q^{-l} \\ q^{1-l}t_3u_1\end{array} ;q, \frac{q}{t_2z} \right),
\]
(if one considers the last expansion), and expressions of this series as ${}_3\phi_2$, and as a ${}_3\phi_1$, which are related to each other by \cite[(III.6) and (III.7)]{GR}. 

Notice that the parameters of the function only appear in certain combinations. In particular if we write $A=\frac{q}{t^{n-1}t_1u_0}$, $B=\frac{q}{t^{n-1}t_3u_1}$ and $w_i=\frac{q^{1/2}}{t^{n-1}t_2t_3u_1z_i }$ we can define (using the homogeneity of the Macdonald polynomials $R_{T,\mu}^{*(n)}$)
\begin{align*}
p_{\lambda}^{(n)}(w;A,B) &= 
\frac{C_{\lambda}^0(\frac{B}{q})}{C_{\lambda}^0(\frac{t^{n-1}AB}{q})} 
\left( A t^{n-1}\right)^{|\lambda|}
\sum_{\mu \subset \lambda} \binom{\lambda}{\mu}_{E2, [\frac{B}{q}]} 
\frac{\tilde C_{\mu}^-(t) \tilde C_{\mu}^0(At^{n-1})}{\tilde C_{\mu}^0(t^n) \tilde C_{\mu}^0(\frac{q}{B})}
R_{T,\mu}^{*(n)}(\frac{w_i q^{1/2}}{B}) t^{-n(\mu)} 
\end{align*}
and note 
\[
P_{\lambda}^{(n)}(z_i;t_0:t_1,t_2,t_3;u_0,u_1) = p_{\lambda}^{(n)}(\frac{q^{1/2}}{t^{n-1}t_2t_3u_1z_i };\frac{q}{t^{n-1}t_1u_0},\frac{q}{t^{n-1}t_3u_1}).
\]
For the right hand family (the functions with $u_0$ and $u_1$ interchanged) we notice that by the symmetries from Lemma \ref{lemRtildeabel}, and the permutation symmetry in the $t_r$'s we have 
\begin{align*}
\tilde R_{\lambda}^{(n)}(zp^{-\frac14};t_0 p^{-\frac14}, t_1,t_2p^{\frac14},t_3p^{\frac12};u_1p^{\frac12},u_0)
&= \tilde R_{\lambda}^{(n)}(zp^{\frac14};t_0 p^{\frac14}, t_1p^{\frac12},t_2p^{-\frac14},t_3;u_1,u_0p^{\frac12})
\\& = \tilde R_{\lambda}^{(n)}(z^{-1}p^{-\frac14};t_0 p^{\frac14}, t_1p^{\frac12},t_2p^{-\frac14},t_3;u_1,u_0p^{\frac12})
\\ &= 
\frac{\tilde R_{\lambda}^{(n)}(z^{-1}p^{-\frac14};t_2 p^{-\frac14}, t_3,t_0p^{\frac14},t_1p^{\frac12};u_1,u_0p^{\frac12})}
{\Delta_{\lambda}^0(p^{-\frac12}\frac{1}{u_0u_1}~|~ p^{\frac34} t^{n-1} t_0t_1, p^{\frac14} t^{n-1}t_0t_3, p^{-\frac34} \frac{1}{t^{n-1}t_0u_0}, p^{\frac34} \frac{qt^{n-1}t_2}{u_1}) }
\end{align*}
As a corollary we see that the limit on the right hand side is up to a constant equal to the limit on the left hand side with different parameters. Indeed we have
\begin{align*}
Q_{\lambda}^{(n)} &:= lc\left( \tilde R_{\lambda}^{(n)}(zp^{-\frac14};t_0 p^{-\frac14}, t_1,t_2p^{\frac14},t_3p^{\frac12};u_1p^{\frac12},u_0) \right) 
=  \left( \frac{1}{t^{2(n-1)}u_0t_0t_1t_2}\right)^{|\lambda|}  P_{\lambda}^{(n)}(z_i^{-1};t_2,t_3,t_0,t_1;u_1,u_0),
\end{align*}
and the corresponding valuation is 0.
In particular, setting
\[
q_{\lambda}^{(n)}(w_i;A,B) := p_{\lambda}^{(n)}(\frac{1}{w_i};B,A),
\]
we have 
\[
Q_{\lambda}^{(n)}(z_i;t_0:t_1,t_2,t_3;u_0,u_1) = 
\left( \frac{q}{t_3u_1}\right)^{-|\lambda|}
q_{\lambda}^{(n)}(\frac{q^{1/2}}{t^{n-1}t_2t_3u_1z_i };\frac{q}{t^{n-1}t_1u_0},\frac{q}{t^{n-1}t_3u_1}).
\]
Note that this is the same parameter correspondence as between $P_{\lambda}$ and $p_{\lambda}$. As already shown in 
\cite{vdBRuniv}, the univariate instances $p_{l}^{(1)}$ and $q_{l}^{(1)}$ are equal to the Pastro polynomials \cite[(3.1)]{Pastro}.

We would like to consider a special case. Indeed we want to specialize $t_3u_1t^{n-1}\to 1$, or equivalently $B\to q$. The simplest expression to do this in is in the expansion of $P_\lambda^{(n)}$ in terms of the Macdonald polynomials $R_{T,\mu}^{*(n)}$, and to use $p_{\lambda}$. We cannot just substitute $B=q$ as for example the term $\tilde C_{\mu}^0(q/B)$ in the denominator would vanish. However, taking the limit $p\to 0$ in \cite[(4.3)]{RainsBCn} with $a\to ap^{\alpha}$ (and $0<\alpha<1$) shows that
\[
\left. \binom{\lambda}{\mu}_{E2,[b]} \frac{\tilde C_{\lambda}^0(b)}{\tilde C_{\mu}^0(1/b)} \right|_{b=1} = \delta_{\lambda,\mu} 
(-aq)^{|\mu|-|\lambda|},
\]
so plugging this in we see that the sum reduces to a single term and we get 
\[
p_{\lambda}^{(n)}(w_i;A,q) = \frac{\tilde C_{\lambda}^-(t)}{\tilde C_{\lambda}^0(t^n)} \left( Aq^{-1/2}t^{n-1}\right)^{|\lambda|} t^{-n(\lambda)}
R_{T,\lambda}^{*(n)}(w_i),
\]
thus up to a constant these are just the Macdonald polynomials. If we also want $Q_{\lambda}^{(n)}$ to also be the Macdonald polynomials, we should also specialize $u_0t_1t^{n-1}\to 1$, or equivalently $A\to q$.

The limit of the biorthogonality relation gives us
\begin{align*}
\langle P_{\lambda}^{(n)}(\cdot; t_r;u_r;t,q), Q_{\mu}^{(n)}(\cdot;t_r;u_r;t,q) \rangle = 
\delta_{\lambda,\mu} t^{-2n(\lambda)} (t_1u_0)^{-|\lambda|} 
\frac{\tilde C_\lambda^-(q,t)}{\tilde C_{\lambda}^0(t^n,t_0t_2t^{n-1})}.
\end{align*}
where 
\begin{multline*}
\langle f,g\rangle = \frac{(q;q)^n}{n! (t;q)^n} 
\prod_{j=1}^n \frac{(t^j, t^{n-j} t_0t_2;q)}{(qt^{j-n}/t_1u_0,qt^{j-n}/t_3u_1;q)} 
\int_{C^n} f(z)g(z) \prod_{1\leq j<k\leq n} \frac{(z_j/z_k,z_k/z_j;q)}{(tz_j/z_k,tz_k/z_j;q)} 
\\ \times \prod_{j=1}^n \frac{\theta(qt^{1-n}z_j/t_0t_1u_0;q)}{(t_0/z_j,t_2z_j;q)} \frac{dz_j}{2\pi i z_j},
\end{multline*}
or in terms of $p_{\lambda}$ and $q_{\lambda}$ this becomes
\[
\langle p_{\lambda}^{(n)}(\cdot;A,B), q_{\mu}^{(n)}(\cdot;A,B) \rangle = \delta_{\lambda,\mu} t^{-2n(\lambda)} \left( \frac{ABt^{2(n-1)}}{q}\right)^{|\lambda|} \frac{\tilde C_{\lambda}^-(q,t)}{\tilde C_{\lambda}^0(t^n,\frac{ABt^{n-1}}{q})},
\]
where
\begin{multline*}
\langle f,g\rangle = \frac{(q;q)^n}{n! (t;q)^n} 
\prod_{j=1}^n \frac{(t^j, \frac{ABt^{2n-1-j}}{q};q)}{(At^{j-1},Bt^{j-1};q)} 
\int_{C^n} f(w)g(w) \prod_{1\leq j<k\leq n} \frac{(w_j/w_k,w_k/w_j;q)}{(tw_j/w_k,tw_k/w_j;q)} 
\\ \times \prod_{j=1}^n \frac{\theta(q^{1/2}w_j ;q)}{(Aw_j/q^{1/2} ,B/w_jq^{1/2};q)} \frac{dw_j}{2\pi i w_j}.
\end{multline*}
The contours here are chosen with conditions similar to the condition in Theorem \ref{thmbiorthogonality}, that is, $C=C^{-1}$ is a deformation of the unit circle, which includes $tC$ and the poles at $z_j=t_0q^{\mathbb{Z}_{\geq 0}}$, respectively $w_j = Bq^{-\frac12+\mathbb{Z}_{\geq 0}}$, and excludes the poles at $z_j=t_2^{-1}q^{\mathbb{Z}_{\leq 0}}$, respectively $w_j = A^{-1} q^{\frac12+\mathbb{Z}_{\leq 0}}$. 
In the univariate case this inner product relation reduces indeed to the biorthogonality \cite[(3.2)]{Pastro} of the Pastro polynomials\footnote{Pastro has a necessary condition on the parameters which we lack, the difference is that he insists that the contour is the unit circle, while we look at more general contours (which are allowed to make detours to include points outside the unit circle and exclude points inside the unit circle)}. Observe that the specialization $A,B\to q$ turns this measure into the measure of the Macdonald polynomials, in particular the univariate part of the measure is in that case just the constant 1.

We can also take the limit in the evaluation duality \eqref{eqduality}. There are two distinct evaluation dualities we can get in the limit, we can evaluate at a sequence based around $t_0$, or evaluate at a sequence based around $t_2$ (evaluating at $t_1$ or $t_3$ would not lead to an equation for Pastro polynomials).
Taking the leading coefficient in 
\[
\tilde R_{\lambda}^{(n)}(t_0t^{n-i} q^{\kappa_i} p^{-\frac14};t_0p^{-\frac14}:t_1,t_2p^{\frac14},t_3p^{\frac12};u_0,u_1p^{\frac12}) 
= \tilde R_{\kappa}^{(n)}(\hat t_0t^{n-i} q^{\lambda_i} p^{-\frac14};\hat t_0p^{-\frac14}:\hat t_1,\hat t_2p^{\frac14},\hat t_3p^{\frac12};\hat u_0,\hat u_1p^{\frac12}) 
\]
leads to 
\[
P_{\lambda}^{(n)}(t_0 t^{n-i} q^{\kappa_i}; t_0,t_1,t_2,t_3;u_0,u_1) = 
P_{\kappa}^{(n)}(\hat t_0 t^{n-i} q^{\lambda_i};\hat t_0,\hat t_1,\hat t_2,\hat t_3;\hat u_0, \hat u_1),
\]
where the dual parameters are given by
\[
\hat t_0:= \sqrt{t_0t_1t_2t_3/q}, \qquad 
\hat t_r:= \frac{t_0t_r}{\hat t_0}, \qquad 
\hat u_r:= \frac{\hat t_0u_r}{ t_0}.
\]
This is equivalent to 
\[
p_{\lambda}^{(n)}(\frac{q^{\frac12}}{A t^{n-i} q^{\kappa_i}};A,B) = p_{\kappa}^{(n)}(\frac{q^{\frac12}}{A t^{n-i} q^{\lambda_i}};A,B).
\]
The equation based at $t_2$ leads to 
\[
\left( \frac{1}{At^{n-1}} \right)^{|\lambda|} p_{\lambda}^{(n)}(\frac{B}{q^{\frac12}} t^{n-i} q^{\kappa_i};A,B) =
\left( \frac{1}{At^{n-1}} \right)^{|\kappa|} p_{\kappa}^{(n)}(\frac{B}{q^{\frac12}} t^{n-i} q^{\lambda_i};A,B).
\]
This second evaluation duality, when specialized at $B=q$, reduces to the evaluation duality of the Macdonald polynomials.

In this case it turns out that the evaluation duality gives an equation between the same functions (evaluated at different points). In general we would get an evaluation duality where both sides of the question are different functions (i.e. the dual parameters are not associated with the same limiting family of biorthogonal functions).

Let us finally consider the difference operators. First we consider the equations $D_q^{(n)}\tilde R_{\lambda}^{(n)} = \tilde R_{\lambda}^{(n)}$ where the parameters of the two $\tilde R_{\lambda}^{(n)}$'s are related by a $q$-shift. Expanding $D_q^{(n)}$ as a sum of $2^n$ terms one quickly sees that the valuation of the individual terms only depend on $\sigma \in \{\pm 1\}^n$ only through $|\sigma| = \sum_i \sigma_i$. In particular there are three options: Either the term with $\sigma=(1,1,\ldots,1)$ dominates, or the term with $\sigma=(-1,-1,\ldots,-1)$ dominates or all terms have the same valuation. In the latter case we have to worry about cancellation, where the valuation of the individual terms might be lower than the valuation of the sum. It turns out this does not happen, which we can easily check as we know the valuation of $D_q^{(n)}\tilde R_{\lambda}^{(n)}$. For example, taking the difference equation for $D_q^{(n)}(u_0,t_0,t_1)$ and taking the direct limit in each of the $2^n$ terms leads to 
\begin{align*}
& \frac{1}{\prod_{i=1}^{n} t^{2(n-i)} t_0^2t_1u_0 (1-t^{n-i}u_0t_1)} 
\sum_{\sigma\in \{\pm 1\}^n} 
\prod_{i:\sigma_i=1} u_0t_0t_1z_i(1-\frac{t^{n-1}u_0t_0t_1}{z_i})
\prod_{i:\sigma_i=-1} (-t^{n-1}u_0t_0t_1z_i) (1-\frac{t_0}{z_i}) 
\\ & \qquad \times t^{\binom{(|\sigma|+n)/2}{2}} \prod_{i:\sigma_i=-1} \prod_{j:\sigma_j=1} \frac{1-tz_j/z_i}{1-z_j/z_i}
P_{\lambda}^{(n)}(q^{\sigma_i/2}z_i;q^{1/2}t_0:q^{1/2}t_1,q^{-1/2}t_2,q^{-1/2}t_3;q^{1/2}u_0,q^{-1/2}u_1) \\ & =
P_{\lambda}^{(n)}(z_i;t_0:t_1,t_2,t_3;u_0,u_1),
\end{align*}
where the $t^{\binom{(|\sigma|+n)/2}{2}}$ comes from the cross terms with $\sigma_i=\sigma_j=1$.
Recall that there are $6=\binom{4}{2}$ different difference operators like this, one for each pair of $t$-parameters.
The difference equations look simpler when expressed in the rescaled polynomials $p_{\lambda}$ so in the list below we will use those.  
\begin{itemize}
\item $D_q^{(n)}(u_0,t_0,t_1)$:
\begin{align*}
& \frac{t^{-2\binom{n}{2}}}{\prod_{i=1}^{n} (1-qt^{1-i}/A)} 
\sum_{\sigma\in \{\pm 1\}^n} 
\prod_{i:\sigma_i=1} \frac{q^{1/2}}{Aw_i}(1-q^{1/2}w_i)
\prod_{i:\sigma_i=-1} t^{n-1} (1-\frac{q^{1/2}}{Aw_i}) 
\\ & \qquad \times t^{\binom{(|\sigma|+n)/2}{2}} \prod_{i:\sigma_i=-1} \prod_{j:\sigma_j=1} \frac{1-tw_j/w_i}{1-w_j/w_i}
p_{\lambda}^{(n)}(w_iq^{(3-\sigma_i)/2};A/q,Bq) =
p_{\lambda}^{(n)}(w_i;A,B),
\end{align*}
\item $D_q^{(n)}(u_0,t_0,t_2)$:
\begin{align*}
& \frac{t^{-\binom{n}{2}}}{\prod_{i=1}^{n} (1-\frac{t^{n-i}AB}{q})}
\sum_{\sigma \in \{\pm 1\}^n}
\prod_{i:\sigma_i=1} (1-\frac{B}{q^{1/2}w_i}) 
\prod_{i:\sigma_i=-1} t^{n-1}\frac{B}{q^{1/2}w_i} (1-\frac{Aw_i}{q^{1/2}})
\\ & 
 \times t^{\binom{(|\sigma|+n)/2}{2}} \prod_{i:\sigma_i=-1} \prod_{j:\sigma_j=1} \frac{1-tw_j/w_i}{1-w_j/w_i}
 p_{\lambda}^{(n)}(w_iq^{(1-\sigma_i)/2};A,Bq) = p_{\lambda}^{(n)}(w_i;A,B)
\end{align*}
\item $D_q^{(n)}(u_0,t_0,t_3)$, in this case the term with $\sigma_i=1$ for all $i$ is dominant in the limit, and the resulting ``difference'' equation of one term is the trivial equation $p_{\lambda}^{(n)}(w_i;A,B)= p_\lambda^{(n)}(w_i;A,B)$.
\item $D_q^{(n)}(u_0,t_1,t_2)$: In this case we have to be careful about the different normalization if we interchange $t_0$ and one of the other $t$-variables. This accounts for the $q^{-|\lambda|}$ term on the right hand side.
\begin{align*}
\frac{1}{\prod_{i=1}^{n} (1-\frac{qt^{1-i}}{A})}& \sum_{\sigma\in \{\pm 1\}^n}
\prod_{i:\sigma_i=1} (-\frac{q}{At^{n-1}})(1-\frac{B}{w_iq^{1/2}}) 
\prod_{i:\sigma_i=-1} (1-\frac{q^{1/2}B}{Aw_i}) \\ & 
\times t^{\binom{(|\sigma|+n)/2}{2}} \prod_{i:\sigma_i=-1} \prod_{j:\sigma_j=1} \frac{1-tw_j/w_i}{1-w_j/w_i}
p_{\lambda}^{(n)}(q^{(1-\sigma_i)/2} w_i;A/q,Bq) = 
q^{-|\lambda|} p_{\lambda}^{(n)}(w_i;A,B)
\end{align*}
\item $D_q^{(n)}(u_0,t_1,t_3)$ 
\begin{align*}
\frac{1}{\prod_{i=1}^{n} (1-\frac{qt^{1-i}}{A})}& \sum_{\sigma\in \{\pm 1\}^n}
\prod_{i:\sigma_i=1} (-\frac{q}{At^{n-1}}) t^{\binom{(|\sigma|+n)/2}{2}} \\ & 
\times  \prod_{i:\sigma_i=-1} \prod_{j:\sigma_j=1} \frac{1-tw_j/w_i}{1-w_j/w_i}
p_{\lambda}^{(n)}(q^{(1-\sigma_i)/2} w_i;A/q,B) = 
q^{-|\lambda|} \frac{\tilde C_{\lambda}^0(\frac{t^{n-1}AB}{q})}{\tilde C_{\lambda}^0(\frac{t^{n-1}AB}{q^2})}p_{\lambda}^{(n)}(w_i;A,B)
\end{align*}
\item $D_q^{(n)}(u_0,t_2,t_3)$. Again we have one dominating term, and the equation becomes trivial. 
\end{itemize}
Curiously enough, if we consider the first and fourth of these equations, we have two difference operators with different shifts of the parameter (either by $(q^2,q)$, respectively $(q,1)$) acting on the same function $p_{\lambda}^{(n)}(\cdot;A/q,Bq)$ giving the same result up to a generalized eigenvalue. This is in sharp contrast to the situation for general biorthogonal functions in our scheme, in which generalized eigenvalue problems only arise from products of pairs of such operators.
Another remark is that the fifth difference equation preserves $B$, so we can specialize $B=q$ on both sides and obtain a difference equation for Macdonald polynomials. This operator is Macdonald's original difference operator \cite[(3.2)]{Macdonald}, of which the Macdonald functions are eigenfunctions.

We have two more difference operators, namely $D_-$ and $D_+$. These also have proper limits and we get as limit for $D_-$ that
\begin{align*}
& \sum_{\sigma \in \{\pm 1\}^n} \frac{1}{\prod_{i=1}^n w_i}
\frac{q^{n/2}}{A^{n}} (-1)^{(-n-|\sigma|)/2}
  t^{-5\binom{n}{2}-(n-1) (|\sigma|+n)/2+\binom{(|\sigma|+n)/2}{2}}
\\ &\qquad  \times \prod_{i:\sigma_i=-1} \prod_{j:\sigma_j=1} \frac{1-tw_j/w_i}{1-w_j/w_i}
p_{\lambda+1^n}^{(n)}(q^{(1-\sigma_i)/2} w_i;A/q,B) =
\prod_{i=1}^n \frac{(1-\frac{A}{q}t^{i-1})(1-t^{i-n}q^{-\lambda_i-1})}{(1-\frac{AB}{q^2}t^{n-i})}
p_{\lambda}^{(n)}(w_i;A,B)
\end{align*}
The $D_+$ operator moreover gives us
\begin{align*}
\prod_{i=1}^n & \frac{At^{n-1}}{ (1-\frac{AB}{q}t^{n-i})}
\sum_{\sigma \in \{\pm 1\}^n}
\prod_{i:\sigma_i=1} \frac{w_i}{q^{1/2}  t^{n-1}} (1-\frac{B}{w_i q^{1/2}})
\prod_{i:\sigma_i=-1} (1-\frac{Aw_i}{q^{1/2}}) 
\\ & \times t^{\binom{(|\sigma|+n)/2}{2}}
\prod_{i:\sigma_i=-1} \prod_{j:\sigma_j=1} \frac{1-tw_j/w_i}{1-w_j/w_i}
p_{\lambda}^{(n)}(q^{(1-\sigma_i)/2} w_i  ;A,Bq)
= p_{\lambda+1^n}^{(n)}(w_i;A,B).
\end{align*}

\appendix

\section{Limits of interpolation functions}\label{apA}
In what follows we are going to define $c_{\epsilon,\lambda,\kappa}$ for different values of $\epsilon$. These are the coefficients used in the branching rules for different limits of the interpolation functions $R_{\epsilon,\lambda}^{*}$. This convention allows us to define these limiting interpolation functions simultaneously.
\begin{definition}
For any $\epsilon$ for which $c_{\epsilon,\lambda,\kappa}$ is defined we define the corresponding interpolation function $R_{\epsilon,\lambda}^{*(n)}(z_i;a,b;q,t)$ recursively by
setting $R_{\epsilon,0}^{*(0)}=1$ and for $\lambda \neq 0$ setting $R_{\epsilon,\lambda}^{*(0)}=0$, and using the branching rule
\begin{equation}
R_{\epsilon,\lambda}^{*(n+1)}( \ldots, z_i, \ldots, v;a,b;q,t)  = \sum_{\kappa: \kappa \prec' \lambda} c_{\epsilon,\lambda,\kappa}
 R_{\epsilon,\kappa}^{*(n)}(\ldots,z_i,\ldots ;a,b;q,t).
\end{equation}
\end{definition}

As an important part in the definition of $c_{\epsilon,\lambda,\kappa}$ the following expression often arises.
\begin{definition}
For $\mu \not \prec' \lambda$ we set 
$\binon{\lambda}{\mu}_{[a,t];q,t} = 0$. 
For $\mu \prec' \lambda$ we define
\begin{align*}
\binon{\lambda}{\mu}_{[a];q,t} &= \prod_{\substack{(i,j)\in \lambda \\ \lambda_j'=\mu_j'}}
\frac{(1-q^{\lambda_i-j+1}t^{\lambda_j'-i})(1-q^{\mu_i-j}t^{1+\mu_j'-i} )}
{(1-q^{\mu_i-j+1}t^{\mu_j'-i})(1-q^{\lambda_i-j}t^{1+\lambda_j'-i} )}
\frac{\prod_{\substack{(i,j)\in \mu \\ \lambda_j'\neq \mu_j'}} (1-q^{\lambda_i+j}t^{1-\lambda_j'-i} a)(1-q^{\mu_i+j-1}t^{1-\mu_j'-i} a)}
{\prod_{\substack{(i,j)\in \lambda \\ \lambda_j' \neq \mu_j'}} (1-q^{\mu_i+j}t^{-\mu_j'-i} a)(1-q^{\lambda_i+j-1}t^{2-\lambda_j'-i} a)}
\end{align*}
\end{definition}

The definition of this new binomial coefficient was inspired by the following lemma.
\begin{lemma}\label{lemmalimbinon}
Let $\mu \prec' \lambda$. 
Then we find for $0<\alpha<1$
\begin{align*}
val \left(\binon{\lambda}{\mu}_{[a,t];q,t;p} \right) &= 0, \qquad \qquad 
val\left(\binon{\lambda}{\mu}_{[p^{\alpha} a,t];q,t;p} \right) =\alpha( |\lambda|-|\mu|), \\  
  lc\left( \binon{\lambda}{\mu}_{[a,t];q,t;p} \right)& =\frac{\tilde C_{\lambda}^-(t)\tilde C_{\lambda}^+(a)}{\tilde C_{\mu}^-(t)\tilde C_{\mu}^+(a/t)} 
 \frac{\left( -\frac{aq}{t}\right)^{|\lambda|}q^{n(\lambda')}t^{-2n(\lambda) }}{\left(-\frac{aq}{t}\right)^{|\mu|}q^{n(\mu')}t^{-2n(\mu)}} 
\binon{\lambda}{\mu}_{[a];q,t} \\
lc\left(\binon{\lambda}{\mu}_{[p^{\alpha} a,t];q,t;p} \right)& = 
\frac{\tilde C_{\lambda}^-(t)}{\tilde C_{\mu}^-(t)} 
 \frac{\left( -\frac{aq}{t}\right)^{|\lambda|}q^{n(\lambda')}t^{-2n(\lambda) }}{\left(-\frac{aq}{t}\right)^{|\mu|}q^{n(\mu')}t^{-2n(\mu)}} 
\binon{\lambda}{\mu}_{[0];q,t} 
\end{align*}
\end{lemma}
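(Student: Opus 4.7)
The statement reduces to applying the asymptotic formulas for $\theta(xp^\alpha;p)$ recorded before \eqref{eqlimc} to each of the eight theta-quotients appearing in Definition \ref{defbinon}. Among these eight factors, four involve $pq$ (and hence have $p$-scaling $1$ independent of $a$), and four involve $a$; the latter split into two types with $p$-scaling $\alpha$ (the factors with plain $a$, namely Types 1 and 6 of the display) and two with $p$-scaling $1+\alpha$ (the factors with $pqa$, Types 4 and 7). The plan is to handle the cases $\alpha=0$ and $\alpha\in(0,1)$ in parallel, since the bookkeeping is structurally identical and differs only in which of the eight factors retains a $(1-x)$ piece in its leading term.

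For the valuation, summing the formula $\tfrac{1}{2}\{\alpha\}(\{\alpha\}-1)-\tfrac{1}{2}\alpha(\alpha-1)$ over the eight factors gives $0$ when $\alpha=0$, since every scaling is then integral. When $\alpha\in(0,1)$, only Types 4 and 7 have the non-integer scaling $1+\alpha$, each contributing $-\alpha$ per box; noting that $\lambda/\mu$ is a vertical strip, the number of boxes of $\lambda\setminus\mu$ in columns where $\lambda_j'\neq\mu_j'$ equals $|\lambda|-|\mu|$, and accounting for the denominator sign yields $\mathrm{val}=\alpha(|\lambda|-|\mu|)$. For the leading coefficient, I would split each contributing theta's leading coefficient into a polynomial piece $(1-x)$ (present only when the scaling is integral) and a monomial piece consisting of a sign and an inverse monomial in $q,t,a$. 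The polynomial pieces from Types 2, 3, 5, 8 (those not involving $a$) combine: using that for columns with $\lambda_j'=\mu_j'$ the $\lambda$-box set and $\mu$-box set coincide, these four products reassemble into $\tilde C_\lambda^-(t)/\tilde C_\mu^-(t)$ times the first, ``$a$-free'' fraction of $\binon{\lambda}{\mu}_{[a];q,t}$, which is precisely $\binon{\lambda}{\mu}_{[0];q,t}$. In the case $\alpha=0$, the polynomial pieces of Types 1, 4, 6, 7 similarly combine to $\tilde C_\lambda^+(a)/\tilde C_\mu^+(a/t)$ times the remaining ``$a$-dependent'' fraction of $\binon{\lambda}{\mu}_{[a];q,t}$; in the case $\alpha\in(0,1)$ these polynomial pieces are simply absent (Types 1 and 6 have $\mathrm{lc}=1$, and Types 4 and 7 contribute only their monomial piece $-1/x$), which explains the different shape of the two claimed formulas.

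What remains is to check that the product of the monomial pieces equals $(-aq/t)^{|\lambda|-|\mu|}q^{n(\lambda')-n(\mu')}t^{-2(n(\lambda)-n(\mu))}$ in both cases, which is consistent because Types 4 and 7 contribute the same monomial piece $-1/x$ regardless of whether the scaling is $1$ or $1+\alpha$. This reduces to summing exponents of $q$, $t$, $a$ and counting signs over four index sets restricted by $\lambda_j'=\mu_j'$ versus $\lambda_j'\neq\mu_j'$. The $a$-exponent $|\lambda|-|\mu|$ and the overall sign $(-1)^{|\lambda|-|\mu|}$ follow immediately from the vertical-strip count used in the valuation computation. The $q$- and $t$-exponents then follow from the identities $\sum_{(i,j)\in\lambda}(i-1)=n(\lambda)$ and $\sum_{(i,j)\in\lambda}(j-1)=n(\lambda')$ applied separately to $\lambda$ and $\mu$, after using the vertical-strip restriction to extend each restricted sum to the whole partition. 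The main obstacle is this combinatorial bookkeeping: each individual identity is routine, but the cancellations between the four monomial-contributing types are delicate, and one must track signs carefully against the $(-aq/t)^{|\lambda|}/(-aq/t)^{|\mu|}$ normalization so as not to accidentally shift the $q^{n(\lambda')}$ or $t^{-2n(\lambda)}$ exponents.
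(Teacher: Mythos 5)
Your approach is correct and matches the paper's proof in substance: both go through Definition~\ref{defbinon}, apply the theta-function asymptotics $\theta(xp^{\alpha};p)\sim(-1/x)^{\lfloor\alpha\rfloor}\cdot(1-x)^{[\alpha\in\mathbb{Z}]}$ to each of the eight theta factors, and then reassemble the polynomial pieces into the $\tilde C^\pm$ ratios and $\binon{\lambda}{\mu}_{[a]}$ (or $\binon{\lambda}{\mu}_{[0]}$) while collecting the monomial pieces into the $(-aq/t)^{|\lambda|-|\mu|}q^{n(\lambda')-n(\mu')}t^{-2n(\lambda)+2n(\mu)}$ prefactor. The only difference is organizational: the paper first rewrites $\binon{\lambda}{\mu}_{[a,t];q,t;p}$ by factoring out $C_\lambda^\pm/C_\mu^\pm$ and using $\theta(px;p)=-\tfrac1x\theta(x;p)$ to clear all explicit $p$'s, reducing the limit to a single straightforward inspection; you instead take limits on the raw definition and recombine afterward, which is the same computation in a different order. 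One small slip in your opening sentence: Types 4 and 7 carry $pqa$, so when $a\to ap^{\alpha}$ their $p$-scaling is $1+\alpha$, not ``$1$ independent of $a$'' as you initially wrote (you correct this in the very next clause, so this does not affect the argument). The remaining combinatorial bookkeeping you defer is indeed the crux; the paper records the two key identities $\sum_{(i,j)\in\mu}\lambda_i=\sum_{(i,j)\in\lambda}\mu_i$ and $\sum_{(i,j)\in\lambda,\lambda_j'\neq\mu_j'}\mu_j'-\sum_{(i,j)\in\mu,\lambda_j'\neq\mu_j'}(\lambda_j'-1)=n(\lambda)-n(\mu)$ (the second relying on $\lambda/\mu$ being a vertical strip), which are precisely the nontrivial steps behind your ``extend each restricted sum'' remark.
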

\begin{proof}
It is convenient to first rewrite the elliptic binomial coefficient as 
\begin{align*}
\binon{\lambda}{\mu}_{[a,t];q,t;p}
&= \frac{C_{\lambda}^-(t)C_{\lambda}^+(a)}{C_{\mu}^-(t)C_{\mu}^+(a/t)}
\prod_{\substack{(i,j)\in \lambda \\ \lambda_j'=\mu_j'}}
\frac{\theta(q^{\lambda_i-j}t^{\lambda_j'-i} pq,q^{\mu_i-j}t^{1+\mu_j'-i} ;p)}
{\theta(q^{\mu_i-j}t^{\mu_j'-i} pq,q^{\lambda_i-j}t^{1+\lambda_j'-i} ;p)}
\\& \qquad \times 
\frac{\prod_{\substack{(i,j)\in \mu \\ \lambda_j'\neq \mu_j'}}
\theta(q^{\lambda_i+j-1}t^{1-\lambda_j'-i} pq a,q^{\mu_i+j-1}t^{1-\mu_j'-i} a;p)}{\prod_{\substack{(i,j)\in \lambda \\ \lambda_j' \neq \mu_j'}}\theta(q^{\mu_i+j-1}t^{-\mu_j'-i} pqa,q^{\lambda_i+j-1}t^{2-\lambda_j'-i} a;p)}
\\ &= 
\frac{(-a)^{|\lambda|}}{(-a)^{|\mu|}} \frac{q^{n(\lambda')+|\lambda|}}{q^{n(\mu')+|\mu|}}
\frac{t^{2n(\mu)+|\mu|}}{t^{2n(\lambda) + |\lambda|}}
\frac{C_{\lambda}^-(t)C_{\lambda}^+(a)}{C_{\mu}^-(t)C_{\mu}^+(a/t)}
\prod_{\substack{(i,j)\in \lambda \\ \lambda_j'=\mu_j'}}
\frac{\theta(q^{\lambda_i-j+1}t^{\lambda_j'-i},q^{\mu_i-j}t^{1+\mu_j'-i} ;p)}
{\theta(q^{\mu_i-j+1}t^{\mu_j'-i},q^{\lambda_i-j}t^{1+\lambda_j'-i} ;p)}
\\& \qquad \times 
\frac{\prod_{\substack{(i,j)\in \mu \\ \lambda_j'\neq \mu_j'}}
\theta(q^{\lambda_i+j}t^{1-\lambda_j'-i} a,q^{\mu_i+j-1}t^{1-\mu_j'-i} a;p)}{\prod_{\substack{(i,j)\in \lambda \\ \lambda_j' \neq \mu_j'}}  \theta(q^{\mu_i+j}t^{-\mu_j'-i} a,q^{\lambda_i+j-1}t^{2-\lambda_j'-i} a;p)}
\end{align*}
The first identity is obtained by plugging in the definitions \eqref{eqdefcm}, respectively \eqref{eqdefcp}, of $C_{\lambda}^-$ and $C_{\lambda}^+$ as products over boxes in $\lambda$, and distributing those products over the terms $\prod_{(i,j) \in \lambda: \lambda_j'=\mu_j'}$ and 
$\prod_{(i,j) \in \lambda : \lambda_j'\neq\mu_j'}$. It should be observed that $\{(i,j) \in \lambda ~|~ \lambda_j'=\mu_j'\} = \{(i,j) \in \mu ~|~ \lambda_j'=\mu_j'\}$. The second equality then follows by using $\theta(px;p) = -\frac{1}{x} \theta(x;p)$ for all theta functions which visibly contain a $p$ in the argument. The resulting prefactors can be simplified to the term given by some combinatorial arguments. The two trickiest ones are that
\[
\sum_{(i,j) \in \mu} \lambda_i = \sum_i \sum_{1\leq j\leq \mu_i} \lambda_i = \sum_i \mu_i \lambda_i = \sum_{(i,j) \in\lambda} \mu_i
\]
and 
\[
\sum_{\substack{(i,j) \in \lambda \\ \lambda_j' \neq \mu_j'}} \mu_j' -
\sum_{\substack{(i,j) \in \mu \\ \lambda_j' \neq \mu_j'}} (\lambda_j' -1)
= \sum_{ (i,j) \in \lambda/\mu} (\lambda_j'-1) = 
\sum_{ (i,j) \in \lambda/\mu} (i-1) = 
\sum_{(i,j) \in \lambda} (i-1) - \sum_{(i,j) \in \mu} (i-1) = 
n(\lambda) - n(\mu).
\]
In the latter calculation we use that if $\lambda_j'\neq \mu_j'$ then $\lambda_j'=\mu_j'+1$ (as $\mu \prec' \lambda$), and that if 
$(i,j) \in \lambda/\mu$ then $\lambda_j'=i$ as $(i,j)$ must be the lowest box in its column (again as $\mu \prec' \lambda$).

We have now written the elliptic $\binon{\lambda}{\mu}$ as  a product where the leading coefficients of the terms are clear, so the two limits follow immediately.
%
\end{proof}

The case $\binon{\lambda}{\mu}_{[0]}$ is closely related to the coefficient in the branching rule for Macdonald polynomials, indeed
\begin{lemma}\label{lembinonpsi}
We have
\[
\binon{\lambda}{\mu}_{[0]}= \psi_{\lambda/\mu},
\]
where $\psi_{\lambda/\mu}$ is as given in \cite[page 341]{Macdonald} of Macdonald's book.
\end{lemma}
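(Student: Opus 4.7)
My plan is to reduce $\binon{\lambda}{\mu}_{[0];q,t}$ to Macdonald's arm--leg form and then match it against the formula for $\psi_{\lambda/\mu}$ on page~341 of \cite{Macdonald}.

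The first step will be to set $a=0$ in the definition of $\binon{\lambda}{\mu}_{[a];q,t}$: the second (large) product in that formula is built entirely from factors of the form $1-(\cdot)a$, so at $a=0$ it collapses to $1$, leaving just the first product.

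Next I would introduce the arm and leg lengths $a_\nu(i,j)=\nu_i-j$ and $l_\nu(i,j)=\nu'_j-i$, and Macdonald's hook ratio $b_\nu(s)=(1-q^{a_\nu(s)}t^{l_\nu(s)+1})/(1-q^{a_\nu(s)+1}t^{l_\nu(s)})$. Reading off arm and leg lengths from the four surviving factors, they pair exactly into $b_\mu(i,j)/b_\lambda(i,j)$; moreover $\lambda'_j=\mu'_j$ together with $(i,j)\in\lambda$ forces $(i,j)\in\mu$. Hence
\[
\binon{\lambda}{\mu}_{[0];q,t}=\prod_{\substack{(i,j)\in\mu \\ \lambda'_j=\mu'_j}}\frac{b_\mu(i,j)}{b_\lambda(i,j)}.
\]

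The final step is to match this to Macdonald's $\psi_{\lambda/\mu}$. Because $\mu\prec'\lambda$, the shape $\lambda/\mu$ is a horizontal strip, and the condition $\lambda'_j=\mu'_j$ selects precisely the cells of $\mu$ in columns not intersected by the strip. Macdonald's formula on p.~341 writes $\psi_{\lambda/\mu}$ as a product of the same ratios $b_\mu(s)/b_\lambda(s)$ over the cells of $R_{\lambda/\mu}-C_{\lambda/\mu}$ (touched rows, untouched columns). The discrepancy between the two index sets lies only on cells in untouched rows and untouched columns, but there $\lambda_i=\mu_i$ and $\lambda'_j=\mu'_j$, so $a_\mu=a_\lambda$ and $l_\mu=l_\lambda$; the factor $b_\mu/b_\lambda$ is trivially $1$ and can be discarded. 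What remains coincides with Macdonald's index set, proving the identity.

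The main obstacle I expect is the bookkeeping in Step~2 — matching up the four factors with $b_\mu/b_\lambda$ cleanly under the $\prec'$ (conjugate) convention of the paper, and making sure no off-by-one discrepancy creeps in through the prime. Once the pairing is verified, the rest is conceptually routine: one $a$-substitution, one arm--leg identification, and one cancellation of trivial factors over untouched rows.
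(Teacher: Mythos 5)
Your proposal is correct and follows essentially the same route as the paper's proof, just written in the opposite direction: you go from $\binon{\lambda}{\mu}_{[0]}$ (set $a=0$, recognize the surviving factors as $b_\mu/b_\lambda$ over the untouched columns, discard the trivial factors on untouched rows) down to Macdonald's index set $R_{\lambda/\mu}-C_{\lambda/\mu}$, whereas the paper starts from $\psi_{\lambda/\mu}$ and enlarges the index set by adjoining the same trivial factors before reading off the explicit four-factor formula. The pairing you worry about in Step 2 does work out cleanly, since $b_\mu(i,j)/b_\lambda(i,j)=\frac{(1-q^{\mu_i-j}t^{\mu'_j-i+1})(1-q^{\lambda_i-j+1}t^{\lambda'_j-i})}{(1-q^{\mu_i-j+1}t^{\mu'_j-i})(1-q^{\lambda_i-j}t^{\lambda'_j-i+1})}$ matches the surviving product exactly, and $\lambda'_j=\mu'_j$ does force $(i,j)\in\lambda\iff(i,j)\in\mu$.
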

\begin{proof}
Notice that in the notation of \cite{Macdonald} we have $b_\mu(s)=b_\lambda(s)$ whenever 
$s\in \lambda- R_{\lambda/\mu} -C_{\lambda/\mu}$. Thus we can rewrite $\psi_{\lambda/\mu}$ in terms of our notation as
\[
\psi_{\lambda/\mu} = \prod_{s\in R_{\lambda/\mu}-C_{\lambda/\mu}} \frac{b_{\mu}(s)}{b_{\lambda}(s)} 
= \prod_{s\in \lambda-C_{\lambda/\mu}} \frac{b_{\mu}(s)}{b_{\lambda}(s)} 
= \prod_{\substack{(i,j)\in \lambda \\ \lambda_j' = \mu_j'}}
\frac{(1-q^{\lambda_i-j+1} t^{\lambda_j'-i})(1-q^{\mu_i-j}t^{\mu_j'-i+1})}
{(1-q^{\lambda_i-j}t^{\lambda_j'-i+1})(1-q^{\mu_i-j+1}t^{\mu_j'-i})}
= \binon{\lambda}{\mu}_{[0]}. \qedhere
\]
\end{proof}

This normalization is very useful in order to prove functions defined by the proper branching rules are monic.
\begin{proposition}
Suppose $c_{monic,\lambda,\kappa}= \binon{\lambda}{\kappa}_{[0]} P_n(v;\lambda,\kappa)$, where $P_n(v)$ is a monic polynomial in $v$ of degree $\lambda_n$, then 
the coefficient of $z^\lambda= z_1^{\lambda_1} \cdots z_n^{\lambda_n}$ in the resulting interpolation functions $R^*{(n)}_{monic,\lambda}$ 
equals 1.
\end{proposition}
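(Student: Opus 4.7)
The plan is to iterate the branching rule so that
\[
R_{monic,\lambda}^{*(n)}(z_1,\ldots,z_n) \;=\; \sum_{\text{chains}} \prod_{k=1}^{n} c_{monic,\lambda^{(k)},\lambda^{(k-1)}}(z_k),
\]
where the sum runs over chains $0=\lambda^{(0)}\prec'\lambda^{(1)}\prec'\cdots\prec'\lambda^{(n)}=\lambda$ (equivalently, over semistandard Young tableaux of shape $\lambda$ with entries in $\{1,\ldots,n\}$), and then to extract the coefficient of $z_1^{\lambda_1}\cdots z_n^{\lambda_n}$ term by term. Since $\binon{\lambda^{(k)}}{\lambda^{(k-1)}}_{[0]}$ is independent of $v=z_k$ and $P_n(v;\lambda^{(k)},\lambda^{(k-1)})$ is monic of degree equal to the last nonzero part of $\lambda^{(k)}$, each factor contributes at most $z_k^{\lambda^{(k)}_k}$, with leading coefficient $\binon{\lambda^{(k)}}{\lambda^{(k-1)}}_{[0]}$.

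Next I would argue that only a single chain can contribute to the monomial $z^\lambda$. Indeed, matching degrees in $z_k$ forces $\lambda^{(k)}_k=\lambda_k$ for every $k$. Combined with the nesting $\lambda^{(k)}\subset\lambda^{(k+1)}\subset\cdots\subset\lambda^{(n)}=\lambda$ and the length bound $\ell(\lambda^{(k)})\le k$, this pins down the chain inductively: monotonicity yields $\lambda^{(i)}_i\le\lambda^{(k)}_i\le\lambda_i$ for $i\le k$, and feeding in $\lambda^{(i)}_i=\lambda_i$ gives $\lambda^{(k)}_i=\lambda_i$ for all $i\le k$, so $\lambda^{(k)}=(\lambda_1,\ldots,\lambda_k,0,\ldots)$. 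In particular $\lambda^{(k)}/\lambda^{(k-1)}$ is exactly the $k$-th row of $\lambda$, a horizontal strip as required.

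The coefficient of $z^\lambda$ in $R_{monic,\lambda}^{*(n)}$ is therefore
\[
\prod_{k=1}^{n}\binon{\lambda^{(k)}}{\lambda^{(k-1)}}_{[0];q,t},
\]
and each factor evaluates to $1$. For the distinguished chain, the product defining $\binon{\lambda^{(k)}}{\lambda^{(k-1)}}_{[0]}$ runs over cells $(i,j)\in\lambda^{(k)}$ with $(\lambda^{(k)})'_j=(\lambda^{(k-1)})'_j$, i.e.\ columns $j>\lambda_k$ not touched by the added row; on such cells we have $i\le k-1$ and $\lambda^{(k)}_i=\lambda^{(k-1)}_i=\lambda_i$ together with $(\lambda^{(k)})'_j=(\lambda^{(k-1)})'_j$, so numerator and denominator of every factor coincide and the product collapses to $1$. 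Alternatively, via Lemma \ref{lembinonpsi} this is the standard Macdonald identity $\psi_{\lambda/\mu}=1$ when $\lambda/\mu$ is a horizontal strip confined to a single bottom row.

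The main obstacle is the uniqueness-of-chain step: one must be careful that the degree bound $\lambda^{(k)}_k$ (a bound that depends on the intermediate partition) combines correctly with the nesting of the chain to force $\lambda^{(k)}=(\lambda_1,\ldots,\lambda_k)$, rather than merely restricting a few parts. Once this is in hand, the cancellation in $\binon{\lambda^{(k)}}{\lambda^{(k-1)}}_{[0]}$ is automatic and the proposition follows.
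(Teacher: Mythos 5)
Your proof is correct and follows essentially the same two-step structure as the paper's: (1) identify the unique chain $\lambda^{(k)}=(\lambda_1,\ldots,\lambda_k)$ that can contribute to $z^\lambda$, and (2) verify that $\binon{\lambda^{(k)}}{\lambda^{(k-1)}}_{[0]}=1$ along this chain because each factor's numerator equals its denominator. Where you differ is in degree of detail: the paper merely asserts the uniqueness of the chain, while you spell out the bookkeeping (the degree bound $\lambda_k \le \deg_{z_k} P_k = \lambda^{(k)}_k$ from monicity of $P_k$, the opposite inequality $\lambda^{(k)}_k \le \lambda_k$ from the nesting $\lambda^{(k)}\subset\lambda$, and the length bound $\ell(\lambda^{(k)})\le k$ coming from the base of the recursion) — note that your phrase ``matching degrees in $z_k$ forces $\lambda^{(k)}_k=\lambda_k$'' needs the nesting inequality to close, which you invoke a sentence later; the order of presentation could be tightened, but the substance is sound.
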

\begin{proof}
The proof hinges on two observations. The first is that there is exactly one chain of partitions, which leads to terms $z^{\lambda}$, so the coefficient of $z^{\lambda}$ can be explicitly written as 
\[
[z^{\lambda}] R^*{(n)}_{monic,\lambda} = [z^{\lambda}] \prod_{i} c_{monic,(\lambda_1,\ldots, \lambda_{i-1}),(\lambda_1, \ldots, \lambda_i)}.
\]
The second is that if $\lambda = \mu\cdot k$, i.e., $\lambda_i = \mu_i$ for $i<n$ and $\lambda_n=k$ (hence $k\leq \mu_{n-1}$), then 
$\binon{\lambda}{\mu}_{[0]}=1$. Indeed in this case we see that if $(i,j) \in \lambda$ with $\lambda_j'=\mu_j'$, then $i<n$, so 
$\lambda_i=\mu_i$, Thus it follows that all terms in the defining product of $\binon{\lambda}{\mu}_{[0]}$ equal 1.
\end{proof}
In particular this proposition allows us have a well-defined normalization for all polynomial interpolation functions given below: we normalize them so that they are monic. 

Now we are ready to explicitly write down all limits. As mentioned in Section \ref{secliminterpolation} the limits under consideration are associated to the faces of different dimension of the tessellation with tiles as in Figure \ref{fig1}. Below we list all those faces (up to symmetry) and give the limiting interpolation functions.

\subsection{The vertex}
All the vertices give essentially the same limit by the shifting formulas. For simplicity we use the vertex $(0,0,0)$.

\begin{definition}
We define 
\[
c_{V,\lambda,\kappa} = \binon{\lambda}{\kappa}_{[t^n\frac{a}{b}];q,t} 
\frac{\tilde C_{\lambda}^0(t^n av^{\pm 1}) \tilde C_{\kappa}^0(\frac{q}{bt} v^{\pm 1})}
{\tilde C_{\kappa}^0(t^n av^{\pm 1} ) \tilde C_{\lambda}^0(\frac{q}{b}v^{\pm 1}) } t^{|\kappa|}.
\]
\end{definition}
The definition of $c_{V,\lambda,\kappa}$ automatically provides us with 
the interpolation functions $R_{V,\lambda}^{*(n)}$ and we get the following proposition.
\begin{proposition}
We obtain the limit%
\begin{align*}
val(R_{\lambda}^{*(n)}(z_i ;a,b;q,t;p)) & =0, \\
lc(R_{\lambda}^{*(n)}(z_i ;a,b;q,t;p)) &= R_{V,\lambda}^{*(n)}(z_i;a,b;q,t) \frac{\tilde C_{\lambda}^0(\frac{aq}{bt})\tilde C_{\lambda}^-(t) \tilde C_{\lambda}^+(t^{n-1}\frac{a}{b})}{\tilde C_{\lambda}^0(t^n)}
q^{2n(\lambda')} t^{-3n(\lambda)} \left(\frac{q^2 t^{n-1}}{b^2} \right)^{|\lambda|}
\end{align*}
\end{proposition}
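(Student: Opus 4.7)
The plan is to prove this by induction on $n$, using the branching rule \eqref{eqbranch} to reduce the claim for $R_\lambda^{*(n+1)}$ to the statement for $R_\kappa^{*(n)}$ with $\kappa\prec'\lambda$, together with an identity involving the leading coefficients of the branching coefficients. The base case $n=0$ is trivial, since both sides equal $1$ for $\lambda=0$ and $0$ otherwise ($n(0)=0$, $|0|=0$, and all $\tilde C$-symbols are empty products). For the inductive step I first need to check that no unwanted cancellation occurs when interchanging the leading-coefficient operation with the finite sum $\sum_\kappa c_{\lambda,\kappa}R^{*(n)}_\kappa$. Since the vertex $(0,0,0)$ is not in the interior of the octahedron, Proposition \ref{propliminterpolation}(4) guarantees that the family $\{R^{*(n)}_{V,\kappa}\}$ is linearly independent as functions of the $z_i$, so the induction hypothesis forces all the leading terms to survive and pins down $val(R^{*(n+1)}_\lambda)=0$.

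The main computation is then to compute $lc(c_{\lambda,\kappa}(a,b,v;q,t;p))$ at the vertex. I will use Lemma \ref{lemmalimbinon} (with the elliptic parameter $a$ replaced by $t^n a/b$) to obtain the limit of $\binon{\lambda}{\kappa}_{[t^n a/b,t];q,t;p}$, and I will expand
\[
\frac{\Delta^0_\lambda(t^n a/b\,|\,t^nav,t^na/v,pqa/bt)}{\Delta^0_\kappa(t^{n-1}a/b\,|\,t^nav,t^na/v,pqa/bt)}
\]
as a ratio of $C^0$-symbols. The $C^0$-factors whose argument is holomorphic at $p=0$ (i.e.\ $C^0_{\lambda}(t^nav^{\pm 1})$, $C^0_\lambda(t^{n+1})$, $C^0_\kappa(t^n av^{\pm 1})$, $C^0_\kappa(t^n)$) limit directly to $\tilde C^0$-symbols by \eqref{eqlimc}, whereas those that visibly contain a factor of $p$ (i.e.\ $C^0_\lambda(pq/bv^{\pm 1})$, $C^0_\lambda(pqa/bt)$, $C^0_\kappa(pq/(btv^{\pm 1}))$, $C^0_\kappa(pqa/bt)$) require an application of the shift formula \eqref{eqc0p}, which produces both a $\tilde C^0$-factor and an explicit monomial in $a$, $b$, $q$, $t$, $v$, with exponents involving $|\lambda|$, $|\kappa|$, $n(\lambda')$, $n(\kappa')$, $n(\lambda)$, $n(\kappa)$.

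Once the dust settles, the $v$-dependent factors reassemble exactly into
\[
\frac{\tilde C^0_\lambda(t^nav^{\pm 1})\,\tilde C^0_\kappa(q v^{\pm 1}/(bt))}
{\tilde C^0_\kappa(t^nav^{\pm 1})\,\tilde C^0_\lambda(q v^{\pm 1}/b)}\,t^{|\kappa|}\,\binon{\lambda}{\kappa}_{[t^na/b];q,t},
\]
which is $c_{V,\lambda,\kappa}$, times a prefactor that is independent of $v$. Denoting the proposed normalization on the right-hand side of the statement by $A^{(n)}_\lambda$, the inductive step is equivalent to the identity
\[
\frac{lc(c_{\lambda,\kappa})}{c_{V,\lambda,\kappa}}=\frac{A^{(n+1)}_\lambda}{A^{(n)}_\kappa}.
\]
I expect the main obstacle to be verifying this last identity: it is a purely combinatorial check that the $v$-independent remainder produced by the limit computation telescopes across one branching step into the claimed $\lambda$-dependent prefactor. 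Concretely, this requires matching three types of contributions, namely the ratio $\tilde C^-_\lambda(t)\tilde C^+_\lambda(t^na/b)/(\tilde C^-_\kappa(t)\tilde C^+_\kappa(t^{n-1}a/b))$ from the limit of the elliptic binomial, the ratio $\tilde C^0_\lambda(qa/bt)\,\tilde C^0_\kappa(t^n)/(\tilde C^0_\kappa(qa/bt)\,\tilde C^0_\lambda(t^{n+1}))$ from the $v$-free $C^0$-factors, and the accumulated monomial weight in $a,b,q,t$ from repeated use of \eqref{eqc0p}, against the corresponding ratio $A^{(n+1)}_\lambda/A^{(n)}_\kappa$. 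The verification is routine but tedious and reduces to elementary bookkeeping of $|\lambda|$, $n(\lambda)$, $n(\lambda')$ (using e.g.\ $\tilde C^+_\lambda(t^n a/b)/\tilde C^+_\lambda(t^{n-1}a/b)$-type telescoping factors), together with the identities relating these statistics across adjacent partitions in the chain $\kappa\prec'\lambda$.
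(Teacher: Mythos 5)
Your overall strategy — induction on $n$ via the branching rule, together with a computation of $lc(c_{\lambda,\kappa})$ at the vertex using Lemma \ref{lemmalimbinon} and the $C^0$ shift/limit formulas \eqref{eqc0p}, \eqref{eqlimc} — is exactly the paper's approach, and your identification of cancellation as the one point requiring care is also the right instinct. However, the way you close the cancellation gap is circular. You appeal to Proposition \ref{propliminterpolation}(4) for the linear independence of the family $\{R^{*(n)}_{V,\kappa}\}$, but that proposition is itself a summary statement whose proof for the vertex stratum rests on the very result you are proving (via the identification $R_{\kappa,(0,0,0)}^{*(n)} = R_{V,\kappa}^{*(n)}\cdot A^{(n)}_\kappa$) together with Corollary \ref{cornonzero}. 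Invoking it here would make the argument rely on its own conclusion.

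The paper instead flags the issue explicitly — noting that if $R_{V,\lambda}^{*(n)}$ were identically zero then the claimed valuation and leading coefficient would be wrong — and resolves it with a forward reference to Corollary \ref{cornonzero}. That corollary has an \emph{independent} proof: Proposition \ref{propmcdpoly} identifies $R_{T,\lambda}^{*(n)}$ with Macdonald polynomials by a direct comparison of branching rules (no limit of elliptic objects is involved), and Corollary \ref{cornonzero} then deduces nonvanishing and linear independence of all the other families $R_{\epsilon,\lambda}^{*(n)}$ by degenerating each of them to the Macdonald family. To repair your argument, you should replace the appeal to Proposition \ref{propliminterpolation}(4) by an appeal to Corollary \ref{cornonzero} (or better, structure the proof as the paper does: state the conclusion conditionally on $R_{V,\lambda}^{*(n)}\not\equiv 0$, and discharge that condition at the end once the Macdonald identification is in place). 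The remainder of your outline — the computation of $lc(c_{\lambda,\kappa})$ and the telescoping of the $v$-independent prefactor $A_\lambda^{(n+1)}/A_\kappa^{(n)}$ — is on track and matches what the paper leaves to the reader.
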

\begin{proof}
The valuation $val(c_{\lambda,\kappa}(a,b;v))=0$ and the value of the leading coefficient $lc(c_{\lambda,\kappa}(a,b;v))$ 
 follow directly from the expressions for the binomial coefficient (Lemma \ref{lemmalimbinon}) and of the $\Delta_{\lambda}^0$'s (which follow from \eqref{eqlimc}).
The limit for the interpolation function then follows from the branching rule using induction to $n$. 

It could be that the newly defined $R_{V,\lambda}^{*(n)}$ is identically zero, in which case the valuation of the interpolation functions would be more than 0, the value given here, and the expression for the leading coefficient would be completely wrong. Fortunately we have Corollary \ref{cornonzero} below, which shows this is not the case.
\end{proof}
The proofs for the other limits of the interpolation functions are essentially the same as this one, so we will omit them from now on.
To get an idea of the kind of coefficients we have here, it should be observed that 
\[
\frac{\tilde C_{\lambda}^0(xv)}{\tilde C_{\kappa}^0(xv)} = 
\prod_{(i,j) \in \lambda/\kappa} (1-q^{j-1}t^{1-i}xv),
\] 
so it is a polynomial in $v$ of degree $|\lambda|-|\kappa|$, with leading coefficient 
\[
[v^{|\lambda|-|\kappa|}]\frac{\tilde C_{\lambda}^0(xv)}{\tilde C_{\kappa}^0(xv)}  = 
 \prod_{(i,j) \in \lambda/\kappa}
-xq^{j-1}t^{1-i}=(-x)^{|\lambda|-|\kappa|} q^{n(\lambda')-n(\kappa')} t^{n(\kappa)-n(\lambda)}.
\]
So in particular $\tilde C_{\lambda}^0(t^n av^{\pm 1})/\tilde C_{\kappa}^0(t^na v^{\pm 1})$ is a polynomial in 
$v+v^{-1}$ of degree $|\lambda|-|\kappa|$. For the terms involving $b$ we notice
\[
\frac{\tilde C_{\kappa}^0(\frac{q}{bt}v^{\pm 1})}{\tilde C_{\lambda}^0(\frac{q}{b} v^{\pm 1})} = 
\frac{1}{(\frac{q}{b} v^{\pm 1} ;q)_{\lambda_1}} \prod_{(i,j) \in \kappa/ (\lambda_2,\ldots, \lambda_n)} (1-q^{j-1}t^{1-i}\frac{qv}{bt})(1-q^{j-1}t^{1-i}\frac{q}{btv} ),
\]
thus we get a rational function with a polynomial in $v+v^{-1}$ of degree $\lambda_1+|\kappa|-|\lambda|$ in the numerator and the given polynomial 
$(\frac{q}{b} v^{\pm 1} ;q)_{\lambda_1}$ of degree $\lambda_1$ in the denominator.


\subsection{Edges}
For the edges we use the names $E1$, $E2$, $E3$ and $E4$, which correspond to 
\begin{itemize}
\item[E1] The edge connecting $(0,0,0)$ and $(\frac12,\frac12,\frac12)$;
\item[E2] The edge connecting $(0,0,0)$ and $(\frac12, -\frac12,\frac12)$;
\item[E3] The edge connecting $(0,0,0)$ and $(0,1,0)$;
\item[E4] The edge connecting $(0,0,0)$ and $(1,0,0)$.
\end{itemize}
The edges $E3$ and $E4$ are symmetric under the change $\zeta\to -\zeta$, while 
$E1$ and $E2$ change into the final two edges (mod lattice translations).

\begin{definition}
We define the coefficients
\begin{align*}
c_{E1,\lambda,\kappa} &= \binon{\lambda}{\kappa}_{[t^n \frac{a}{b}]} 
\frac{\tilde C_{\lambda}^0(t^n \frac{a}{v}) \tilde C_{\kappa}^0(\frac{qv}{bt})}{\tilde C_{\kappa}^0(t^n \frac{a}{v}) \tilde C_{\lambda}^0(\frac{qv}{b})}
\left(\frac{v}{b}\right)^{|\lambda|-|\kappa|} \\
c_{E2,\lambda,\kappa} &= \binon{\lambda}{\kappa}_{[0]} \frac{\tilde C_{\lambda}^0(t^n\frac{a}{v})\tilde C_{\kappa}^0(\frac{q}{btv})}
{\tilde C_{\kappa}^0(t^n \frac{a}{v}) \tilde C_{\lambda}^0(\frac{q}{bv})} t^{|\kappa|} \\
c_{E3,\lambda,\kappa} &= \binon{\lambda}{\kappa}_{[0]} \frac{\tilde C_{\lambda}^0(t^n a v^{\pm 1})}{\tilde C_{\kappa}^0(t^n av^{\pm 1})} 
\frac{q^{-n(\lambda')} t^{n(\lambda)} (-at^n)^{-|\lambda|}}{q^{-n(\kappa')} t^{n(\kappa)} (-at^n)^{-|\kappa|}} \\
c_{E4,\lambda,\kappa} &= \binon{\lambda}{\kappa}_{[0]} \frac{\tilde C_{\kappa}^0(\frac{q}{bt} v^{\pm 1})}{\tilde C_{\lambda}^0(\frac{q}{b} v^{\pm 1})} t^{|\kappa|}
\end{align*}
\end{definition}

\begin{proposition}
We have for $0<\alpha<1$ for the valuations
\begin{align*}
val(R_{\lambda}^{*(n)}(z_ip^{\alpha/2};ap^{\alpha/2},bp^{\alpha/2};q,t;p)&=0, & val(R_{\lambda}^{*(n)}(z_ip^{\alpha/2};ap^{\alpha/2},bp^{-\alpha/2};q,t;p)&=\alpha |\lambda|, \\ 
val(R_{\lambda}^{*(n)}(z_ip^{1/2};ap^{1/2},bp^{1/2-\alpha};q,t;p)&=\alpha |\lambda|, &
val(R_{\lambda}^{*(n)}(z_i;ap^{\alpha},b;q,t;p)&=0, 
\end{align*}
and for the leading coefficients
\begin{align*}
lc(R_{\lambda}^{*(n)}(z_ip^{\alpha/2};ap^{\alpha/2},bp^{\alpha/2};q,t;p)&=R_{E1,\lambda}^{*(n)}(z_i;a,b;q,t) \frac{\tilde C_{\lambda}^-(t) \tilde C_{\lambda}^+(t^{n-1} \frac{a}{b})\tilde C_{\lambda}^0(\frac{aq}{bt})}{\tilde C_{\lambda}^0( t^n)}
\left( -qt^{n-1}\right)^{|\lambda|} q^{n(\lambda')} t^{-2n(\lambda)},  \\
lc(R_{\lambda}^{*(n)}(z_ip^{\alpha/2};ap^{\alpha/2},bp^{-\alpha/2};q,t;p)&=R_{E2,\lambda}^{*(n)}(z_i;a,b;q,t) 
\frac{\tilde C_{\lambda}^-(t)}{\tilde C_{\lambda}^0(t^n)} 
\left( \frac{t^{n-1}q^2}{b^2}\right)^{|\lambda|} q^{2n(\lambda')} t^{-3n(\lambda)}, \\
lc(R_{\lambda}^{*(n)}(z_i;a,bp^{\alpha};q,t;p)) &= R_{E3,\lambda}^{*(n)}(z_i;a) \frac{\tilde C_{\lambda}^-(t)}{\tilde C_{\lambda}^0(t^n)} q^{n(\lambda')} t^{-2n(\lambda)} (-at^{n-1})^{|\lambda|}, \\
lc(R_{\lambda}^{*(n)}(z_i;ap^{\alpha},b;q,t;p)&=R_{E4,\lambda}^{*(n)}(z_i;b;q,t) 
\frac{\tilde C_{\lambda}^-(t)}{\tilde C_{\lambda}^0(t^n)} q^{2n(\lambda')} t^{-3n(\lambda)} \left( \frac{q^2 t^{n-1}}{b^2}\right)^{|\lambda|}.
\end{align*}

\end{proposition}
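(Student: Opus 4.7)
The plan is to follow the template of the proof of the vertex case $V$ above. For each edge $E_i$, I carry out three steps: (i) compute $val(c_{\lambda,\kappa}(vp^{\zeta};ap^{\alpha_a},bp^{\alpha_b};q,t;p))$ under the prescribed scaling and verify it takes the form $x_i(|\lambda|-|\kappa|)$ for a constant $x_i$ matching the announced per-box growth rate of $R^{*(n)}_\lambda$; (ii) compute the leading coefficient $lc(c_{\lambda,\kappa})$ and identify it, up to a $\lambda,\kappa$-dependent monomial prefactor in $a,b,q,t$, with $c_{E_i,\lambda,\kappa}$; and (iii) apply the branching rule \eqref{eqbranch} and invoke Corollary \ref{corsumequalval} to conclude. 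Recall that
\[
c_{\lambda,\kappa} = \binon{\lambda}{\kappa}_{[t^n a/b,t];q,t;p} \cdot \frac{\Delta_\lambda^0(t^n a/b \mid t^n av, t^n a/v, pqa/(bt))}{\Delta_\kappa^0(t^{n-1} a/b \mid t^n av, t^n a/v, pqa/(bt))},
\]
so expanding each $\Delta^0$ into a ratio of $C^0$'s and applying \eqref{eqlimc} and Lemma \ref{lemmalimbinon} to each factor yields the $p$-valuation. Shifts $\gamma\in[0,1]$ contribute $0$ to the $C^0$-valuation, while shifts of the form $\gamma=1+\alpha$ with $\alpha\in(0,1)$ (which arise in the $E2$ and $E3$ cases) contribute $-\alpha|\lambda|$ or $-\alpha|\kappa|$. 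A short accounting yields $x_{E1}=x_{E4}=0$ and $x_{E2}=x_{E3}=\alpha$, matching the claimed per-box growth rates.

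For step (ii), I would use the shift formulas \eqref{eqc0p} to reduce every $C^0(xp^\gamma)$ with integer $\gamma\ne 0$ to its $\gamma=0$ counterpart $\tilde C^0(x)$, and every non-integer shift $\gamma>1$ to the $\gamma\in(0,1)$ case (where $lc=1$), extracting a monomial in $a,b,q,t,q^{n(\mu')},t^{n(\mu)}$ at each step. The seven $C^0$ factors then combine with the binomial leading coefficient from Lemma \ref{lemmalimbinon} --- which produces either $\binon{\lambda}{\kappa}_{[t^n a/b]}$ together with $\tilde C^+$-factors (when the shift of $t^n a/b$ is $0$, as in $E1$) or $\binon{\lambda}{\kappa}_{[0]}$ without them (as in $E2$, $E3$, $E4$) --- to give the defined $c_{E_i,\lambda,\kappa}$ times a monomial prefactor. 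The key identity to verify is that this prefactor equals the ratio $P(\lambda,n+1)/P(\kappa,n)$, where $P(\mu,m)$ denotes the prefactor for $lc(R^{*(m)}_\mu)$ announced in the proposition.

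The main obstacle is this algebraic matching. Its difficulty is purely combinatorial: one must track powers of $a,b,q,t$ and of $q^{n(\mu')},t^{n(\mu)}$ arising from four sources --- the binomial leading coefficient, the iterative shifting of $C^0$ arguments by integer powers of $p$, the $t^n\mapsto t^{n+1}$ jump inside $P(\lambda,n+1)$ versus $P(\kappa,n)$, and the bare monomial factors such as $t^{|\kappa|}$ or $(v/b)^{|\lambda|-|\kappa|}$ already present in $c_{E_i,\lambda,\kappa}$ --- and confirm they combine correctly. No conceptual surprise should arise, since $P(\mu,m)$ was engineered so the induction closes: its $\tilde C^-_\mu(t)$ factor comes from Lemma \ref{lemmalimbinon}, the $\tilde C^+_\mu$ present only in the $E1$ case comes from the extra $C^+$-term in the $\alpha=0$ branch of that lemma, the $\tilde C^0_\mu(t^m)$ denominator comes from the quotient $C^0_\lambda(t^{n+1})/C^0_\kappa(t^n)$ in the two $\Delta^0$'s, and the remaining monomials come from iterated use of \eqref{eqc0p}.

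Finally, since $val(c_{\lambda,\kappa}R^{*(n)}_\kappa)=x_i|\lambda|$ is the same for every $\kappa\prec'\lambda$, taking $lc$ of the branching rule \eqref{eqbranch} via Corollary \ref{corsumequalval} produces $\sum_{\kappa}c_{E_i,\lambda,\kappa}R^{*(n)}_{E_i,\kappa}$ (up to the common prefactor) as the candidate leading coefficient, provided no cancellation among these summands occurs. Non-cancellation in turn follows from Corollary \ref{cornonzero}, which ensures $R^{*(n+1)}_{E_i,\lambda}$ is not identically zero for generic parameters, exactly as in the vertex case proof.
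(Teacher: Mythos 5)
Your proposal matches the paper's approach exactly: the paper gives the full argument only for the vertex case $V$, then explicitly states ``the proofs for the other limits of the interpolation functions are essentially the same as this one, so we will omit them from now on.'' Your outline (compute $val(c_{\lambda,\kappa})$ via the $C^0$-valuations from \eqref{eqlimc} and Lemma~\ref{lembinonpsi}, reduce $lc(c_{\lambda,\kappa})$ to $c_{E_i,\lambda,\kappa}$ times the ratio $P(\lambda,n+1)/P(\kappa,n)$ using the shift formulas, induct on $n$ via the branching rule, and invoke Corollary~\ref{cornonzero} to rule out cancellation) is precisely the template the paper uses for $V$ and intends for the edges, so this is the same route carried out in somewhat more detail.

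One small point worth flagging but not fatal to the argument: your phrasing attributes the nonzero valuation in the $E2$, $E3$ cases entirely to $C^0$-arguments with shift $1+\alpha$, but some of the representatives also pick up a nonzero contribution from the $\binon{\lambda}{\kappa}$ factor when the shift of $t^n a/b$ is noninteger (for instance, at the scaling $(\alpha_a,\alpha_b,\zeta)=(\tfrac12,\tfrac12-\alpha,\tfrac12)$ the binomial shift is $\alpha$ and contributes $\alpha(|\lambda|-|\kappa|)$, while the two $C^0$ shifts of $1+\alpha$ inside $\Delta^0_\lambda$ cancel against each other). The ledger still balances to $x_{E2}=x_{E3}=\alpha$, but the bookkeeping in your step~(i) should include the binomial's contribution alongside the $C^0$ shifts when the scaling is chosen so that $t^n a/b$ has noninteger $p$-exponent.
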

Note $R_{E4,\lambda}^{*(n)}$ does not depend on $a$, so we left it out of the notation. Similarly
$R_{E3,\lambda}^{*(n)}$ is independent of $b$. For $R_{E1, \lambda}^{*(n)}$ and $R_{E2,\lambda}^{*(n)}$ it is slightly more subtle. 
These satisfy an invariance of the following form: for arbitrary $x\in \mathbb{C}^*$ we have
\begin{equation}\label{eqE1inv}
R_{E1,\lambda}^{*(n)} (z_i x; ax,bx;q,t) = R_{E1,\lambda}^{*(n)} (z_i ; a,b;q,t), \qquad 
R_{E2,\lambda}^{*(n)} (z_i x; ax,b/x;q,t) = R_{E2,\lambda}^{*(n)} (z_i ; a,b;q,t).
\end{equation}
Thus, in essence, they too have one less variable than $R_{V,\lambda}^{*(n)}$. 

We would also like to remark that we normalized $R_{E3,\lambda}^{*(n)}$ to be a monic polynomial in $z_i+z_i^{-1}$. The interpolation function $R_{E3,\lambda}^{*(n)}$ is moreover equal to Okounkov's \cite{Okounkov} $BC_n$-symmetric interpolation function, via $R_{E3,\lambda}^{*(n)}(z_i;a)= (at^{n-1})^{|\lambda|}P_{\lambda}^*(z_i/at^i;q,t,a)$, as can be easily seen by comparing the branching rule given here with \cite[(5.2)]{Okounkov}.

\subsection{Triangles}
The different triangles are called $F1$, $F2$, $F3$, and $F4$ (for face).
\begin{itemize}
\item[F1] The triangle with vertices $(\frac12,-\frac12, \frac12)$, $(0,0,0)$ and $(\frac12,\frac12,\frac12)$;
\item[F2] The triangle with vertices $(0,0,0)$, $(0,1,0)$ and $(\frac12,\frac12,-\frac12)$;
\item[F3] The triangle with vertices $(\frac12,-\frac12, \frac12)$, $(0,0,0)$ and $(1,0,0)$ ;
\item[F4] The triangle with vertices $(0,0,0)$, $(1,0,0)$ and $(\frac12,\frac12,-\frac12)$;
\end{itemize}
Note we get all eight different triangles (up to lattice shifts) when we reflect these four by $\zeta\to -\zeta$.
\begin{definition}
We define the coefficients
\begin{align*}
c_{F1,\lambda,\kappa}&= \binon{\lambda}{\kappa}_{[0]} \frac{\tilde C_{\lambda}^0(t^n\frac{a}{v})}{\tilde C_{\kappa}^0(t^n \frac{a}{v})} v^{|\lambda|-|\kappa|}\\
c_{F2,\lambda,\kappa}&= \binon{\lambda}{\kappa}_{[0]} \frac{\tilde C_{\lambda}^0(t^n av)}{\tilde C_{\kappa}^0(t^n av)} \frac{(-t^na)^{-|\lambda|} q^{-n(\lambda')} t^{n(\lambda)}}
{(-t^n a)^{-|\kappa|} q^{-n(\kappa')} t^{n(\kappa)}} \\
c_{F3,\lambda,\kappa}&= \binon{\lambda}{\kappa}_{[0]} \frac{\tilde C_{\kappa}^0(\frac{q}{btv})}{\tilde C_{\lambda}^0(\frac{q}{bv})} t^{|\kappa|}, \\
c_{F4,\lambda,\kappa}&= \binon{\lambda}{\kappa}_{[0]} \frac{\tilde C_{\kappa}^0(\frac{q}{btv})}{\tilde C_{\lambda}^0(\frac{q}{bv})} v^{|\kappa|-|\lambda|}
\end{align*}
\end{definition}

\begin{proposition}
We have for $0<\alpha$ and $0<\beta$ and $\alpha+\beta<1$
\begin{align*}
val(R_{\lambda}^{*(n)}(z_ip^{\alpha/2+\beta/2};ap^{\alpha/2+\beta/2},bp^{\alpha/2-\beta/2};q,t;p)) &= \beta |\lambda| &
val(R_{\lambda}^{*(n)}(z_ip^{-\alpha/2};ap^{\alpha/2},bp^{\alpha/2+\beta};q,t;p)) &= 0 \\
val(R_{\lambda}^{*(n)}(z_ip^{\beta/2};ap^{\alpha+\beta/2},bp^{-\beta/2};q,t;p)) &= \beta |\lambda|, &val(R_{\lambda}^{*(n)}(z_ip^{-\beta/2};ap^{\alpha+\beta/2},bp^{\beta/2};q,t;p))
&=0
\end{align*}
\begin{align*}
lc(R_{\lambda}^{*(n)}(z_ip^{\alpha/2+\beta/2};ap^{\alpha/2+\beta/2},bp^{\alpha/2-\beta/2};q,t;p)) &= 
R_{F1,\lambda}^{*(n)}(z_i;a;q,t) \frac{\tilde C_{\lambda}^-(t)}{\tilde C_{\lambda}^0(t^n)} 
q^{n(\lambda')} t^{-2n(\lambda)} \left( - \frac{t^{n-1}q}{b} \right)^{|\lambda|}, \\
lc(R_{\lambda}^{*(n)}(z_ip^{-\alpha/2};ap^{\alpha/2},bp^{\alpha/2+\beta};q,t;p)) &= 
R_{F2,\lambda}^{*(n)}(z_i;a;q,t) \frac{\tilde C_{\lambda}^-(t)}{\tilde C_\lambda^0(t^n)} q^{n(\lambda')} t^{-2n(\lambda)} (-t^{n-1}a)^{|\lambda|}, \\
lc(R_{\lambda}^{*(n)}(z_ip^{\beta/2};ap^{\alpha+\beta/2},bp^{-\beta/2};q,t;p)) &=
R_{F3,\lambda}^{*(n)}(z_i;b;q,t) \frac{\tilde C_{\lambda}^-(t)}{\tilde C_{\lambda}^0(t^n)} q^{2n(\lambda')} t^{-3n(\lambda)}
\left( \frac{t^{n-1}q^2}{b^2}\right)^{|\lambda|}, \\
lc(R_{\lambda}^{*(n)}(z_ip^{-\beta/2};ap^{\alpha+\beta/2},bp^{\beta/2};q,t;p))
&= R_{F4,\lambda}^{*(n)}(z_i;b;q,t) \frac{\tilde C_{\lambda}^-(t)}{\tilde C_{\lambda}^0(t^n)} \left( -\frac{qt^{n-1}}{b}\right)^{|\lambda|}
q^{n(\lambda')} t^{-2n(\lambda)}
\end{align*}

\end{proposition}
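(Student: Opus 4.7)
The plan is to follow the same inductive template used for the vertex limit $R_{V,\lambda}^{*(n)}$ in the preceding proposition. The four cases F1--F4 are all handled uniformly: one writes down the scaling of the parameters, rewrites $c_{\lambda,\kappa}$ from \eqref{eqdefclk} via Lemma \ref{lemmalimbinon} and the $C$-symbol formulas \eqref{eqlimc}, extracts a factor of the form (constant in $a,b,q,t$)${}\times c_{Fi,\lambda,\kappa}$, and then iterates the branching rule \eqref{eqbranch}. The key identity is that $c_{\lambda,\kappa}(vp^{\zeta};ap^{\alpha},bp^{\beta})$ has valuation of the form $y\,(|\lambda|-|\kappa|)$ for some piecewise linear $y=y(\alpha,\beta,\zeta)$, so the valuation of $R_{\lambda}^{*(n)}$ telescopes along the chain $0\prec'\lambda^{(1)}\prec'\cdots\prec'\lambda^{(n)}=\lambda$ to the claimed value $y|\lambda|$.

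Concretely, for case F1 I would first set $\alpha_a=\alpha_b=(\alpha+\beta)/2$, $\alpha_b=(\alpha-\beta)/2$, $\zeta=(\alpha+\beta)/2$ and substitute into \eqref{eqdefclk}. The arguments of the $C_{\lambda}^0$'s appearing in $\Delta_\lambda^0/\Delta_\kappa^0$ then scale as $t^n av\sim p^0$, $t^na/v\sim p^0$, and $pqa/bt\sim p^{\beta}$; the binomial coefficient $\binon{\lambda}{\kappa}_{[t^n a/b,t];q,t;p}$ has its argument scaling like $p^{\beta}$. Applying Lemma \ref{lemmalimbinon} (the $0<\alpha<1$ branch, which yields the $\binon{\lambda}{\kappa}_{[0]}$ form) and \eqref{eqlimc} gives the leading coefficient of $c_{\lambda,\kappa}$ as $c_{F1,\lambda,\kappa}$ times an explicit monomial in $a$, $b$, $q$, $t$; the valuation is $\beta(|\lambda|-|\kappa|)$. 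Cases F2, F3, F4 are analogous: the only bookkeeping is sorting which $C^0$ arguments land in $\mathbb{Z}$ versus $(0,1)$ and tracking the monomial prefactors that arise from \eqref{eqc0p}--\eqref{eqcpp} and Lemma \ref{lemmalimbinon}. Once the leading coefficient of $c_{\lambda,\kappa}$ is in hand, an easy induction on $n$ using the branching rule shows that $R_{\lambda}^{*(n)}$ has valuation $\beta|\lambda|$ (resp.\ $0$, etc.) and leading coefficient equal to $R_{Fi,\lambda}^{*(n)}$ times the stated prefactor, collecting the monomial factors that accumulate.

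The one delicate point is that the valuation statement is only correct if no accidental cancellation lowers it, i.e.\ one must verify $R_{Fi,\lambda}^{*(n)}\not\equiv 0$. As flagged in the vertex proof, this is precisely what the (already established) Corollary \ref{cornonzero} guarantees: the leading coefficients are generically nonzero, which feeds back to confirm the valuation and in turn makes the inductive step legitimate. Thus the main obstacle is not the calculation itself but ensuring that at each step of the induction the leading-coefficient recursion does not degenerate; this is handled uniformly by invoking the non-vanishing corollary.

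Finally, the invariance \eqref{eqE1inv}-type remarks for F1--F4 (e.g.\ that $R_{F1,\lambda}^{*(n)}$ and $R_{F2,\lambda}^{*(n)}$ really depend only on the $BC_n$-appropriate combinations of $a$, $b$, $v$, $z_i$, while $R_{F3,\lambda}^{*(n)}$ is independent of $a$ and $R_{F4,\lambda}^{*(n)}$ independent of $b$) are immediate by inspection of $c_{Fi,\lambda,\kappa}$: e.g.\ $c_{F3,\lambda,\kappa}$ contains no $a$, and the remaining $a$-dependent monomial prefactor is absorbed into the overall normalization. These observations justify omitting the corresponding variables from the notation.
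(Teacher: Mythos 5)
Your overall approach matches the paper's exactly: the paper simply states, after proving the vertex case, that ``the proofs for the other limits of the interpolation functions are essentially the same,'' and your proposal correctly captures that template — compute $val$ and $lc$ of $c_{\lambda,\kappa}$ via Lemma~\ref{lemmalimbinon} and \eqref{eqlimc}, iterate the branching rule so the valuation telescopes, and invoke Corollary~\ref{cornonzero} to rule out the cancellation that would spoil the valuation claim. You also correctly note the role of the shift invariances \eqref{eqE1inv} in justifying dropping variables from the notation.

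That said, the concrete bookkeeping you sketch for F1 contains scaling errors that would matter if you carried the computation through. With $a\mapsto ap^{(\alpha+\beta)/2}$, $b\mapsto bp^{(\alpha-\beta)/2}$, and $v\mapsto vp^{(\alpha+\beta)/2}$ (same as the other $z_i$), one gets $t^n av\sim p^{\alpha+\beta}$ (not $p^0$) and $pqa/(bt)\sim p^{1+\beta}$ (not $p^\beta$); only $t^n a/v\sim p^0$ and $t^n a/b\sim p^\beta$ are as you claim. The second discrepancy is harmless modulo a monomial prefactor (integer shift of the exponent), but the first one is structural: if $t^n av$ really scaled like $p^0$ the leading coefficient of the corresponding $C_\lambda^0$ would be $\tilde C_\lambda^0(t^n av)$, and $c_{F1,\lambda,\kappa}$ would acquire an extra factor $\tilde C_\lambda^0(t^n av)/\tilde C_\kappa^0(t^n av)$ that is not present in the paper's definition. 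With the correct exponent $\alpha+\beta\in(0,1)$, that leading coefficient is $1$ and the factor correctly disappears, matching $c_{F1,\lambda,\kappa}=\binon{\lambda}{\kappa}_{[0]}\,\tilde C_\lambda^0(t^n a/v)/\tilde C_\kappa^0(t^n a/v)\,v^{|\lambda|-|\kappa|}$. So the framework is right, but you should redo the arithmetic of the exponents before trusting the monomial prefactors.
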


Like for $R_{E1}$ and $R_{E2}$, these functions all satisfy a shifting equation, so they have one less variable than apparent. These equations are
\begin{align*}
R_{F1,\lambda}^{*(n)}(xz_i;ax) &= x^{|\lambda|}R_{F1,\lambda}^{*(n)}(z_i;a), & 
R_{F2,\lambda}^{*(n)}(xz_i;a/x) &= x^{|\lambda|} R_{F2,\lambda}^{*(n)}(z_i;a), \\
R_{F3,\lambda}^{*(n)}(xz_i;b/x) &= R_{F3,\lambda}^{*(n)}(z_i;b), & 
R_{F4,\lambda}^{*(n)}(xz_i;b/x) &= x^{-|\lambda|}R_{F4,\lambda}^{*(n)}(z_i;b).
\end{align*}

The interpolation function $R_{F1,\lambda}^{*(n)}$ correspond to the interpolation functions studied by Okounkov in \cite{Okounkov2}. The relation is
$R_{F1,\lambda}^{*(n)}(z_i;a) = P_{\lambda}^* (z_i/t^ia) (t^{n-1}a)^{-|\lambda|}$.

\subsection{Tetrahedra}
We denote the tetrahedron with vertices $(0,0,0)$, $(1,0,0)$, $(\frac12,\frac12,\frac12)$ and $(\frac12,-\frac12,\frac12)$
 by T. All tetrahedrons are shifts of this tetrahedron, or shifts of the reflection of this tetrahedron in the plane $\zeta=0$.

\begin{definition}
We define the coefficients
\[
c_{T,\lambda,\kappa} 
= \binon{\lambda}{\kappa}_{[0]} v^{|\lambda|-|\kappa|}
\]
\end{definition}

\begin{proposition}
We have for $0<\alpha, \beta,\gamma$ and $\alpha+\beta+\gamma<1$
\begin{align*}
val(R_{\lambda}^{*(n)}(z_ip^{\beta/2+\gamma/2};ap^{\alpha+\beta/2+\gamma/2},bp^{\beta/2-\gamma/2};q,t;p)) &= \gamma |\lambda| , \\
lc(R_{\lambda}^{*(n)}(z_ip^{\beta/2+\gamma/2};ap^{\alpha+\beta/2+\gamma/2},bp^{\beta/2-\gamma/2};q,t;p)) &= 
R_{T,\lambda,\kappa}(z_i;q,t) \frac{\tilde C_{\lambda}^-(t)}{\tilde C_{\lambda}^0(t^{n})}
q^{n(\lambda')} t^{-2n(\lambda)} \left(-\frac{t^{n-1}q}{b}\right)^{|\lambda|}
\end{align*}

\end{proposition}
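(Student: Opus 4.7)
The plan is to follow the template used implicitly in the preceding limit propositions: first compute $val$ and $lc$ of the branching coefficient $c_{\lambda,\kappa}$ pointwise under the prescribed $p$-scaling, then induct on $n$ via the branching rule \eqref{eqbranch}, and finally settle a non-vanishing issue that arises in the induction.

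For the pointwise step, I would substitute $v \to v p^{\beta/2+\gamma/2}$, $a \to a p^{\alpha+\beta/2+\gamma/2}$, $b \to b p^{\beta/2-\gamma/2}$ into the definition \eqref{eqdefclk} of $c_{\lambda,\kappa}$. Under the hypothesis $\alpha,\beta,\gamma>0$ and $\alpha+\beta+\gamma<1$, every relevant $p$-exponent lies in $(-1,1)$, and the only factor whose $p$-power leaves $[0,1)$ is the one involving $pqa/(bt)$, which carries total exponent $1+\alpha+\gamma\in(1,2)$ and must first be pulled back by \eqref{eqc0p}. Applying \eqref{eqlimc} to each $C^0$-factor in $\Delta_\lambda^0$ and $\Delta_\kappa^0$, and Lemma \ref{lemmalimbinon} to the binomial $\binon{\lambda}{\kappa}_{[t^n a/b\cdot p^{\alpha+\gamma},t];q,t;p}$, should give
\[
val(c_{\lambda,\kappa})=(\alpha+\gamma)(|\lambda|-|\kappa|)-\alpha(|\lambda|-|\kappa|)=\gamma(|\lambda|-|\kappa|),
\]
and the leading coefficient, after routine bookkeeping of monomial prefactors of the form $(-1/x)^{|\lambda|}q^{-n(\lambda')}t^{n(\lambda)}$ generated by the pull-backs, should factor as $c_{T,\lambda,\kappa}=\binon{\lambda}{\kappa}_{[0];q,t}\, v^{|\lambda|-|\kappa|}$ multiplied by the ratio of the $\lambda$- and $\kappa$-versions of the prefactor $\tilde C_\lambda^-(t)/\tilde C_\lambda^0(t^n)\cdot q^{n(\lambda')}t^{-2n(\lambda)}(-t^{n-1}q/b)^{|\lambda|}$ appearing in the statement.

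The induction on $n$ is then routine: the base case is trivial, and the branching rule propagates both $val$ and $lc$ correctly to $R_{\lambda}^{*(n+1)}$, provided the sum of per-summand leading coefficients does not collapse to zero. The hard part is precisely this non-cancellation, i.e., ruling out that $R_{T,\lambda}^{*(n)}\equiv 0$ as a function of $z$. Here I would appeal to Lemma \ref{lembinonpsi}: since $\binon{\lambda}{\kappa}_{[0];q,t}=\psi_{\lambda/\kappa}$, the recursion defining $R_{T,\lambda}^{*(n)}$ with coefficients $c_{T,\lambda,\kappa}=\psi_{\lambda/\kappa}\, v^{|\lambda|-|\kappa|}$ is literally the combinatorial branching rule for Macdonald polynomials, so $R_{T,\lambda}^{*(n)}(z;q,t)=P_\lambda(z;q,t)$, which is manifestly nonzero. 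This closes the induction and simultaneously confirms the Macdonald-polynomial identification noted in the body of the paper.
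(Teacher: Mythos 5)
Your proposal is correct and follows the same route as the paper. The paper proves the analogous statement only for the vertex case $R_{V}$, computing $val$ and $lc$ of the branching coefficient $c_{\lambda,\kappa}$ via Lemma \ref{lemmalimbinon} and \eqref{eqlimc}, inducting on $n$ through the branching rule, and deferring the non-vanishing issue to Corollary \ref{cornonzero}; it then says the remaining cases, including the tetrahedron $T$, are ``essentially the same'' and omits them. You reconstruct exactly that omitted argument, and you correctly close the non-vanishing gap directly by appealing to Lemma \ref{lembinonpsi} (which gives $\binon{\lambda}{\kappa}_{[0]}=\psi_{\lambda/\kappa}$, so $R_{T,\lambda}^{*(n)}=P_\lambda$), which is logically the same content the paper packages as Proposition \ref{propmcdpoly} together with Corollary \ref{cornonzero}. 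One small imprecision: it is not only the explicit argument $pqa/(bt)$ (exponent $1+\alpha+\gamma$) that needs a $p$-pullback by \eqref{eqc0p}; the derived argument $pq\cdot(t^{n}a/b)/(t^{n}a/v)=pqv/b$ (and its $\Delta_\kappa^0$ counterpart $pqv/(bt)$) has exponent $1+\gamma\in(1,2)$ and also requires a pullback. Tracking both, one finds $val(\Delta_\lambda^0/\Delta_\kappa^0)=-\alpha(|\lambda|-|\kappa|)$, which combined with the binomial's $(\alpha+\gamma)(|\lambda|-|\kappa|)$ does give $\gamma(|\lambda|-|\kappa|)$ as you state, and the accumulated monomial prefactors assemble into $c_{T,\lambda,\kappa}=\binon{\lambda}{\kappa}_{[0]}v^{|\lambda|-|\kappa|}$ times the $\lambda$-to-$\kappa$ ratio of the stated prefactor, so your conclusion stands.
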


These functions associated to the interiors of the tetrahedra are the famous Macdonald polynomials \cite{Macdonald}.

\begin{proposition}\label{propmcdpoly}
Let $P_{\lambda}$ denote the Macdonald polynomials. Then we have 
\[
P_{\lambda}(z_i;q,t) = R_{T,\lambda}^{*(n)}(z_i;q,t).
\]
\end{proposition}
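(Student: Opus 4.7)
The plan is to argue by induction on the number of variables $n$, using the branching rule for Macdonald polynomials from \cite{Macdonald}. The base case $n=0$ is immediate: both $P_\lambda$ and $R_{T,\lambda}^{*(0)}$ equal $\delta_{\lambda,0}$.

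For the inductive step, I would recall the classical branching rule for Macdonald polynomials (\cite[VI.7.13']{Macdonald}), which reads
\[
P_{\lambda}(z_1,\ldots,z_n,v;q,t) = \sum_{\kappa \prec' \lambda} \psi_{\lambda/\kappa}(q,t)\, v^{|\lambda|-|\kappa|}\, P_{\kappa}(z_1,\ldots,z_n;q,t),
\]
where the sum is over partitions $\kappa$ such that $\lambda/\kappa$ is a horizontal strip, i.e.\ $\kappa \prec' \lambda$ in our notation. By Lemma \ref{lembinonpsi} we have $\binon{\lambda}{\kappa}_{[0]} = \psi_{\lambda/\kappa}$, so the coefficient $c_{T,\lambda,\kappa} = \binon{\lambda}{\kappa}_{[0]} v^{|\lambda|-|\kappa|}$ in the recursive definition of $R_{T,\lambda}^{*(n)}$ matches the Macdonald branching coefficient exactly.

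Thus the two families satisfy identical branching rules with identical initial conditions, so they coincide. The only nontrivial ingredient is the identification of $\binon{\lambda}{\mu}_{[0]}$ with $\psi_{\lambda/\mu}$, which was already dispatched in Lemma \ref{lembinonpsi} by rewriting $\psi_{\lambda/\mu}$ in terms of arm and leg data of boxes in $\lambda - C_{\lambda/\mu}$ (since $b_\lambda(s) = b_\mu(s)$ off the columns of $\lambda/\mu$), and observing that the resulting product is precisely our $\binon{\lambda}{\mu}_{[0]}$. No obstacle remains; this is essentially a bookkeeping verification.
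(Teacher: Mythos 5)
Your proof is correct and is essentially the same as the paper's: both argue by induction on $n$ via the branching rule, identifying $c_{T,\lambda,\kappa}=\binon{\lambda}{\kappa}_{[0]}v^{|\lambda|-|\kappa|}$ with the Macdonald branching coefficient $\psi_{\lambda/\kappa}v^{|\lambda|-|\kappa|}$ through Lemma \ref{lembinonpsi}. (The paper cites (7.9)$'$ and (7.14)$'$ of \cite{Macdonald} rather than VI.7.13$'$, but this is only a bookkeeping difference in the reference.)
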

\begin{proof}
The Macdonald polynomials can also be defined using a branching rule, indeed using \cite[(7.9)' and (7.14)']{Macdonald} we see that the coefficients in the branching rule for the Macdonald polynomials are exactly $\psi_{\lambda/\mu}$. By Lemma \ref{lembinonpsi} those are the coefficients in the branching rule for $R_{T,\lambda}$. The result now follows using induction.
\end{proof}
As a corollary we obtain the following important result.
\begin{corollary}\label{cornonzero}
The families of interpolation functions $\{R_{\epsilon,\lambda}^{*(n)}\}_{\lambda\in P_n}$ for $\epsilon$ equal to one of 
V, E1, E2, E3, E4, F1, F2, F3, F4, T form a basis for the symmetric functions in $z_i$. 
\end{corollary}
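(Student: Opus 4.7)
The plan is to propagate the linear independence of the Macdonald polynomials $P_\lambda = R_{T,\lambda}^{*(n)}$ (Proposition~\ref{propmcdpoly}) to each of the other nine families $R_{\epsilon,\lambda}^{*(n)}$ via the iterated limit property (Proposition~\ref{propiteratedlim}). The case $\epsilon = T$ is immediate from the classical fact that Macdonald polynomials are a basis of the symmetric polynomials in $z_1,\ldots,z_n$; my task is to transfer this to the other families.

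The key step is the combinatorial observation that each polytope $P_\epsilon$ in our list lies in the tessellation's ``attracting basin'' of a tetrahedron: the paper already notes that every polytope not sitting in the interior of an octahedron admits a chain of boundary inclusions ending in the interior of a tetrahedron. Applying Proposition~\ref{propiteratedlim} along such a chain, starting from $R_{\epsilon,\lambda}^{*(n)}$, produces an explicit direction $(u_\zeta, u_\alpha, u_\beta)$ and a scalar $S_\lambda(q,t,a,b)\neq 0$ such that
\[
lc\bigl(R_{\epsilon,\lambda}^{*(n)}(z_i p^{u_\zeta};\, a p^{u_\alpha},\, b p^{u_\beta};\, q, t)\bigr) \;=\; S_\lambda \cdot R_{T,\lambda}^{*(n)}(z_i^{\pm 1};\, q, t),
\]
where the $\pm 1$ accounts for a possible $\zeta\to-\zeta$ reflection, equivalent by the paper's $z_i \leftrightarrow z_i^{-1}$ symmetry to applying that involution to the Macdonald polynomial. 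The scalar $S_\lambda$ is read off by combining the explicit normalization prefactors appearing in the propositions stating the various $R_\epsilon$-limits; it is a product of a monomial in $a,b,q,t$ with nonvanishing $\tilde C$-symbols.

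Given this scalar reduction, linear independence proceeds as follows. Suppose $\sum_\lambda c_\lambda R_{\epsilon,\lambda}^{*(n)} = 0$ is a finite nontrivial relation. Substituting the $p$-shifts preserves it as an identity in $M$, and each summand carries a $\lambda$-dependent $p$-valuation. Isolating the $\lambda$'s of minimum valuation in the support and taking the leading coefficient converts the identity into $\sum c_\lambda S_\lambda R_{T,\lambda}^{*(n)}(z_i^{\pm 1}) = 0$ over those $\lambda$. Linear independence of the Macdonald polynomials (in $z_i$ or $z_i^{-1}$) together with $S_\lambda \neq 0$ forces the corresponding $c_\lambda$ to vanish; iterating through the successive valuation strata kills all $c_\lambda$, giving the contradiction. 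The spanning half follows by a standard dimension count against the appropriate ambient symmetric-function space in each case.

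The main obstacle I expect is the case-by-case bookkeeping of the adjacency and of the scalar $S_\lambda$. For the straightforward cases ($V$, $E_1$, $E_2$, $E_4$, $F_1$, $F_3$), the polytope $P_\epsilon$ is directly a boundary face of $T$ and one step of Proposition~\ref{propiteratedlim} suffices; for the remaining cases one either uses the $\zeta$-reflection of $T$ or composes two boundary inclusions (through a face shared with an octahedron) to land inside a tetrahedron. In every case the nonvanishing of $S_\lambda$ at generic parameters reduces to the nonvanishing of $\tilde C$-symbols like $\tilde C_\lambda^-(t)$ and $\tilde C_\lambda^0(t^n)$ together with monomials in $a,b,q,t$, so genericity handles it uniformly and no deeper combinatorial identities are required.
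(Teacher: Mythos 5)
Your proposal matches the paper's proof, which simply cites the identification $R_{T,\lambda}^{*(n)}=P_\lambda$ (Macdonald polynomials) and the fact that the other families degenerate to the $T$-family by taking further limits, hence inherit linear independence. You spell out the valuation-stratum argument and the role of the iterated-limit property that the paper leaves implicit; the only imprecision is some of the adjacency bookkeeping (e.g.\ $E3$ is not a face of $T$ itself but of a different tetrahedron of the tessellation, and ``composing two boundary inclusions'' is not the right mechanism since the iterated limit goes from a face directly into an adjacent open cell), but this does not affect the validity of the argument.
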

\begin{proof}
The family $\{R_{T,\lambda}^{*(n)}\}_{\lambda\in P_n}$ forms such a family as they are identical to Macdonald's interpolation functions;
the other families all degenerate to this family by taking the right limits, so they must also form such bases.
\end{proof}

\subsection{Octahedron}
The interior of the octahedron consists of two square pyramids and the square separating these pyramids. 
This gives us three limits, however all of them are independent of $z_i$. As functions in $z_i$ they are thus not interesting, however we can give a explicit expression for the limiting constant, which means we obtain some nontrivial combinatorial equations. 

Let us label the different polytopes by
\begin{itemize}
\item[S] The square with vertices $(0,0,0)$, $(\frac12,\frac12,\frac12)$, $(1,1,0)$ and $(\frac12,\frac12,-\frac12)$;
\item[P1] The pyramid with base S and apex $(0,1,0)$;
\item[P2] The pyramid with base S and apex $(1,0,0)$.
\end{itemize}

\begin{definition}
We define the coefficients
\begin{align*}
c_{S,\lambda,\kappa}&= \binon{\lambda}{\kappa}_{[t^n \frac{a}{b}]} t^{-|\kappa|}, &
c_{P1,\lambda,\kappa}&= \binon{\lambda}{\kappa}_{[0]} t^{|\kappa|}, & 
c_{P2,\lambda,\kappa}&= \binon{\lambda}{\kappa}_{[0]} t^{-|\kappa|}.
\end{align*}
\end{definition}

\begin{proposition}
We have the following evaluations
\begin{align*}
R_{S,\lambda}(a,b;q,t) &= \frac{\tilde C_{\lambda}^0(t^n)}{\tilde C_{\lambda}^-(t) \tilde C_{\lambda}^+(t^n \frac{a}{b}) \tilde C_{\lambda}^0(\frac{aq}{bt})}
t^{n(\lambda)} t^{-(n-1)|\lambda|}, \\
R_{P1,\lambda}(q,t) & = 
\frac{\tilde C_{\lambda}^0(t^n)}{\tilde C_{\lambda}^-(t)} t^{n(\lambda)}, \qquad \qquad 
R_{P2,\lambda}(q,t) =
\frac{\tilde C_{\lambda}^0(t^n)}{\tilde C_{\lambda}^-(t)} t^{n(\lambda)} t^{-(n-1)|\lambda|}
\end{align*}
Moreover we have for $\alpha,\beta,\gamma$ with $|\alpha| +|\beta|+|\gamma|<1$ the limit
\begin{align*}
val(R_{\lambda}^{*(n)}(z_i p^{\beta/2}; a p^{1/2 + \alpha/2+\gamma/2}, b p^{1/2+\alpha/2-\gamma/2};q,t;p))&=0, \\
lc(R_{\lambda}^{*(n)}(z_i p^{\beta/2}; a p^{1/2 + \alpha/2+\gamma/2}, b p^{1/2+\alpha/2-\gamma/2};q,t;p))&=1.
\end{align*}

\end{proposition}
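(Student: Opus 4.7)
The plan is to first establish the explicit values of the three constants $R_{S,\lambda}$, $R_{P1,\lambda}$, $R_{P2,\lambda}$, and then derive the valuation/leading-coefficient claim about $R_\lambda^{*(n)}$ in the interior of the octahedron as a consequence of the same branching-rule analysis used for the V, E, F, T regions.

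For $R_{P1,\lambda}$ and $R_{P2,\lambda}$, the coefficients $c_{P1,\lambda,\kappa}=\binon{\lambda}{\kappa}_{[0]}\,t^{|\kappa|}$ and $c_{P2,\lambda,\kappa}=\binon{\lambda}{\kappa}_{[0]}\,t^{-|\kappa|}$ are manifestly independent of $v$, so iterating the branching rule produces constants depending only on $\lambda$, $q$, $t$, $n$. By Lemma~\ref{lembinonpsi} the quantity $\binon{\lambda}{\kappa}_{[0]}$ is the Macdonald branching coefficient $\psi_{\lambda/\kappa}$, so making the ansatz $R_{P1,\lambda}^{(n)}=t^{f(n)|\lambda|}P_\lambda(t^{g(1)},\ldots,t^{g(n)};q,t)$ and comparing with the Macdonald branching rule $P_\lambda(x_1,\ldots,x_{n+1})=\sum_\kappa \psi_{\lambda/\kappa}\,x_{n+1}^{|\lambda|-|\kappa|}P_\kappa(x_1,\ldots,x_n)$ forces $g(k+1)=g(k)-1$, and one checks that $R_{P1,\lambda}^{(n)}=P_\lambda(1,t,\ldots,t^{n-1};q,t)$. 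The claimed closed form then follows from Macdonald's principal specialization formula \cite{Macdonald}. The $R_{P2}$ argument is identical up to sign and yields $R_{P2,\lambda}^{(n)}=P_\lambda(t^{-(n-1)},\ldots,1;q,t)=t^{-(n-1)|\lambda|}R_{P1,\lambda}^{(n)}$ via homogeneity of $P_\lambda$.

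For $R_{S,\lambda}$ the recursion $R_{S,\lambda}^{(n+1)}=\sum_{\kappa\prec'\lambda}\binon{\lambda}{\kappa}_{[t^n a/b];q,t}\,t^{-|\kappa|}\,R_{S,\kappa}^{(n)}$ still gives a constant. I would induct on $n$, plug in the conjectured product formula, and verify that the resulting identity telescopes using the explicit factorization of $\binon{\lambda}{\kappa}_{[c]}$ according to whether $\lambda_j'=\mu_j'$ or not. A cleaner alternative is to obtain $R_{S,\lambda}$ as a further limit of $R_{E1,\lambda}$ (or $R_{E2,\lambda}$) via Proposition~\ref{propiteratedlim}: the square $S$ lies in the closure of the region associated with $E1$, so a one-parameter limit of the $R_{E1,\lambda}$ formula reads off the constant. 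Finally, the valuation and leading coefficient claim for parameters in the octahedron interior follows from the general branching-rule analysis parallel to the V, E, F, T cases: this analysis expresses $lc(R_\lambda^{*(n)}(\cdot))$ as a prefactor in $\tilde C$'s and monomials in $a,b,q,t$ times whichever of $R_{S,\lambda}$, $R_{P1,\lambda}$, $R_{P2,\lambda}$ corresponds to the sub-polytope containing $(\tfrac12+\tfrac{\alpha+\gamma}{2},\,\tfrac12+\tfrac{\alpha-\gamma}{2},\,\tfrac{\beta}{2})$. Substituting the explicit formulas just proved, the prefactor cancels the $R_\epsilon$-constant exactly in all three sub-regions, yielding valuation $0$ and leading coefficient $1$.

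The main obstacle is the telescoping identity for $R_{S,\lambda}$. The summand $\binon{\lambda}{\kappa}_{[t^n a/b]}\,t^{-|\kappa|}\,R_{S,\kappa}^{(n)}$ is more intricate than its $R_{P1}$ counterpart because the $C_\lambda^+(t^n a/b)$ factor in the target creates cross-terms between boxes with $\lambda_j'=\mu_j'$ and $\lambda_j'\neq\mu_j'$ that were absent from the Macdonald principal specialization. Bypassing this via the edge-limit route through Proposition~\ref{propiteratedlim} avoids the combinatorial identity but shifts the work into careful bookkeeping of $p$-prefactors through a multi-step limit.
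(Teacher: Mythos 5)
Your derivation of $R_{P1,\lambda}$ and $R_{P2,\lambda}$ via the Macdonald branching rule and the principal specialization formula is correct, and the paper itself makes exactly this observation in the remark following the proposition. However, for $R_{S,\lambda}$ there is a genuine gap, and it propagates into the final $val=0$, $lc=1$ claim.

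You flag the telescoping for $R_{S,\lambda}$ as the hard part but do not carry it out, and your proposed shortcut does not close the loop. Proposition~\ref{propiteratedlim} does let you pass from the edge $E1$ to the square $S$, but $R_{E1,\lambda}^{*(n)}(z_i;a,b)$ is a $z$-dependent function with no closed form: taking a further $p$-limit produces the constant $R_{S,\lambda}$ only abstractly, and you still need some way to evaluate it. Similarly, the final step of your argument asserts that ``the prefactor cancels the $R_\epsilon$-constant exactly,'' but the prefactor is precisely what the branching-rule iteration accumulates, and computing it is equivalent to the telescoping you are trying to avoid.

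The single observation that makes the paper's proof short, and which is absent from your proposal, is Proposition~\ref{propinterpoleval}: the exact evaluation $R_\lambda^{*(n)}(xt^{n-i};a,b)=\Delta_\lambda^0(t^{n-1}a/b\mid t^{n-1}ax,a/x)$. Substituting the octahedron scaling into the right-hand side shows immediately that the valuation is $0$ and the leading coefficient is $1$ at these special points; since the branching-rule analysis shows the leading coefficient is $z$-independent throughout the octahedron, it must equal $1$ identically. The closed forms for $R_{S,\lambda}$, $R_{P1,\lambda}$, $R_{P2,\lambda}$ then drop out by comparing with the prefactor, rather than being an input. This reverses the logical order of your argument and treats all three sub-polytopes uniformly, eliminating both the telescoping identity and the detour through the edge limits.
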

\begin{proof}
First we obtain an expression for the limits of the interpolation functions using the branching rule as before. The case $\gamma=0$ corresponds to the square, while $\gamma<0$ corresponds to $P1$ and $\gamma>0$ to $P2$. This gives us the valuations of the interpolation functions in these cases, and expressions for the leading coefficient in, respectively, $R_{S,\lambda}$, $R_{P1,\lambda}$ and $R_{P2,\lambda}$. In particular we see that the leading coefficient is $z$-independent. But we also know the evaluation (Proposition \ref{propinterpoleval})  for general interpolation functions 
\[
R_{\lambda}^{*(n)}(xt^{n-i};a,b) = \Delta_{\lambda}^0(\frac{t^{n-1}a}{b}~|~ t^{n-1}ax, \frac{a}{x}).
\]
Plugging in $a\to ap^{1/2 + \alpha/2+\gamma/2}$, $b\to b p^{1/2+\alpha/2-\gamma/2}$, and $x\to x p^{\beta/2}$ in here, shows that the 
valuation on the right hand side is 0 and the leading coefficient 1. Thus we get another expression for the leading coefficient for special values of $z_i$. However, as the leading coefficient is (by inspection) independent of the $z_i$, it must be equal to this constant. The resulting equality gives us the evaluations of $R_{S,\lambda}$, $R_{P1,\lambda}$ and $R_{P2,\lambda}$.
\end{proof}


The value $R_{P1,\lambda}$ is equal to a special value of the Macdonald polynomials. Indeed, given a chain $0=\lambda^{(0)} \prec' \lambda^{(1)} \prec' \cdots \prec' \lambda^{(n)}=\lambda$, the coefficient of $\prod_i z_i^{|\lambda^{(i)}/\lambda^{(i-1)}|}$ in the Macdonald polynomial $P_{\lambda}$  equals 
$\prod_i \binon{\lambda^{(i)}}{\lambda^{(i-1)}}_{[0]}$. In $R_{P1}$ this product of binomial coefficients appears with a factor 
\[
\prod_i t^{|\lambda^{(i-1)}|} = \prod_i t^{\sum_{k=1}^{i-1} |\lambda^{(k)}/\lambda^{(k-1)}|} = \prod_k t^{(n-k)|\lambda^{(k)}/\lambda^{(k-1)}|}.
\]
Thus we obtain that $R_{P1,\lambda}=R_{T,\lambda}(t^{n-1}, t^{n-2},\ldots,1)$, and its evaluation thus corresponds to the famous principal evaluation of Macdonald polynomials. Likewise the evaluation of $R_{P2,\lambda}$ also corresponds to this same evaluation of Macdonald polynomials.

\subsection{Binomial coefficients}
Recall that the binomial coefficients were defined in terms of the interpolation functions. Obtaining the limits of 
the binomial coefficients is simply a question of plugging in the corresponding limits of the interpolation functions. These limits were all already discussed in \cite[Section 8]{RainsBCn}.

Proposition \ref{propbinomp} (which says the binomial coefficients are $p$-elliptic in $a$ and $b$) tells us that we only need consider limits for $\binom{\lambda}{\mu}_{[ap^{\alpha},bp^{\beta}];q,t;p}$ 
for $\alpha,\beta$ modulo the lattice $\mathbb{Z}^2$.
Notice that the binomial coefficients only use the interpolation functions with $\alpha=\zeta$, that is, in the plane at the front of Figure \ref{fig1}. The relevant picture of the different limits is given in Figure \ref{fig2}. The second labeling is included as it corresponds to the one in Figure \ref{fig1}; notice that we only have the front of the parallelepiped and reflected it left-right.

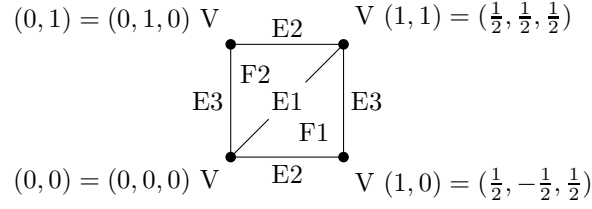
\begin{figure}

\begin{tikzpicture}[scale=0.3,vol/.style={circle,fill=black,minimum size=4pt,inner sep=0pt,
            outer sep=-1pt},label distance=0mm]

\node[label=below left:{$(0,0)=(0,0,0)$ V}] (lo) at (0,0) [vol] {};
\node[label=below right:{V $(1,0)=(\frac12,-\frac12,\frac12)$}] (ro) at (5,0) [vol] {};
\node[label=above left:{$(0,1)=(0,1,0)$ V}] (lb) at (0,5) [vol] {};
\node[label=above right:{V $(1,1)=(\frac12,\frac12,\frac12)$}] (rb) at (5,5) [vol] {};

\begin{scope}[label distance =-1.5mm]
\node[label=below:{E2}] (o) at (2.5,0) {}; 
\node[label=above:{E2}] (b) at (2.5,5) {}; 
\node[label=left:{E3}] (l) at (0,2.5) {}; 
\node[label=right:{E3}] (r) at (5,2.5) {}; 
\end{scope}
\node (m) at (2.5,2.5) {E1}; 
\node (f1) at (3.7,1.1) {F1};
\node (f2) at (1.1,3.7) {F2};

\draw (lo) -- (ro) -- (rb) -- (lb) -- (lo) -- (m) -- (rb) ;

\end{tikzpicture}

\caption{The different limits for the binomial coefficients. Points are labeled by $(\alpha,\beta) =(\alpha/2,\beta-\alpha/2,\alpha/2)$}\label{fig2}
\end{figure}

\begin{definition}
We define
\begin{align*}
\binom{\lambda}{\mu}_{V,[a,b];q,t} & := 
q^{3n(\mu')} t^{-4n(\mu)} \left( -\frac{q^3 t^{n-1}a^2}{b^2}\right)^{|\mu|}
\tilde \Delta_{\mu}^{(n)}(\frac{a}{b}) 
 \frac{\tilde C_{\mu}^0(\frac{1}{b},\frac{t^{-n}a q}{b}) \tilde C_{\mu}^-(t) \tilde C_{\mu}^+(\frac{a}{b})}
{\tilde C_{\mu}^0(aq,t^n)} \\ & \qquad \times 
R_{V,\mu}^{*(n)}(\sqrt{a} q^{\lambda_i}t^{1-i};t^{1-n}\sqrt{a},\frac{b}{\sqrt{a}})
\\
\binom{\lambda}{\mu}_{E1,[x];q,t} &:=  \left(-q^2 t^{n-1}x\right)^{|\mu|} q^{3n(\mu')} t^{-4n(\mu)}
\tilde \Delta_{\mu}^{(n)}(x)
\frac{\tilde C_{\mu}^-(t) \tilde C_{\mu}^+(x) \tilde C_{\mu}^0(t^{-n}qx)}{\tilde C_{\mu}^0(t^n)}
 R_{E1,\mu}^{*(n)} (q^{\lambda_i} t^{1-i};t^{1-n}, \frac{1}{x}) 
 \\
\binom{\lambda}{\mu}_{E2,[b];q,t} &:= q^{|\mu|} 
t^{n(\mu)} \frac{\tilde C_{\mu}^0(\frac{1}{b}) }{\tilde C_{\mu}^-(q)}  R_{E2,\mu}^{*(n)}(q^{\lambda_i} t^{1-i};t^{1-n}, b) 
 \\
\binom{\lambda}{\mu}_{E3,[a];q,t} &:= q^{n(\mu')} (-q\sqrt{a})^{|\mu|} \frac{1}{\tilde C_{\mu}^-(q) \tilde C_{\mu}^0(aq)}
R_{E3,\lambda}^{*(n)}(\sqrt{a} q^{\lambda_i} t^{1-i};t^{1-n}\sqrt{a})
\\
\binom{\lambda}{\mu}_{F1;q,t} &:=  
\frac{ t^{n(\mu)}}{\tilde C_{\mu}^-(q)}
R_{F1,\lambda}^{*(n)}( q^{\lambda_i} t^{1-i};t^{1-n})
\\
\binom{\lambda}{\mu}_{F2;q,t} &:= 
\frac{q^{n(\mu')} (-q)^{|\mu|}}{\tilde C_{\mu}^-(q)} 
R_{F2,\mu}^{*(n)}(\frac{1}{ q^{\lambda_i} t^{1-i}};t^{1-n}) 
\end{align*}
for $n> l(\lambda), l(\mu)$.
\end{definition}

\begin{proposition}
We have the following limits for $0<\alpha<\beta<1$
\begin{align*}
\lim_{p\to 0} \binom{\lambda}{\mu}_{[a,b];q,t;p} &= \binom{\lambda}{\mu}_{V,[a,b];q,t} &
\lim_{p\to 0} \binom{\lambda}{\mu}_{[ap^{\alpha},bp^{\alpha}];q,t;p} &= \binom{\lambda}{\mu}_{E1,[a/b];q,t} \\
\lim_{p\to 0} \binom{\lambda}{\mu}_{[ap^{\alpha},b];q,t;p} &= \binom{\lambda}{\mu}_{E2,[b];q,t} &
\lim_{p\to 0} \binom{\lambda}{\mu}_{[a,bp^{\alpha}];q,t;p} &= \binom{\lambda}{\mu}_{E3,[a];q,t} \\
\lim_{p\to 0} \binom{\lambda}{\mu}_{[ap^{\beta},bp^{\alpha}];q,t;p} &= \binom{\lambda}{\mu}_{F1;q,t} &
\lim_{p\to 0} \binom{\lambda}{\mu}_{[ap^{\alpha},bp^{\beta}];q,t;p} &= \binom{\lambda}{\mu}_{F2;q,t} 
\end{align*}
\end{proposition}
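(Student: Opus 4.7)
The starting point is Definition \ref{defbinom}, which writes
\[
\binom{\lambda}{\mu}_{[a,b];q,t;p} = \Delta_\mu\!\left(\tfrac{a}{b}\;\big|\;t^n,\tfrac{1}{b};q,t;p\right)\, R_\mu^{*(n)}\!\left(\sqrt{a}\,q^{\lambda_i}t^{1-i};\,t^{1-n}\sqrt{a},\,\tfrac{b}{\sqrt{a}};q,t;p\right)
\]
as a product of a $\Delta$-weight and an interpolation function. The plan is to take the $p\to 0$ limit of each factor separately, using the factorization
\[
\Delta_\mu\!\left(\tfrac{a}{b}\,\big|\,t^n,\tfrac{1}{b}\right) = \Delta_\mu\!\left(\tfrac{a}{b}\,\big|\,t^n\right)\cdot \frac{C_\mu^0(1/b)}{C_\mu^0(pqa)}
\]
to reduce the $\Delta$-weight to pieces whose valuation and leading coefficient are tabulated in Section \ref{secnot} (together with the shifting formula \eqref{eqc0p} applied to the explicit $p$ in $pqa$). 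Under the substitution $a\to ap^\alpha$, $b\to bp^\beta$, the arguments of the interpolation function acquire scalings $(\alpha_{\mathrm{int}},\beta_{\mathrm{int}},\zeta_{\mathrm{int}}) = (\alpha/2,\beta-\alpha/2,\alpha/2)$, so $R_\mu^{*(n)}$ is being evaluated in the plane $\alpha_{\mathrm{int}}=\zeta_{\mathrm{int}}$ of Figure \ref{fig1}. The six cells V, E1, E2, E3, F1, F2 of the proposition correspond exactly to the cells into which the hyperplanes of Proposition \ref{propliminterpolation}(3) cut this plane, as shown in Figure \ref{fig2}, so the appropriate limit of $R_\mu^{*(n)}$ is already provided in each case by the corresponding proposition earlier in this appendix.

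For each of the six cases I would then plug the scaled parameters into the factored expression, read off the leading coefficient of each piece, verify the total valuation is zero (consistent with Proposition \ref{propbinomp}), and multiply and simplify. For case V ($\alpha=\beta=0$) this is most transparent: \eqref{eqc0p} converts $C_\mu^0(pqa)$ to $\tilde C_\mu^0(qa)\cdot(-1/(qa))^{|\mu|}q^{-n(\mu')}t^{n(\mu)}$, the factor $\Delta_\mu(a/b|t^n)$ limits to $\tilde\Delta_\mu^{(n)}(a/b)$, and the V-limit of $R_\mu^{*(n)}$ contributes $\tilde C^-_\mu(t)\tilde C^+_\mu(a/b)\tilde C^0_\mu(aq/(bt^n))/\tilde C^0_\mu(t^n)$ together with an explicit monomial prefactor; collecting terms reproduces the formula for $\binom{\lambda}{\mu}_{V,[a,b];q,t}$ on the nose. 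The remaining cases follow the same recipe, with cell-specific prefactors drawn from the appropriate interpolation limit.

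The main obstacle is not conceptual but combinatorial: one must track, across each cell, all the monomial prefactors of the form $(-x)^{|\mu|}q^{an(\mu')}t^{bn(\mu)}$ arising from \eqref{eqc0p}--\eqref{eqcpp} applied to arguments with non-integer or boundary $p$-scale, and from the prefactors already present in the interpolation function leading coefficients. A practical tactic to tame this is to verify V first and then obtain each other case as the V-answer multiplied by the ratio of the corresponding interpolation-function leading coefficients, which localizes the bookkeeping to the difference between adjacent cells and lets the bulky V-prefactors cancel in the comparison.
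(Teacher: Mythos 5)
Your proposal matches the paper's proof almost exactly: the paper's argument is precisely to substitute the already-computed interpolation-function limits into Definition \ref{defbinom} and simplify, using the coordinate correspondence $(\alpha,\beta) \mapsto (\alpha/2, \beta-\alpha/2, \alpha/2)$ into the plane $\alpha_{\mathrm{int}}=\zeta_{\mathrm{int}}$ of Figure \ref{fig1} (with Figure \ref{fig2} exhibiting the six resulting cells). Your V-case computation is verified and reproduces the stated formula; the one reference you should swap is that the zero valuation of the binomial coefficients is Proposition \ref{propliminterpolation}(5), not Proposition \ref{propbinomp} (though the latter is what underlies it), and for the F2 case you should note explicitly that the interpolation argument must be inverted ($z_i \mapsto 1/z_i$) because the relevant cell is the reflection of F2 under $\zeta\to-\zeta$ — which is why $R_{F2,\mu}^{*(n)}$ is evaluated at $1/(q^{\lambda_i}t^{1-i})$ in the paper's formula.
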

\begin{proof}
We plug in the limits of this section in Definition \ref{defbinom}. We simplify the results a bit (by using invariances of the form 
\eqref{eqE1inv}) to indicate that, for example, $\binom{\lambda}{\mu}_{E1,[a/b];q,t}$ only depends on $a/b$ (as opposed to $a$ and $b$).
\end{proof}

\end{document}